\numberwithin{equation}{section}
\newtheorem{theorem}{Theorem}[section]
\newtheorem{lemma}{Lemma}[section]
\newtheorem{assumption}{Assumption}[section]
\newtheorem{proposition}{Proposition}[section]
\newtheorem{example}{Example}[section]
\newtheorem{corollary}{Corollary}[section]
\newcommand{\dd}{\text{d}}
\newcommand{\mylabel}[2]{#2\def\@currentlabel{#2}\label{#1}}
\begin{document}
\begin{frontmatter}

%% Title, authors, and addresses
\title{Strong error analysis and first-order convergence of Milstein-type schemes for McKean-Vlasov SDEs with superlinear coefficients}

\author{Jingtao Zhu}
\ead{bingbll.zhu@gmail.com}

\author{Yuying Zhao\corref{cor1}}
\ead{zhaoyuying78@gmail.com}

\author{Siqing Gan\corref{cor1}}
\ead{sqgan@csu.edu.cn}

\cortext[cor1]{Corresponding authors}
\address{School of Mathematics and Statistics, HNP-LAMA, Central South University, Changsha 410083, China}
% \affiliation{organization={School of Mathematics and Statistics, HNP-LAMA, Central South University},
%              city={Changsha 410083},
%              %postcode={410083},
%              country={China}}

%% Abstract
\begin{abstract}
In the study of McKean-Vlasov stochastic differential equations (MV-SDEs), numerical approximation plays a crucial role in understanding the behavior of interacting particle systems (IPS). Classical Milstein schemes provide strong convergence of order one under globally Lipschitz coefficients. Nevertheless, many MV-SDEs arising from applications possess super-linearly growing drift and diffusion terms, where classical methods may diverge and particle corruption can occur. In the present work, we aim to fill this gap by developing a unified class of Milstein-type discretizations handling both super-linear drift and diffusion coefficients.
The proposed framework includes the tamed-, tanh-, and sine-Milstein methods as special cases and establishes order-one strong convergence for the associated interacting particle system under mild regularity assumptions, requiring only once differentiable coefficients.
In particular, our results complement Chen et al. (Electron. J. Probab., 2025), where a taming-based Euler scheme was only tested numerically without theoretical guarantees, by providing a rigorous convergence theory within a broader Milstein-type framework. The analysis relies on discrete-time arguments and binomial-type expansions, avoiding the continuous-time \text{It\^o} approach that is standard in the literature. Numerical experiments are presented to illustrate the convergence behavior
and support the theoretical findings.
\end{abstract}

%%Graphical abstract
% \begin{graphicalabstract}
% %\includegraphics{grabs}
% \end{graphicalabstract}

%%Research highlights
% \begin{highlights}
% \item Research highlight 1
% \item Research highlight 2
% \end{highlights}

%% Keywords
\begin{keyword}
%% keywords here, in the form: keyword \sep keyword
%% MSC codes here, in the form: \MSC code \sep code
%% or \MSC[2008] code \sep code (2000 is the default)
 McKean-Vlasov SDEs \sep superlinear growth  \sep Milstein-type schemes \sep rate of strong convergence 
\MSC[2020] 65C30 \sep 60H35 

\end{keyword}

\end{frontmatter}

%% Add \usepackage{lineno} before \begin{document} and uncomment 
%% following line to enable line numbers
%\linenumbers
%% main text
%\tableofcontents
\section{Introduction}
As a class of important mathematical models,
McKean-Vlasov stochastic differential equations (MV-SDEs), also known as distribution-dependent SDEs, play a central role in modeling interacting particle systems (IPS) and have been extensively studied in fields such as statistical physics \cite{sznitman1991topics, braun1977vlasov}, mathematical finance and mean-field games \cite{lasry2007mean, carmona2018probabilistic}, control theory \cite{bensoussan2013mean}, and neuroscience \cite{baladron2012mean}. In general, closed-form solutions to MV-SDEs are not available and the development and analysis of numerical methods for their simulation are of significant practical interest.
%More recently, MV-SDEs have attracted renewed interest in machine learning. In particular, mean-field Langevin dynamics (MFLD), a special case of MV-SDEs, have been used to analyze the training behavior of overparameterized neural networks and the convergence properties of stochastic sampling methods in high dimensions \cite{mei2018mean, chizat2018global}.
In this paper, we concern the design and convergence analysis of numerical approximations for the following MV-SDEs:
\begin{equation}
\label{eq:MV_SDE}
\dd X_t = f(X_t, \mathscr{L}_{X_t})\, \dd t + g(X_t, \mathscr{L}_{X_t})\, \dd W_t, \quad t \in [0, \mathcal{T}],
\end{equation}
where $\mathscr{L}_{X_{t}}$ denotes the law of $X_{t}$, $f \colon \mathbb{M} \to \mathbb{R}^d$ and $g \colon \mathbb{M} \to \mathbb{R}^{d \times m}$, with $\mathbb{M} := \mathbb{R}^d \times \mathscr{P}_2(\mathbb{R}^d)$ as defined in Section \ref{sec:notations}. The process $\{W_t\}_{0 \leq t \leq \mathcal{T}}$ is an $m$-dimensional standard Brownian motion  on a complete probability space $(\Omega, \mathcal{F}, \{\mathcal{F}_t\}_{0 \leq t \leq \mathcal{T}}, \mathbb{P})$ with a filtration $\{\mathcal{ F}_t\}_{t \in [0,\mathcal{T}]}$.

As the law of $X_t$ is known, the underlying MV-SDEs reduce to the usual SDEs, numerical methods for which have been extensively studied for the past decades (consult monographs \cite{Milstein2004stochastic,hutzenthaler2015numerical,wang2020mean,li2019explicit} and references therein), in the context of both numerical convergence and numerical stability. When the law of $X_t$ is unknown, a widely used approach is to approximate the solution via an IPS and then simulate the particle system using time discretization schemes. Under suitable assumptions, the principle of propagation of chaos (PoC) guarantees that the empirical distribution of the particle system converges to the law of the solution as the number of particles tends to infinity \cite{sznitman1991topics}, the time discretization of the IPS becomes a powerful tool to understand the behavior of the underlying problems. To analyze time discretization approximations of the IPS, a global Lipschitz condition is often imposed on the coefficient functions of MV-SDEs \cite{li2022strong, bao2022approximations}.
Nevertheless, MV-SDEs arising from applications rarely obey such a traditional but restrictive condition.
Notable examples of MV-SDEs with non-globally Lipschitz continuous coefficients include numerous models, such as the double-well dynamics (see \eqref{exam:num-double-well model}), the Cucker-Smale flocking model (see \eqref{eq:num-exam-Cuck-Sma-model} or \cite{erban2016cucker,chen2022flexible}), and the FitzHugh-Nagumo model (see Example \ref{exam:FHN-model-numerical} or \cite{baladron2012mean}).
Evidently, the coefficients of these three models violate the globally Lipschitz condition.
It is well established that even for SDEs \cite{hutzenthaler2011strong, mattingly2002ergodicity} with superlinearly growing coefficients, classical schemes such as Euler-Maruyama and Milstein methods may diverge. In the case of MV-SDEs, analogous divergence phenomena commonly referred to as ``particle corruption" have also been observed and analyzed \cite{goncalo2021simulation}. Specifically, a single particle may become excessively influential over all others, thereby destroying the weakly interacting particle structure underlying the MV-SDE system. 

To address this, a growing body of work has focused on the construction and analysis of convergent Euler-type schemes in non-globally Lipschitz settings (see \cite{goncalo2021simulation,liu2023tamed,huagui2023tamed,chen2022flexible,chen2023euler,kumar2022well,yuanping2025explicit,guo2024convergence,chen2025wellposedness,jian2025modified,christoph2022adaptive,neelima2020well,khue2025infinite,biswas2025milstein,zhu2025euler} and references therein). 
Nevertheless, Euler-type schemes for MV-SDEs with multiplicative noise inherently achieve at most a strong convergence rate of order $\frac{1}{2}$, which often results in high computational costs when higher accuracy is required. 
In contrast, Milstein-type schemes offer order-one strong convergence and can be efficiently combined with multilevel Monte Carlo techniques to reduce computational complexity further \cite{giles2006improved,giles2008multilevel}. Recently, tamed Milstein methods have been successfully applied to MV-SDEs with superlinear drift and Lipschitz diffusion coefficients \cite{bao2021first, kumar2021explicit, bao2023milstein}.
A natural yet unresolved question thus arises as to whether such methods can be extended to settings where both the drift and diffusion coefficients exhibit superlinear growth. To fill this gap, we develop a general framework for a family of Milstein-type schemes designed to handle superlinear drift and diffusion coefficients and establish their strong convergence theory. Compared with \cite{bao2021first}, our analysis only requires once differentiability of the coefficients in the state and measure components.
Specifically, our scheme modifies the drift and diffusion coefficients through bounded operators that scale with a negative power of the time step size $\Delta t$, leading to the following general form:
\begin{align}\label{intro-eq:modified_Milstein_method}
   Y_{t_{k+1}}^{i, N} =&~ Y_{t_{k}}^{i,N} + \Gamma_{1}\!\left(f \!\left(Y_{t_{k}}^{i, N}, \rho_{t_{k}}^{Y,N} \right),\Delta t \right) \Delta t  
   + \sum_{j_{1}=1}^{m} \Gamma_{2}\!\left(g_{j_{1}} \!\left(Y_{t_{k}}^{i, N}, \rho_{t_{k}}^{Y, N} \right), \Delta t\right) \Delta W^{i,j_{1}}_{t_{k}}  \notag \\
  &~+\sum_{j_{1},j_{2}=1}^{m} \Gamma_{3}\!\left(\mathcal{L}_{y}^{j_{2}}~g_{j_{1}}\!\left(Y_{t_{k}}^{i,N}, \rho_{t_{k}}^{Y, N} \right),\Delta t\right) \!\int_{t_{k}}^{t_{k+1}} \!\!\int_{t_{k}}^{s} \mathrm{d} W_{r}^{i,j_{2}}  \mathrm{d} W^{i,j_{1}}_{s}  \\
  &~+\frac{1}{N}\sum_{j_{1},j_{2}=1}^{m} \sum_{k_{1}=1}^{N} \Gamma_{4}\!\left( \mathcal{L}_{\rho}^{j_{2}}~g_{j_{1}} \!\left(Y_{t_{k}}^{i,N},\rho_{t_{k}}^{Y,N},Y_{t_{k}}^{k_{1},N}\right) ,\Delta t\right)
  \!\int_{t_{k}}^{t_{k+1}}  \!\int_{t_{k}}^{s} \mathrm{d} W_{r}^{k_{1},j_{2}}\mathrm{d} W^{i,j_{1}}_{s}. \notag
\end{align}
Here, the operators $\Gamma_i(\cdot, \Delta t)$, $i=1,\dots,4$, are uniformly bounded and designed to control the superlinear growth of the coefficients through their dependence on $\Delta t$. Moreover, this framework encompasses several schemes as special cases, including the tamed, sine-, and tanh-type Milstein methods (see Examples \eqref{ex-eq:tanh-misltein}–\eqref{ex:tamed-euler-scheme}) and also recovers the taming-in Euler scheme in \cite{chen2025wellposedness} as a degenerate instance, where the scheme was
only tested numerically without theoretical guarantees. In particular, the tamed Milstein scheme considered in \cite{kumar2022well} for MV-SDEs with common noise under similar growth conditions is recovered as a specific example within our framework, corresponding to special choices of the operators $\Gamma_i$.

Indeed, when treating MV-SDEs with superlinearly growing coefficients by taming strategy, it becomes a standard way in the literature to work with continuous-time extensions of the schemes and carry out the analysis based on the \text{It\^o} formula (see, e.g. \cite{neelima2020well,kumar2022well,biswas2025milstein}). In this work, non-standard arguments are developed to overcome these difficulties in the analysis. For example, to obtain high-order moments of the schemes, we work directly with the discrete scheme \eqref{intro-eq:modified_Milstein_method} and rely on discrete arguments based on binomial expansion (see the proof of Lemma \ref{lem:moment-bounds-of-numerical-solution}). 

In summary, by generalizing the taming strategy through operator modifications, we develop a unified class of Milstein-type schemes handling both superlinear drift and diffusion coefficients and establish their order-one strong convergence.  
The main contributions are summarized as follows:
\begin{itemize}
\item 
A unified framework of Milstein-type discretizations is developed for the IPS associated with nonlinear MV-SDEs. The framework covers the tamed, sine-, and tanh-type Milstein methods (\eqref{ex-eq:tanh-misltein}–\eqref{ex:tamed-euler-scheme}) as special cases and also recovers the taming-in Euler scheme as a degenerate instance, thereby complementing \cite{chen2025wellposedness}, where the taming algorithm was only numerically implemented without theoretical guarantees.
\item 
Building on the proposed framework, we establish moment bounds and prove order-one strong $L^p$-convergence of the proposed Milstein-type schemes for the IPS associated with nonlinear MV-SDEs.
These results hold under mild conditions, requiring only that the modified operators $\Gamma$ are uniformly bounded and scale with a negative power of the time step $\Delta t$.
The analysis relies on discrete-time arguments and a mean value-type technique rather than the continuous-time \text{It\^o} approach, assuming only once differentiability of the coefficients in both the state and measure components.
\end{itemize}

To apply the theoretical results to a specific numerical scheme, it is sufficient to verify two main conditions. \textit{First}, given Assumptions \ref{ass:Gamma-control-conditions} and \ref{ass:Coefficient-comparison-conditions-of-f}, moments of the numerical solution are bounded. \textit{Second}, under Assumption \ref{ass:Coefficient-comparison-conditions-of-Gamma1-Gamma4} on the errors between the operators and the original functions, we obtain from Theorem \ref{thm:convergence-result-particle-scheme} that the proposed Milstein-type schemes converge with strong order one or smaller, depending on the choice of operators, particularly, concrete example verifications provided in Section \ref{subsec:examples_of_scheme}. For clarity, Table \ref{tab:parameter-overview} in \ref{appendix:summary-of-parameters} summarizes the key parameters involved and their roles in applying the theoretical framework.  

As already mentioned above, the analysis of both the boundedness and the strong convergence rate is highly non-trivial and essential difficulties are caused by the non-globally Lipschitz setting, where the diffusion coefficients are allowed to grow polynomially.

The remainder of this paper is structured as follows. We introduce key notations and assumptions in Section \ref{sec:ass-and-chao-propa}, where we also present the relevant PoC results.
In Section \ref{sec:Mil-sche-and-moment-bound}, we present the proposed Milstein-type schemes and derive uniform moment bounds under relaxed conditions.
Section \ref{sec:numeri-experi} provides numerical experiments that illustrate the theoretical results.
All auxiliary estimates used in the proof of Theorem \ref{thm:convergence-result-particle-scheme} are collected in Section \ref{sec:conver-analy}, and additional details are given in the Appendix. A brief conclusion closes the paper.

\section{Preliminaries and N-particle descretization} 
\label{sec:ass-and-chao-propa}
\subsection{Notation and assumptions}
\label{sec:notations}
We denote by $\langle \cdot, \cdot \rangle$ the standard inner product in $\mathbb{R}^d$ and by $|\cdot|$ the associated Euclidean norm. For a matrix $A \in \mathbb{R}^{d \times m}$, we define the (Frobenius) norm as
$|A| := \sqrt{\operatorname{trace}(A^T A)}$, where $A^T$ denotes the transpose of $A$. Throughout, we use $\mathbb{E}$ to denote expectation on a filtered probability space $(\Omega, \mathcal{F}, \{\mathcal{F}_t\}_{0 \leq t \leq \mathcal{T}}, \mathbb{P})$. For $q \ge 1$, let $\mathscr{P}_q(\mathbb{R}^d)$ denote the space of Borel probability measures on $\mathbb{R}^d$ with finite $q$-th moment, i.e.
$$
\mathscr{P}_q(\mathbb{R}^d):=\{\rho \in \mathscr{P}\left(\mathbb{R}^d\right)\colon \rho\left(|\cdot|^q\right)=\int_{\mathbb{R}^d}|x|^q \rho(\mathrm{d} x)<\infty\}.
$$
Based on this, we set $\mathbb{M} := \mathbb{R}^d \times \mathscr{P}_2(\mathbb{R}^d)$, $\bar{\mathbb{M}} := \mathbb{R}^d \times \mathscr{P}_2(\mathbb{R}^d) \times \mathbb{R}^d$. 

For $\rho, \bar{\rho} \in \mathscr{P}_q(\mathbb{R}^d)$, the $L^q$-Wasserstein distance is defined by
$$
\mathbb W_q(\rho, \bar{\rho}) = \inf_{\pi \in \Pi(\rho, \bar{\rho})} \left( \int_{\mathbb R^d \times \mathbb R^d}  |x - y|^q d\pi(x, y) \right)^{\frac{1}{q}},
$$
where $\Pi(\rho, \bar{\rho})$ is the collection of all probability measures on $\mathbb R^d \times \mathbb R^d$ with its marginals agreeing with $\rho$ and $\bar{\rho}$.
  
For convenience, we write $\mathcal{I}_N :=\{1, 2, \dots, N\}$. The notation $\delta_y$ denotes the Dirac measure at point $y \in \mathbb{R}^d$. We also use the notation $a_1 \vee b_1 := \max\{a_1, b_1\}$ and $a_1 \wedge b_1 = \min\{a_1, b_1\}$ for any $a_1, b_1 \in \mathbb{R}$. 
%%%%%%%%%%%%%%%%%%%%%%%%%%%%%%%%%%%%%%%%%%%%

We next introduce some assumptions on the coefficients for MV-SDEs \eqref{eq:MV_SDE}.
\begin{assumption}
\label{ass:assumptions_for_MV_coefficients}
 We assume the following hold uniformly for all $y, \bar{y} \in \mathbb{R}^d$ and $\rho, \bar{\rho} \in \mathscr{P}_2(\mathbb{R}^d)$:
\begin{itemize}
[label=\textnormal{(\arabic*)}]
    \item[(A1)] $\mathbb E\left[\left|X_0\right|^{\bar{p}}\right] < \infty$, where $\bar{p} > 2$ is fixed.
    \item [(A2)] The functions $f$ and $g$ meet the following coercivity condition: $$2 \langle y, f(y, \rho) \rangle + (\bar{p} - 1) |g(y, \rho)|^2 \leq C \left( 1 + |y|^2 + \mathbb{W}_2^2 (\rho, \delta_0) \right),$$
    for some constant $C>0$.
    \item[(A3)] The functions $f$ and $g$ satisfy coupled monotonicity condition:
    $$2\langle y - \bar{y}, f(y, \rho) - f(\bar{y}, \bar{\rho}) \rangle + p^{\ast}|g(y, \rho) - g(\bar{y}, \bar{\rho}) |^2 \leq C \left(|y-\bar{y}|^2 + \mathbb{W}_2^2 (\rho, \bar{\rho}) \right),$$
    for some constants $p^{\ast}>1$ and $C>0$.
    \item[(A4)] The function $f(y, \rho)$ is uniformly continuous in $y$.
\end{itemize}
\end{assumption}

Under Assumption \ref{ass:assumptions_for_MV_coefficients}, which ensures the existence, uniqueness and moment boundedness of the solution to \eqref{eq:MV_SDE} (cf. \cite[Theorem 2.1]{kumar2022well} with common noise), we deduce from \textit{(A2)} that
$$
\sup_{0 \leq t \leq \mathcal{T}} \mathbb{E} \left[ \left| X_{t} \right|^{p} \right] \le C\left(1+\mathbb{E}\left[|X_{0}|^{\bar{p}}\right]\right), \quad p \in [1, \bar{p}].
$$

\subsection{N-particle discretization}
One of the main difficulties in simulating the MV-SDEs \eqref{eq:MV_SDE} is that the exact distribution $\mathscr{L}_{X_t}$ of the solution is generally not accessible for all $t \geq 0$ in practice. Instead, for all $t \in [0, \mathcal{T}]$ and $i\in \mathcal{I}_{N}$, we approximate $\mathscr{L}_{X_t}$ using an interacting particle system (IPS):
\begin{equation}
\label{eq:interact_particle_system}
\dd X_t^{i, N} = f (X_t^{i, N}, \rho_t^{X, N} ) \, \dd t + \sum_{j=1}^{m} g_{j} ( X_t^{i, N}, \rho_t^{X, N}) \, \dd W_{t}^{i,j}, \quad \rho_t^{X, N} (\cdot) := \frac{1}{N} \sum_{i = 1}^{N} \delta_{X_t^{i, N}} (\cdot).
\end{equation}
Here, $g_j$ stands for its $j$-th column vector and $( W^{i}_{t}, X_0^{i})$ are independent copies of the pair $(W_t, X_0)$.

Under Assumption \ref{ass:assumptions_for_MV_coefficients}, the IPS \eqref{eq:interact_particle_system}, interpreted as an $\mathbb{R}^{d \times N}$-valued SDE, admits a unique strong solution with uniform moment bounds up to order $\bar{p}$ \footnote{For classical SDEs, see \cite{Gyongy01011980}; for MV-SDEs and their particle approximations, see \cite{kumar2022well}.}.
That is, there exists $C > 0$ such that for all $p \in [1, \bar{p}]$,
\begin{equation}
\label{remak:well_posedness_interact_particle}
\sup_{ t \in[0,\mathcal{T}]}\sup_{i\in\mathcal{I}_{N}} \mathbb{E} \left[ \left| X^{i,N}_{t} \right|^{p} \right] \le C\left(1+\mathbb{E}\left[|X_{0}^{i}|^{\bar{p}}\right]\right).
\end{equation}
Here $\bar{p}$ is from \textit{(A2)} of Assumption \ref{ass:assumptions_for_MV_coefficients}.

The IPS \eqref{eq:interact_particle_system} provides an approximation to the MV-SDE \eqref{eq:MV_SDE} as $N \to \infty$, a fact justified by the propagation of chaos (PoC) principle. This principle characterizes the asymptotic independence of the particles and the convergence of their empirical distribution toward the law of the solution to MV-SDE \eqref{eq:MV_SDE}.

To formalize this result, consider the corresponding system of non-interacting particles (NIPS) given by
\begin{equation}
\label{eq:non_interacting_particles_system}
\dd X^{i}_t =  f \left(X^{i}_t,  \mathscr{L}_{X^{i}_t} \right) \, \dd t + \sum_{j=1}^{m} g_{j} \left( X^{i}_t,  \mathscr{L}_{X^{i}_t}  \right) \, \dd W^{i,j}_{t},
\end{equation}
for all $t \in [0, \mathcal{T}]$ and $i \in \mathcal{I}_{N}$. 
Here ${X_t^i}$ are i.i.d. copies of $X_t$\footnote{That is, $\mathscr{L}_{X^{i}_t} = \mathscr{L}_{X_t}$ for all $t \in [0, \mathcal{T}]$, assuming the MV-SDE has a unique solution.}.

The following result, adapted from \cite[Proposition 1]{kumar2022well}, quantifies the convergence of the IPS \eqref{eq:interact_particle_system} to the MV-SDEs \eqref{eq:MV_SDE} in terms of the number of particles $N$.

\begin{proposition}
\label{prop:propagation_of_chaos}
(PoC, \cite[Proposition 1]{kumar2022well})  
Let Assumption \ref{ass:assumptions_for_MV_coefficients} hold with $\bar{p} > 4$.  
Define
\[
\eta_d(N) :=
\begin{cases} 
N^{-1/2}, & d < 4, \\[2pt]
N^{-1/2} \log N, & d = 4, \\[2pt]
N^{-2/d}, & d > 4 .
\end{cases}
\]
Then
\[
\sup_{t\in[0,\mathcal{T}] } \sup_{i \in \mathcal{I}_{N}} 
\mathbb{E} \left[ \left\vert X_{t}^{i} - X_{t}^{i, N} \right\vert^{2} \right] 
\leq C\, \eta_d(N),
\]
where $C > 0$ is independent of $N$.
\end{proposition}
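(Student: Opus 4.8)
The plan is to reduce the estimate to the classical synchronous-coupling argument of Sznitman, importing the dimension-dependent rate $\eta_d(N)$ from the sharp Wasserstein law of large numbers; the statement is essentially \cite[Proposition 1]{kumar2022well}, so I will only outline the structure. First I would couple the NIPS \eqref{eq:non_interacting_particles_system} and the IPS \eqref{eq:interact_particle_system} by driving the $i$-th particles with the same Brownian motion $W^i$ and the same initial datum $X_0^i$, setting $Z_s^i := X_s^i - X_s^{i,N}$ and abbreviating $\Delta f_s := f(X_s^i,\mathscr{L}_{X_s^i}) - f(X_s^{i,N},\rho_s^{X,N})$ and $\Delta g_s := g(X_s^i,\mathscr{L}_{X_s^i}) - g(X_s^{i,N},\rho_s^{X,N})$. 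Since the coefficients grow superlinearly, the stochastic integral produced by It\^o's formula is only a local martingale, so I would localize along the stopping times $\tau_R := \inf\{t \in [0,\mathcal{T}] : |X_t^i| \vee |X_t^{i,N}| \ge R\}$; applying It\^o's formula to $|Z_{t\wedge\tau_R}^i|^2$ and taking expectations annihilates the stopped martingale and leaves
\[
\mathbb{E}\big[|Z_{t\wedge\tau_R}^i|^2\big] = \mathbb{E}\!\int_0^{t\wedge\tau_R}\!\big( 2\langle Z_s^i, \Delta f_s\rangle + |\Delta g_s|^2 \big)\,\dd s .
\]

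Next, because $p^\ast > 1$ in (A3) and $|\Delta g_s|^2 \ge 0$, I would dominate the integrand by $2\langle Z_s^i, \Delta f_s\rangle + p^\ast |\Delta g_s|^2$ and apply the coupled monotonicity condition (A3) with $(y,\rho) = (X_s^i, \mathscr{L}_{X_s^i})$ and $(\bar{y},\bar{\rho}) = (X_s^{i,N}, \rho_s^{X,N})$ to obtain
\[
\mathbb{E}\big[|Z_{t\wedge\tau_R}^i|^2\big] \le C\,\mathbb{E}\!\int_0^{t\wedge\tau_R}\!\big( |Z_s^i|^2 + \mathbb{W}_2^2(\mathscr{L}_{X_s^i}, \rho_s^{X,N})\big)\,\dd s .
\]
Letting $R \to \infty$ is harmless: the right-hand integrand is now non-negative, so monotone convergence handles that side while Fatou handles the left, both limits being finite by the uniform moment bounds \eqref{remak:well_posedness_interact_particle} and their NIPS counterpart. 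To control the Wasserstein term I would insert the empirical measure $\mu_s^{X,N} := \frac{1}{N}\sum_{j=1}^N \delta_{X_s^j}$ of the i.i.d.\ copies; the triangle inequality for $\mathbb{W}_2$ combined with the coupling that matches $X_s^j$ with $X_s^{j,N}$ gives $\mathbb{W}_2^2(\mathscr{L}_{X_s^i}, \rho_s^{X,N}) \le 2\,\mathbb{W}_2^2(\mathscr{L}_{X_s}, \mu_s^{X,N}) + \frac{2}{N}\sum_{j=1}^N |Z_s^j|^2$. Taking expectations, using exchangeability so that $\mathbb{E}|Z_s^j|^2$ is independent of $j$, and setting $h_t := \sup_{i \in \mathcal{I}_N}\mathbb{E}|Z_t^i|^2$, I arrive at $h_t \le C\int_0^t h_s\,\dd s + C\mathcal{T}\sup_{s\in[0,\mathcal{T}]}\mathbb{E}\big[\mathbb{W}_2^2(\mathscr{L}_{X_s}, \mu_s^{X,N})\big]$, and Gr\"onwall's lemma yields $\sup_{t\in[0,\mathcal{T}]} h_t \le C\sup_{s\in[0,\mathcal{T}]}\mathbb{E}\big[\mathbb{W}_2^2(\mathscr{L}_{X_s}, \mu_s^{X,N})\big]$.

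The last and genuinely substantive step is to estimate $\mathbb{E}[\mathbb{W}_2^2(\mathscr{L}_{X_s}, \mu_s^{X,N})]$, the $L^2$-Wasserstein fluctuation of the empirical measure of $N$ i.i.d.\ draws from $\mathscr{L}_{X_s}$. Here I would invoke the non-asymptotic Wasserstein law of large numbers of Fournier and Guillin: since $\bar{p} > 4$, the law $\mathscr{L}_{X_s}$ has a finite $\bar{p}$-th moment uniformly in $s \in [0,\mathcal{T}]$ by (A2), which is exactly the integrability that yields $\sup_{s\in[0,\mathcal{T}]}\mathbb{E}[\mathbb{W}_2^2(\mathscr{L}_{X_s}, \mu_s^{X,N})] \le C\,\eta_d(N)$ with the three regimes $d<4$, $d=4$, $d>4$ precisely as stated; combining this with the Gr\"onwall bound closes the proof. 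I expect this final step to be the main obstacle: both the dimension dependence in $\eta_d(N)$ and the threshold $\bar{p} > 4$ are inherited entirely from the Fournier--Guillin estimate, whereas the coupling and monotonicity steps above are routine. The only other delicate point is the localization introduced at the outset, required to legitimize discarding the local-martingale contribution in the superlinear regime, and this is dispatched by the Fatou/monotone-convergence argument indicated above.
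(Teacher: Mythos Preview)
The paper does not prove this proposition at all: it is stated with the attribution ``(PoC, \cite[Proposition 1]{kumar2022well})'' and then used without further argument, so there is no in-paper proof to compare against. Your outline is the standard synchronous-coupling argument (It\^o's formula plus the coupled monotonicity condition (A3), the triangle-inequality splitting through the empirical measure of the i.i.d.\ copies, exchangeability, Gr\"onwall, and finally the Fournier--Guillin rate for $\mathbb{E}[\mathbb{W}_2^2(\mathscr{L}_{X_s},\mu_s^{X,N})]$), and it is correct; in particular you have correctly identified that both the dimensional trichotomy in $\eta_d(N)$ and the moment threshold $\bar{p}>4$ come entirely from the Fournier--Guillin estimate, while the remaining steps are routine under (A3). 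This is exactly the structure of the proof in the cited reference, so your proposal matches what the paper defers to.
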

The PoC is crucial to establishing a strong convergence analytical framework for the MV-SDE. 
\section{Strongly convergent Milstein-type schemes for IPS} \label{sec:Mil-sche-and-moment-bound}
We now present the considered time discretization for IPS \eqref{eq:interact_particle_system}. In the sequel, we adopt a uniform time discretization over $[0, \mathcal{T}]$ using a step size $\Delta t = \frac{\mathcal{T}}{n}, n\in\mathbb{N}_{+}$. 
The discrete time points are then given by $t_k = k\Delta t$ for $k = 0, 1, \dots, n$.

\subsection{%
  \texorpdfstring{Milstein-type schemes and the maps $\Gamma_l$}{Milstein-type schemes and the maps Gamma l}%
}
Recall the Milstein method for SDEs, when applied to each particle in IPS \eqref{eq:interact_particle_system} associated with the MV-SDE \eqref{eq:MV_SDE}, leads to the following scheme:
\begin{align} \label{eq:classi_Milstein_method}
   \bar{Y}_{t_{k+1}}^{i, N} =&~ \bar{Y}_{t_{k}}^{i,N} + f \left(\bar{Y}_{t_{k}}^{i, N}, \rho_{t_{k}}^{\bar{Y},N} \right) \Delta t  + \sum_{j_{1}=1}^{m} g_{j_{1}} \left(\bar{Y}_{t_{k}}^{i, N}, \rho_{t_{k}}^{\bar{Y}, N} \right) \Delta W^{i,j_{1}}_{t_{k}}  \notag \\
  &~+\sum_{j_{1},j_{2}=1}^{m} \mathcal{L}_{y}^{j_{2}}~g_{j_{1}}\left(\bar{Y}_{t_{k}}^{i,N}, \rho_{t_{k}}^{\bar{Y}, N} \right) \int_{t_{k}}^{t_{k+1}} \int_{t_{k}}^{s} \mathrm{d} W_{r}^{i,j_{2}}  \mathrm{d} W^{i,j_{1}}_{s}  \notag \\
  &~+\frac{1}{N}\sum_{j_{1},j_{2}=1}^{m} \sum_{k_{1}=1}^{N}  \mathcal{L}_{\rho}^{j_{2}}~g_{j_{1}} \left(\bar{Y}_{t_{k}}^{i,N},\rho_{t_{k}}^{\bar{Y},N},\bar{Y}_{t_{k}}^{k_{1},N}\right) \int_{t_{k}}^{t_{k+1}}  \int_{t_{k}}^{s} \mathrm{d} W_{r}^{k_{1},j_{2}}\mathrm{d} W^{i,j_{1}}_{s},
\end{align}

where
\begin{align*}
\Delta W^{i,j_1}_{t_k} := &~ W^{i,j_1}_{t_{k+1}} - W^{i,j_1}_{t_k}, \notag \\
\mathcal{L}_{y}^{j_{2}}g_{j_{1}}\left(\bar{Y}_{t_{k}}^{i,N},\rho_{t_{k}}^{\bar{Y},N}\right) := &~\partial_{y}g_{j_{1}}\left(\bar{Y}_{t_{k}}^{i,N},\rho_{t_{k}}^{\bar{Y},N}\right)g_{j_{2}}\left(\bar{Y}_{t_{k}}^{i,N},\rho_{t_{k}}^{\bar{Y},N}\right), \notag \\
\mathcal{L}_{\rho}^{j_{2}}g_{j_{1}}\left(\bar{Y}_{t_{k}}^{i,N},\rho_{t_{k}}^{\bar{Y},N},\bar{Y}_{t_{k}}^{k_{1},N}\right) := &~\partial_{\rho}g_{j_{1}}\left(\bar{Y}_{t_{k}}^{i,N},\rho_{t_{k}}^{\bar{Y},N},\bar{Y}_{t_{k}}^{k_{1},N}\right) g_{j_{2}} \left(\bar{Y}_{t_{k}}^{k_{1},N},\rho_{t_{k}}^{\bar{Y},N}\right). \footnotemark
\end{align*}
\footnotetext{Here, $\partial_y g_j$ is classical gradient with respect to the state variable $y$ and $\partial_{\rho}g(x,\rho,z)$ denotes the Lions derivative of $g$ with respect to $\rho$, as introduced in \ref{appen:differen_measure}.}

Under standard assumptions such as Lipschitz continuity of the coefficients $f$ and $g$, the scheme achieves strong convergence of order one \cite{Milstein2004stochastic}. However, in the presence of superlinear (e.g., polynomial) growth and a lack of Lipschitz regularity, the method, like the Euler scheme, can blow up in finite time \cite{hutzenthaler2011strong}.
Inspired by the Milstein method, we propose the following Milstein-type schemes approximating the IPS \eqref{eq:interact_particle_system}:
\begin{align} \label{eq:modified_Milstein_method}
   Y_{t_{k+1}}^{i, N} =& Y_{t_{k}}^{i,N} + \Gamma_{1}\left(f \left(Y_{t_{k}}^{i, N}, \rho_{t_{k}}^{Y,N} \right),\Delta t \right) \Delta t  + \sum_{j_{1}=1}^{m} \Gamma_{2}\left(g_{j_{1}} \left(Y_{t_{k}}^{i, N}, \rho_{t_{k}}^{Y, N} \right), \Delta t\right) \Delta W^{i,j_{1}}_{t_{k}}  \notag \\
  &~+\sum_{j_{1},j_{2}=1}^{m} \Gamma_{3}\left(\mathcal{L}_{y}^{j_{2}}~g_{j_{1}}\left(Y_{t_{k}}^{i,N}, \rho_{t_{k}}^{Y, N} \right),\Delta t\right) \int_{t_{k}}^{t_{k+1}} \int_{t_{k}}^{s} \mathrm{d} W_{r}^{i,j_{2}}  \mathrm{d} W^{i,j_{1}}_{s}  \notag \\
  &~+\frac{1}{N}\sum_{j_{1},j_{2}=1}^{m} \sum_{k_{1}=1}^{N} \Gamma_{4}\left( \mathcal{L}_{\rho}^{j_{2}}~g_{j_{1}} \left(Y_{t_{k}}^{i,N},\rho_{t_{k}}^{Y,N},Y_{t_{k}}^{k_{1},N}\right) ,\Delta t\right)\int_{t_{k}}^{t_{k+1}}  \int_{t_{k}}^{s} \mathrm{d} W_{r}^{k_{1},j_{2}}\mathrm{d} W^{i,j_{1}}_{s},
\end{align}
where $\{\Gamma_l(\cdot)\}_{l=1}^4$ are taming operators approximating $\cdot$, and the initial values satisfy $Y_0^{i,N} = Y_0^i$ for $\{Y_0^i\}_{i=1}^N \stackrel{\text{i.i.d.}}{\sim} Y_0$ and $Y_0 \sim \mathscr{L}_{X_0}$.

\subsection{%
  \texorpdfstring{Moment bounds of the Milstein-type schemes \eqref{eq:modified_Milstein_method}}%
  {Moment bounds of the Milstein-type schemes}%
}
\label{sec:moment_bounds_Milstein}
To establish moment bounds for the Milstein-type schemes \eqref{eq:modified_Milstein_method}, we first present the necessary structural assumptions on the coefficients and maps $\{\Gamma_l(\cdot)\}_{l=1}^4$. 

\begin{assumption} 
\label{ass:poly-f-initial}
\noindent \mylabel{ass:Initial-value}{(H1)}
    There exist constant $C>0$ such that 
    $$
    |f(0,\delta_{0})|\vee |g(0,\delta_{0})| \vee |\partial_{y}f(0,\delta_{0})|\vee |\partial_{\rho}f(0,\delta_{0},0)| \vee |\partial_{y}g_{j}(0,\delta_{0})|\vee|\partial_{\rho}g_{j}(0,\delta_{0},0)| \le C.
    $$
\mylabel{ass:polynomial-growth-of-f}{(H2)}
There exist $\gamma\ge 2$ and $C>0$ such that for all $y,\bar{y}\in \mathbb{R}^{d}$, $\rho,\bar{\rho}\in \mathscr{P}_{2}(\mathbb{R}^{d})$,
    \begin{align}
    |f(y, \rho) - f(\bar{y}, \bar{\rho}) | \leq C \left( \left( 1 + |y|^{\gamma} + |\bar{y}|^{\gamma} \right) |y - \bar{y}| + \mathbb{W}_2 (\rho, \bar{\rho}) \right).
    \end{align}
\end{assumption}
The conditions \textit{(H1)} and \textit{(H2)} imply that
\begin{align}   
\label{ineq:growth-condition-of-f}
    \left|f(y,\rho)\right| 
    &~\le \left|f(y,\rho)-f(0,\delta_{0})\right| + \left|f(0,\delta_{0})\right|  
    \le C\left(1+|y|^{\gamma+1}+ \mathbb{W}_{2}\left(\rho,\delta_{0}\right)\right).
\end{align}
Based on \textit{(A3)} of Assumption \ref{ass:assumptions_for_MV_coefficients}, together with the Cauchy-Schwarz inequality, we obtain
\begin{equation}
\label{ineq:polynomial-growth-of-g}   
    \left|g(y,\rho)-g(\bar{y},\bar{\rho})\right|^{2} 
    \le C \left(\left(1+|y|^{\gamma}+|\bar{y}|^{\gamma }\right)\left|y-\bar{y}\right|^{2}+\mathbb{W}_{2}^{2}(\rho,\bar{\rho})\right),
\end{equation}  
which implies the polynomial growth 
\begin{equation}
\label{ineq:growth-condition-of-g}
\left|g(y,\rho)\right| \leq C\big(1+|y|^{\frac{\gamma}{2}+1}+ \mathbb{W}_{2}\left(\rho,\delta_{0}\right)\big). 
\end{equation}

To handle the higher-order terms in the Milstein-type schemes, we further assume polynomial growth conditions on the differential operators
 $\partial_{y}f$, $\partial_{y}g_{j}$, $\partial_{\rho}f$ and $\partial_{\rho}g_{j}$ (for $j=1,2,\cdots,m$).
 
\begin{assumption} \label{ass:Derivative-of-f-and-g-with-respect-to-y-mu}

For all $y, \bar{y}, z, \bar{z} \in \mathbb{R}^d$ and $\rho, \bar{\rho} \in \mathscr{P}_{2}(\mathbb{R}^d)$, the derivatives satisfy:
\begin{align*}
    \left|\partial_{y}f(y,\rho)-\partial_{y}f(\bar{y},\bar{\rho})\right| &~\leq C \left( \left( 1 + |y|^{\gamma-1} + |\bar{y}|^{\gamma-1} \right) |y - \bar{y}| + \mathbb{W}_2 (\rho, \bar{\rho}) \right), \\
    \left|\partial_{y}\,g_{j}(y,\rho)-\partial_{y}\,g_{j}(\bar{y},\bar{\rho})\right| &~\leq C \Big( \big( 1 + |y|^{\frac{\gamma}{2}-1} + |\bar{y}|^{\frac{\gamma}{2}-1} \big) |y - \bar{y}| + \mathbb{W}_2 (\rho, \bar{\rho}) \Big), \\
    \left|\partial_{\rho} f(y,\rho,z)-\partial_{\rho}f(\bar{y},\bar{\rho},\bar{z})\right| &~\leq C \left( \left( 1 + |y|^{\gamma} + |\bar{y}|^{\gamma} \right) |y - \bar{y}| +|z-\bar{z}| + \mathbb{W}_2 (\rho, \bar{\rho}) \right), \\
    \left|\partial_{\rho}\,g_{j}(y,\rho,z)-\partial_{\rho}\,g_{j}(\bar{y},\bar{\rho},\bar{z})\right| &~\leq C \Big( \big( 1 + |y|^{\frac{\gamma}{2}} + |\bar{y}|^{\frac{\gamma}{2} } \big) |y - \bar{y}| +|z-\bar{z}|+ \mathbb{W}_2 (\rho, \bar{\rho}) \Big),
\end{align*}
with some constants $C >0, \gamma\ge 2$.
\end{assumption} 
Similar arguments in \eqref{ineq:growth-condition-of-f} and \eqref{ineq:growth-condition-of-g}, these conditions yield the growth bounds:
\begin{align}
    |\partial_{y}f(y,\rho)| \le &~ C \left(1+|y|^{\gamma}+ \mathbb{W}_{2}\left(\rho,\delta_{0}\right)\right), \label{ineq:growth-condition-of-f-y}\\ 
    |\partial_{y}\,g_{j}(y,\rho)| \le &~ C \left(1+|y|^{\frac{\gamma}{2}}+ \mathbb{W}_{2}\left(\rho,\delta_{0}\right)\right), \label{ineq:growth-condition-of-g-y}\\
    |\partial_{\rho} f(y,\rho,z)|  \le  &~ C\left(1+|y|^{\gamma+1}+|z|+ \mathbb{W}_{2}\left(\rho,\delta_{0}\right)\right), \label{ineq:growth-condition-of-f-mu}\\
    |\partial_{\rho}\,g_{j}(y,\rho,z)| \le &~C\left(1+|y|^{\frac{\gamma}{2}+1}+|z|+ \mathbb{W}_{2}\left(\rho,\delta_{0}\right)\right).\label{ineq:growth-condition-of-g-mu}
\end{align}

In addition to the conditions on the coefficients, we introduce assumptions on the operators $\{\Gamma_l(\cdot)\}_{l=1}^4$, which are applied to the mappings $\mathds{F}_l$ corresponding to $f$, $g_j$, $\mathcal{L}_{y}^{j_2} g_{j_1}$, and $\mathcal{L}_{\rho}^{j_2} g_{j_1}$.
\begin{assumption} \label{ass:Gamma-control-conditions}
There exist $C, \zeta_{l}>0 ~(l=1,2,3,4)$ such that
\begin{align*}
\big|\Gamma_{l}\big(\mathds{F}_l(\cdot),\Delta t\big)\big| &\le C\Delta t^{-\zeta_{l}} \wedge |\mathds{F}_l(\cdot)|.
\end{align*} 
\end{assumption}

\begin{assumption} \label{ass:Coefficient-comparison-conditions-of-f}
There exist $ C, r_{1}, r_{2}>0 $ such that
\begin{align*}
\left|\Gamma_{1}(f,\Delta t)-f\right| \le & C \Delta t ^{r_{1}} \left|f\right|^{r_{2}}.
\end{align*}
\end{assumption}
The Assumption \ref{ass:Coefficient-comparison-conditions-of-f} can be viewed as a consistency requirement for the operator $\Gamma_{1}$, ensuring that it approximates $f$ with an error that vanishes as $\Delta t \to 0$ for any fixed $f \in \mathbb{R}^d$. In other words, $\Gamma_{1}(f, \Delta t) \to f$ as $\Delta t \to 0$, and the bound in Assumption \ref{ass:Coefficient-comparison-conditions-of-f} quantifies both the rate of convergence (via $r_{1}$) and its possible growth with respect to $\lvert f \rvert$ (via $r_{2}$). 

To establish the strong convergence order of the Milstein-type schemes \eqref{eq:modified_Milstein_method}, it is necessary to derive moment bounds for the numerical solutions.
\begin{lemma} \label{lem:moment-bounds-of-numerical-solution}
Suppose Assumptions \ref{ass:assumptions_for_MV_coefficients} and \ref{ass:Initial-value}-\ref{ass:Coefficient-comparison-conditions-of-f} hold. Then there exist $C,\beta>0$, such that 
\begin{align} \label{esti:moment_boundness}
\sup_{i\in\mathcal{I}_{N}} \mathbb{E}\left[|Y_{t_{k}}^{i,N}|^{p}\right] \leq C\bigg(1+\Big(\mathbb{E}\Big[\big|Y_{0}\big|^{\bar{p}}\Big]\Big)^{\beta}\bigg), \quad p\in \left[2, \frac{\bar{p}-\Theta}{1+ \bar{\zeta}\Theta}\right], \quad  k=0,1,\cdots,n,
\end{align}
where $\Theta:=\Theta(\gamma,r_{1},r_{2})=\frac{r_{2}(\gamma+1)-1}{r_{1}}\vee 3\gamma $ and $\bar{\zeta} = \zeta_{1} \vee \left(\zeta_{2}+\frac{1}{2}\right) \vee \zeta_{3} \vee \zeta_{4}$. Here $\gamma, \{\zeta_{l}\}_{l=1}^{4}, r_{1}, r_{2}$ come from \textit{(H2)}, Assumptions  \ref{ass:Gamma-control-conditions} and \ref{ass:Coefficient-comparison-conditions-of-f}. Additionally, $\bar{p}\ge 2+(2\bar{\zeta}+1)\Theta$.
\end{lemma}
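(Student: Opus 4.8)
The plan is to work directly with the one-step recursion \eqref{eq:modified_Milstein_method}, squaring (or raising to the $p$-th power) and taking conditional expectations, rather than passing to a continuous-time interpolant and invoking It\^o's formula. Write the increment as $Y_{t_{k+1}}^{i,N} - Y_{t_k}^{i,N} = A_k^i \Delta t + M_k^i$, where $A_k^i = \Gamma_1(f(\cdot),\Delta t)$ is the tamed drift and $M_k^i$ collects the three stochastic terms (the $\Gamma_2$ first-order term and the two $\Gamma_3,\Gamma_4$ iterated-integral terms). First I would record the elementary facts about the building blocks: the first-order It\^o integral $\Delta W^{i,j_1}_{t_k}$ has conditional second moment $\Delta t$, the Lévy-area-type integrals $\int_{t_k}^{t_{k+1}}\!\int_{t_k}^{s} \mathrm{d}W_r^{i,j_2}\mathrm{d}W_s^{i,j_1}$ have conditional $L^q$-norm of order $\Delta t$ (order $\Delta t^{q/2}\cdot\Delta t^{q/2}=\Delta t^{q}$ in the $q$-th moment... more precisely $\|\cdot\|_{L^q}\lesssim \Delta t$), and the cross terms ($j_1\neq j_2$ or distinct particle indices) are conditionally mean zero. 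Using Assumption \ref{ass:Gamma-control-conditions}, each $\Gamma_l$ is bounded by $C\Delta t^{-\zeta_l}$, so $|A_k^i|\Delta t \le C\Delta t^{1-\zeta_1}$, $|\Gamma_2(\cdot)|\,|\Delta W|\lesssim \Delta t^{-\zeta_2}\cdot\Delta t^{1/2}=\Delta t^{1/2-\zeta_2}$ in $L^2$, and the Milstein correction terms are $\lesssim \Delta t^{-\zeta_3}\Delta t$ and $\Delta t^{-\zeta_4}\Delta t$; hence the whole martingale-type part $M_k^i$ is, conditionally, $O(\Delta t^{1/2-\bar\zeta})$ in every $L^q$, which is where the exponent $\bar\zeta = \zeta_1\vee(\zeta_2+\tfrac12)\vee\zeta_3\vee\zeta_4$ enters.

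The core estimate is to expand $\mathbb{E}\big[|Y_{t_{k+1}}^{i,N}|^{p}\,\big|\,\mathcal{F}_{t_k}\big]$ via the binomial (multinomial) theorem. Writing $p$ as an even integer first (then interpolating for general $p\in[2,\cdot]$), one gets
\[
|Y_{t_{k+1}}^{i,N}|^{p} = \big(|Y_{t_k}^{i,N}|^2 + 2\langle Y_{t_k}^{i,N}, A_k^i\Delta t + M_k^i\rangle + |A_k^i\Delta t + M_k^i|^2\big)^{p/2},
\]
and expanding the power. The leading term is $|Y_{t_k}^{i,N}|^p$; the next term, $\tfrac{p}{2}|Y_{t_k}^{i,N}|^{p-2}\cdot 2\langle Y_{t_k}^{i,N}, A_k^i\rangle\Delta t$, is the one that must be controlled by a coercivity/one-sided estimate. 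Here is the role of Assumption \ref{ass:Coefficient-comparison-conditions-of-f}: I would write $A_k^i = f(Y_{t_k}^{i,N},\rho_{t_k}^{Y,N}) + (\Gamma_1(f,\Delta t)-f)$, bound the second piece by $C\Delta t^{r_1}|f|^{r_2}\lesssim \Delta t^{r_1}(1+|Y_{t_k}^{i,N}|^{(\gamma+1)r_2}+\mathbb W_2(\rho_{t_k}^{Y,N},\delta_0)^{r_2})$ using \eqref{ineq:growth-condition-of-f}, and absorb $\langle Y_{t_k}^{i,N}, f(Y_{t_k}^{i,N},\rho_{t_k}^{Y,N})\rangle$ via the coercivity condition (A2) together with $\mathbb{E}[|g|^2]$-type terms from the quadratic part. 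The point is that the "bad" polynomial contributions from the taming error and from the diffusion/Milstein terms all carry a positive power of $\Delta t$ (at worst $\Delta t^{r_1}$, $\Delta t^{1-\zeta_\bullet}$, or $\Delta t^{1-2\bar\zeta}$), and the power of $|Y_{t_k}^{i,N}|$ they multiply is at most $p - 2 + \Theta$ (roughly), where $\Theta = \frac{r_2(\gamma+1)-1}{r_1}\vee 3\gamma$ is precisely the bookkeeping constant that makes $\Delta t^{r_1}|Y|^{p-2+(\gamma+1)r_2-1} = \Delta t^{r_1}|Y|^{p-2}\cdot|Y|^{(\gamma+1)r_2-1}$ absorbable after Young's inequality against $|Y|^{p}$ at the cost of a $\Delta t$-independent constant, while the excess moment $\Theta$ is exactly what the restriction $p\le \frac{\bar p-\Theta}{1+\bar\zeta\Theta}$ and $\bar p\ge 2+(2\bar\zeta+1)\Theta$ are designed to accommodate. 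Collecting everything, I would arrive at a conditional one-step inequality of Gronwall type,
\[
\mathbb{E}\big[|Y_{t_{k+1}}^{i,N}|^{p}\,\big|\,\mathcal{F}_{t_k}\big] \le (1 + C\Delta t)\,|Y_{t_k}^{i,N}|^{p} + C\Delta t\Big(1 + \tfrac1N\textstyle\sum_{k_1}|Y_{t_k}^{k_1,N}|^{p}\Big),
\]
where the empirical-measure terms $\mathbb W_2(\rho_{t_k}^{Y,N},\delta_0)^p \le \tfrac1N\sum_{k_1}|Y_{t_k}^{k_1,N}|^p$ are handled by averaging over $i$. Taking expectations, then the maximum over $i$ (the system is exchangeable), and iterating the discrete Gronwall inequality over $k=0,\dots,n$ with $n\Delta t = \mathcal T$ gives $\sup_i\mathbb{E}[|Y_{t_k}^{i,N}|^p]\le C(1+\mathbb{E}[|Y_0|^p])e^{C\mathcal T}$; tracking how the higher-moment inputs $\mathbb{E}[|Y_0|^{\bar p}]$ are consumed through the Young's-inequality steps produces the power $\beta$ and the stated admissible range of $p$.

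The main obstacle, and the place where the discrete approach genuinely differs from the It\^o-formula route, is bounding the higher-order terms in the binomial expansion — the terms of the form $|Y_{t_k}^{i,N}|^{p-2j}|A_k^i\Delta t + M_k^i|^{2j}$ for $j\ge 2$, and in particular the odd/mixed terms $|Y_{t_k}^{i,N}|^{p-1}|M_k^i|$, $|Y_{t_k}^{i,N}|^{p-2}\langle Y_{t_k}^{i,N},A_k^i\rangle|M_k^i|$, etc. These require carefully taking conditional expectations to kill the mean-zero martingale increments, using the independence of $\Delta W^{i,\cdot}_{t_k}$ across $j_1\neq j_2$ and across particle indices, and estimating the surviving "diagonal" contributions (which is where the conditional $L^q$ bounds on the single and iterated stochastic integrals, scaling like $\Delta t^{1/2}$ and $\Delta t$ respectively, matter). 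One has to be scrupulous that every such residual term either vanishes in conditional expectation or carries a full power of $\Delta t$ multiplied by $|Y_{t_k}^{i,N}|$ to a power not exceeding $p-2+\Theta$ and $\tfrac1N\sum_{k_1}|Y_{t_k}^{k_1,N}|$ to a low power — so that the final Young's-inequality absorption closes. Keeping this bookkeeping uniform in $N$ and in $k$, while only assuming once-differentiable coefficients (so no second-order Taylor terms are available and one leans on the Lipschitz-type bounds \eqref{ineq:growth-condition-of-f}–\eqref{ineq:growth-condition-of-g-mu} for the derivatives), is the delicate part.
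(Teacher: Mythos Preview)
Your binomial-expansion strategy is the right starting point, but the one-step inequality you claim,
\[
\mathbb{E}\big[|Y_{t_{k+1}}^{i,N}|^{p}\,\big|\,\mathcal{F}_{t_k}\big] \le (1 + C\Delta t)\,|Y_{t_k}^{i,N}|^{p} + C\Delta t\Big(1 + \tfrac1N\textstyle\sum_{k_1}|Y_{t_k}^{k_1,N}|^{p}\Big),
\]
does not follow. The expansion produces terms such as $|Y_{t_k}^{i,N}|^{\,p-1+r_2(\gamma+1)}\Delta t^{1+r_1}$ (from the taming error $\Gamma_1(f)-f$ combined with \eqref{ineq:growth-condition-of-f}) and $|Y_{t_k}^{i,N}|^{\,p+\frac{\gamma}{2}l}\Delta t^{l/2}$ for $l\ge 3$ (from the higher binomial pieces, using $|g|\lesssim 1+|y|^{\gamma/2+1}$), whose $|Y|$-exponent strictly exceeds $p$. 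Young's inequality cannot reduce these to $C|Y|^p$ plus a $\Delta t$-independent constant: the exponent is already too large, so any such splitting would require an a priori bound on a moment of order at least $p+\Theta$, which is precisely the quantity under estimation. In short, $\Theta$ is not a Young's-inequality bookkeeping constant in the sense you describe, and the recursion you write down is circular.

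The paper closes this gap by a localization argument that is absent from your outline. One works at moment order $\bar p$ (not $p$) restricted to the decreasing events $\Omega_{\mathcal R,k}^i=\{|Y_{t_j}^{i,N}|\le\mathcal R,\ j\le k\}$ with the threshold $\mathcal R=\Delta t^{-1/\Theta}$. On these events every excess power satisfies $|Y_{t_k}^{i,N}|^{\bar p+\alpha}\le\mathcal R^{\alpha}|Y_{t_k}^{i,N}|^{\bar p}$, and $\Theta=\frac{r_2(\gamma+1)-1}{r_1}\vee 3\gamma$ is calibrated exactly so that $\mathcal R^{\alpha}$ times the accompanying positive power of $\Delta t$ is $\le C\Delta t$ for each offending term; \emph{this} is the mechanism that closes the recursion and yields $\sup_i\mathbb E[\mathds 1_{\Omega_{\mathcal R,k}^i}|Y_{t_k}^{i,N}|^{\bar p}]\le C(1+\mathbb E[|Y_0|^{\bar p}])$. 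On the complement one uses the crude consequence of Assumption~\ref{ass:Gamma-control-conditions}, namely $\mathbb E[|Y_{t_k}^{i,N}|^{\bar p}]\le C(\mathbb E[|Y_0|^{\bar p}]+\Delta t^{-\bar p\bar\zeta})$, and controls $\mathbb E[\mathds 1_{\Omega_{\mathcal R,k}^{i,c}}|Y_{t_k}^{i,N}|^{p}]$ via H\"older and Chebyshev against the good-event bound. The admissible range $p\le\frac{\bar p-\Theta}{1+\bar\zeta\Theta}$ and the exponent $\beta$ come from balancing these two pieces (choosing the H\"older exponent so that the $\Delta t^{-\bar\zeta p}$ blow-up is cancelled by the $\mathcal R^{-\bar p/q'}$ decay), not from a direct Young's-inequality absorption inside a single step.
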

The proof of this lemma follows by extending the techniques developed in \cite{ZHANG20171,zhao2024weakerroranalysisstrong,jian2025modified};
a detailed argument is provided in  \ref{appen:proof-moment-bound-modi-method}.

\subsection{%
  \texorpdfstring{Strong convergence of order $1$ for Milstein-type schemes \eqref{eq:modified_Milstein_method}}%
  {Strong convergence of order 1 for Milstein-type schemes}%
}
\label{sec:conver_rate_Milstein}

In order to obtain the strong convergence order of the Milstein-type scheme \eqref{eq:modified_Milstein_method} for the IPS defined by \eqref{eq:interact_particle_system}, we consider the continuous-time formulation of the scheme. For $s \in [0, \mathcal{T}]$, define  $\tau_n(s) := \frac{\lfloor ns \rfloor}{n} = \sup \{t_{k}\in\{t_{l}\}_{l=0}^{n}:  t_{k}\leq s\}$ which represents the piecewise constant projection of $s$. The continuous-time Milstein-type approximation corresponding to \eqref{eq:modified_Milstein_method} then takes the form
\begin{equation} \label{eq:continuous-version-of-MMS}
    Y_{t}^{i,N} = Y_{0}^{i,N} + \int_{0}^{t}\Gamma_{1}\Big(f\Big(Y_{\tau_{n}(s)}^{i,N},\rho_{\tau_{n}(s)}^{Y,N}\Big),\Delta t\Big) \,\mathrm{d} s  + \sum_{j_{1}=1}^{m} \int_{0}^{t} \Gamma_{g} \Big(\hat{g}_{j_{1}}\Big(s,Y_{\tau_{n}(s)}^{i,N},\rho_{\tau_{n}(s)}^{Y,N}\Big),\Delta t \Big) \,\mathrm{d} W^{i,j_{1}}_{s} ,        
\end{equation}
where ${\Gamma}_{g}(\cdot)$ is defined as
\begin{equation}  \label{eq:Gamma_g}
\begin{aligned}  
&\Gamma_{g} \Big(\hat{g}_{j_{1}}\Big(s,Y_{\tau_{n}(s)}^{i,N},\rho_{\tau_{n}(s)}^{Y,N}\Big),\Delta t \Big)  \\
:=&~ \Gamma_{2}\Big(g_{j_{1}} \Big(Y_{\tau_{n}(s)}^{i, N}, \rho_{\tau_{n}(s)}^{Y, N} \Big), \Delta t\Big) +\sum_{j_{2}=1}^{m} \int_{\tau_{n}(s)}^{s} \Gamma_{3}\Big(\mathcal{L}_{y}^{j_{2}}~g_{j_{1}}\Big(Y_{\tau_{n}(r)}^{i, N}, \rho_{\tau_{n}(r)}^{Y, N} \Big),\Delta t\Big)\,   \mathrm{d}W^{i,j_{2}}_{r}    \\
&+\frac{1}{N}\sum_{j_{2}=1}^{m} \sum_{k_{1}=1}^{N} \int_{\tau_{n}(s)}^{s} \Gamma_{4}\Big( \mathcal{L}_{\rho}^{j_{2}}~g_{j_{1}} \Big(Y_{\tau_{n}(r)}^{i,N},\rho_{\tau_{n}(r)}^{Y,N},Y_{\tau_{n}(r)}^{k_{1},N}\Big) ,\Delta t\Big)   \, \mathrm{d}W^{k_{1},j_{2}}_{r}.
\end{aligned}
\end{equation}

To ensure convergence, we impose the following condition on the difference between each approximation operator $\Gamma_l$ and its corresponding original function $\mathds{F}_l$, where $\mathds{F}_l$ stands for $f,g$, $\mathcal{L}_{y}^{j_{2}}g_{j_{1}}$, $\mathcal{L}_{\rho}^{j_{2}}g_{j_{1}}$
for $l=1,2,3,4$, respectively.

\begin{assumption} \label{ass:Coefficient-comparison-conditions-of-Gamma1-Gamma4}
There exist constants $C>0,~\delta_{1}, \delta_{2} \ge 1$, $ \delta_{3},\delta_{4}\ge \frac{1}{2}$ and $\gamma_{l}\ge1$ such that
\begin{align*}
\left|\Gamma_{l}\left(\mathds{F}_l(\cdot),\Delta t\right)- \mathds{F}_l(\cdot)\right| \le & ~C \Delta t ^{\delta_{l} } \left|\mathds{F}_l(\cdot)\right|^{\gamma_{l}},  \quad l=1,2,3,4.
\end{align*}
\end{assumption}
This assumption strengthens Assumption \ref{ass:Coefficient-comparison-conditions-of-f} by providing more precise control over the approximation operators. 

We summarize our main result below, the complete proof and the necessary auxiliary estimates are provided in Section \ref{sec:conver-analy}.

\begin{theorem} \label{thm:convergence-result-particle-scheme}
Let Assumptions \ref{ass:assumptions_for_MV_coefficients}, \ref{ass:Initial-value}-\ref{ass:Gamma-control-conditions} and \ref{ass:Coefficient-comparison-conditions-of-Gamma1-Gamma4} be satisfied. Then there exist $C,\beta>0$ such that 
$$
\sup _{t \in[0, \mathcal{T}]} \sup _{i\in \mathcal{I}_{N}} \mathbb{E}\left[\Big|X_{t}^{i,N}-Y_{t}^{i,N}\Big|^{p}\right] \le C\bigg(1+\Big(\mathbb{E}\Big[\big|Y_{0}\big|^{\bar{p}}\Big]\Big)^{\beta}\bigg) \Delta t^{p}, \quad p\in[2,\tilde{p}\wedge \hat{p}],
$$
where $\hat{p}$ and $\tilde{p}$ are from Lemmas \ref{lem:g_and_Gamma_g_difference} and \ref{lem:e_and_f_difference}. Also, $\bar{p}$ is from Lemma \ref{lem:moment-bounds-of-numerical-solution}.
\end{theorem}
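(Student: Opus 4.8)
The plan is to run a stability-plus-consistency argument entirely at the particle level — so Proposition \ref{prop:propagation_of_chaos} plays no role here — comparing the interacting particle system \eqref{eq:interact_particle_system} with the continuous-time scheme \eqref{eq:continuous-version-of-MMS}. Write $E_t^{i,N}:=X_t^{i,N}-Y_t^{i,N}$ and set $\varphi(t):=\sup_{i\in\mathcal{I}_{N}}\mathbb{E}\big[|E_t^{i,N}|^{p}\big]$; the goal is a Gr\"onwall inequality for $\varphi$ whose inhomogeneous term is $O(\Delta t^{p})$. First I would subtract \eqref{eq:continuous-version-of-MMS} from the integral form of \eqref{eq:interact_particle_system}, obtaining
\[
E_t^{i,N}=\int_0^t\Delta b_s^{i,N}\,\mathrm{d}s+\sum_{j_1=1}^{m}\int_0^t\Delta\sigma_s^{i,j_1,N}\,\mathrm{d}W_s^{i,j_1},
\]
with $\Delta b_s^{i,N}:=f(X_s^{i,N},\rho_s^{X,N})-\Gamma_1\!\big(f(Y_{\tau_n(s)}^{i,N},\rho_{\tau_n(s)}^{Y,N}),\Delta t\big)$ and $\Delta\sigma_s^{i,j_1,N}:=g_{j_1}(X_s^{i,N},\rho_s^{X,N})-\Gamma_g\!\big(\hat g_{j_1}(s,Y_{\tau_n(s)}^{i,N},\rho_{\tau_n(s)}^{Y,N}),\Delta t\big)$. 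Applying It\^o's formula to $|E_t^{i,N}|^{p}$ ($p\ge2$; this is the one place continuous time enters, consistent with \eqref{eq:continuous-version-of-MMS}), taking expectations — the local martingale term vanishes since Lemma \ref{lem:moment-bounds-of-numerical-solution}, the bound \eqref{remak:well_posedness_interact_particle} and the uniform boundedness in Assumption \ref{ass:Gamma-control-conditions} make the integrand integrable — and merging the Hessian contributions via $|(\Delta\sigma_s^{i,N})^{\top}E_s^{i,N}|^{2}\le|E_s^{i,N}|^{2}\sum_{j_1}|\Delta\sigma_s^{i,j_1,N}|^{2}$, I get
\[
\mathbb{E}\big[|E_t^{i,N}|^{p}\big]\le\mathbb{E}\!\int_0^t p\,|E_s^{i,N}|^{p-2}\Big(\big\langle E_s^{i,N},\Delta b_s^{i,N}\big\rangle+\tfrac{p-1}{2}\sum_{j_1}|\Delta\sigma_s^{i,j_1,N}|^{2}\Big)\mathrm{d}s .
\]

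Next I would split off the monotone part by inserting $f(Y_s^{i,N},\rho_s^{Y,N})$ and $g_{j_1}(Y_s^{i,N},\rho_s^{Y,N})$, defining the one-step remainders $R^{f,i,N}_s:=f(Y_s^{i,N},\rho_s^{Y,N})-\Gamma_1(f(Y_{\tau_n(s)}^{i,N},\rho_{\tau_n(s)}^{Y,N}),\Delta t)$ and $R^{g,i,j_1,N}_s:=g_{j_1}(Y_s^{i,N},\rho_s^{Y,N})-\Gamma_g(\hat g_{j_1}(s,\dots),\Delta t)$. For the monotone part I apply \textit{(A3)} with $(y,\bar y,\rho,\bar\rho)=(X_s^{i,N},Y_s^{i,N},\rho_s^{X,N},\rho_s^{Y,N})$, bound $\mathbb{W}_2^2(\rho_s^{X,N},\rho_s^{Y,N})\le\frac1N\sum_{k_1}|E_s^{k_1,N}|^2$ by the empirical coupling, and observe that, for $p$ in the admissible range (where $p-1$ is dominated by $p^{\ast}$ and by the moment requirements below), the monotone contribution to the integrand is $\le C|E_s^{i,N}|^{p-2}\big(|E_s^{i,N}|^2+\frac1N\sum_{k_1}|E_s^{k_1,N}|^2\big)$. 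For the remainders I use $\sum_{j_1}|\Delta\sigma_s^{i,j_1,N}|^2\le(1+\Delta t)\sum_{j_1}|g_{j_1}(X_s^{i,N},\rho_s^{X,N})-g_{j_1}(Y_s^{i,N},\rho_s^{Y,N})|^2+(1+\Delta t^{-1})\sum_{j_1}|R^{g,i,j_1,N}_s|^2$ together with Young's inequality; because the next step gives $R^{g}=O(\Delta t)$, the $\Delta t^{-1}|R^g|^2$ term is harmless, and the full remainder contribution is $\le C|E_s^{i,N}|^{p}+C|R^{f,i,N}_s|^{p}+C\sum_{j_1}|R^{g,i,j_1,N}_s|^{p}$. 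Finally, by Jensen, $|E_s^{i,N}|^{p-2}\frac1N\sum_{k_1}|E_s^{k_1,N}|^2\le\frac{p-2}{p}|E_s^{i,N}|^{p}+\frac2p\frac1N\sum_{k_1}|E_s^{k_1,N}|^{p}$, so after taking $\sup_i\mathbb{E}[\cdot]$ the interaction closes and
\[
\varphi(t)\le C\!\int_0^t\varphi(s)\,\mathrm{d}s+C\!\int_0^t\Big(\sup_i\mathbb{E}\big[|R^{f,i,N}_s|^{p}\big]+\sup_{i,j_1}\mathbb{E}\big[|R^{g,i,j_1,N}_s|^{p}\big]\Big)\mathrm{d}s .
\]
Gr\"onwall's lemma then reduces the theorem to the one-step consistency bounds $\sup_{s}\sup_i\mathbb{E}[|R^{f,i,N}_s|^{p}]\le C(1+(\mathbb{E}[|Y_0|^{\bar p}])^{\beta})\Delta t^{p}$ and $\sup_{s}\sup_{i,j_1}\mathbb{E}[|R^{g,i,j_1,N}_s|^{p}]\le C(1+(\mathbb{E}[|Y_0|^{\bar p}])^{\beta})\Delta t^{p}$.

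These are exactly Lemmas \ref{lem:e_and_f_difference} (which yields $\tilde p$) and \ref{lem:g_and_Gamma_g_difference} (which yields $\hat p$). To establish them I would expand $f(Y_s^{i,N},\rho_s^{Y,N})$, resp.\ $g_{j_1}(Y_s^{i,N},\rho_s^{Y,N})$, around the grid point $\tau_n(s)$ using \eqref{eq:continuous-version-of-MMS}--\eqref{eq:Gamma_g} and a first-order It\^o--Taylor (mean value) expansion in both the state and the measure arguments, the latter via the Lions derivatives $\partial_\rho f,\partial_\rho g_j$: for the diffusion remainder the leading $O(\sqrt{\Delta t})$ increment $g_{j_1}(Y_s^{i,N},\rho_s^{Y,N})-g_{j_1}(Y_{\tau_n(s)}^{i,N},\rho_{\tau_n(s)}^{Y,N})$ is matched term by term by the $\Gamma_3$- and $\Gamma_4$-iterated-integral corrections built into $\Gamma_g$ in \eqref{eq:Gamma_g}, leaving only $O(\Delta t)$; for the drift remainder the $O(\sqrt{\Delta t})$ fluctuation of $f(Y_s^{i,N},\rho_s^{Y,N})-f(Y_{\tau_n(s)}^{i,N},\rho_{\tau_n(s)}^{Y,N})$ enters only through a conditionally centered contribution, hence accumulates at order $\Delta t$. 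The polynomial prefactors produced by \textit{(H2)}, Assumption \ref{ass:Derivative-of-f-and-g-with-respect-to-y-mu} and the growth bounds \eqref{ineq:growth-condition-of-f}--\eqref{ineq:growth-condition-of-g-mu} are absorbed by the high moments of $Y^{i,N}$ from Lemma \ref{lem:moment-bounds-of-numerical-solution} (this is what pins down $\hat p,\tilde p$ in terms of $\bar p,\Theta,\bar\zeta$), while the pure taming discrepancies $|\Gamma_l(\mathds{F}_l(\cdot),\Delta t)-\mathds{F}_l(\cdot)|$ are of order $\Delta t^{\delta_l}|\mathds{F}_l(\cdot)|^{\gamma_l}$ with $\delta_1,\delta_2\ge1$, $\delta_3,\delta_4\ge\tfrac12$ by Assumption \ref{ass:Coefficient-comparison-conditions-of-Gamma1-Gamma4} — the exponent $\tfrac12$ being enough for $\Gamma_3,\Gamma_4$ since those contributions already carry a $\Delta t^{1/2}$-type iterated-integral factor — and the bounds of Assumption \ref{ass:Gamma-control-conditions} keep all taming-modified increments controlled throughout.

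The hard part will be Lemma \ref{lem:g_and_Gamma_g_difference}: obtaining genuine order-one (not order-$\tfrac12$) consistency of the diffusion remainder for a superlinearly growing $g$ that is only once differentiable. One must carry out the mean value / It\^o--Taylor expansion of $g_{j_1}(Y_s^{i,N},\rho_s^{Y,N})-g_{j_1}(Y_{\tau_n(s)}^{i,N},\rho_{\tau_n(s)}^{Y,N})$ in the measure component without second-order Lions derivatives, match the surviving first-order terms exactly with the iterated integrals in \eqref{eq:Gamma_g}, and bound the resulting superlinear error terms by moments of $Y^{i,N}$ that are only just large enough — which is precisely why the quantitative condition $\bar p\ge 2+(2\bar\zeta+1)\Theta$ is needed; the drift remainder additionally requires the centering/orthogonality argument that prevents an $O(\sqrt{\Delta t})$ loss. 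Once Lemmas \ref{lem:moment-bounds-of-numerical-solution}, \ref{lem:g_and_Gamma_g_difference} and \ref{lem:e_and_f_difference} are in hand, the monotonicity-based energy estimate, the empirical-Wasserstein treatment of the interaction, and the Gr\"onwall step are routine, and the bound holds for $p\in[2,\tilde p\wedge\hat p]$.
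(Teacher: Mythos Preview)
Your overall architecture (It\^o on $|E_t^{i,N}|^p$, monotonicity for the matched terms, empirical-Wasserstein coupling, Gr\"onwall) coincides with the paper's. The diffusion remainder is also treated correctly: Lemma~\ref{lem:g_and_Gamma_g_difference} really does give $\mathbb{E}\!\int_0^t|R^{g,i,j_1,N}_s|^p\,\mathrm{d}s\le C\Delta t^p$, because the $\Gamma_3,\Gamma_4$ iterated-integral corrections in $\Gamma_g$ cancel the leading $O(\Delta t^{1/2})$ fluctuation of $g_{j_1}$.

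The gap is in the drift remainder. Your Gr\"onwall inequality
\[
\varphi(t)\le C\!\int_0^t\varphi(s)\,\mathrm{d}s+C\!\int_0^t\sup_i\mathbb{E}\big[|R^{f,i,N}_s|^{p}\big]\,\mathrm{d}s+\cdots
\]
was obtained by applying Young's inequality to $|E_s|^{p-2}\langle E_s,R^{f,i,N}_s\rangle$. Once you do that, you are committed to bounding $\mathbb{E}[|R^{f,i,N}_s|^p]$ on its own, and this quantity is only $O(\Delta t^{p/2})$: the dominant piece of $R^{f,i,N}_s$ is $f(Y_s^{i,N},\rho_s^{Y,N})-f(Y_{\tau_n(s)}^{i,N},\rho_{\tau_n(s)}^{Y,N})$, which by \textit{(H2)} and Lemma~\ref{lem:The-difference-between-two-numerical-solutions} has $L^p$-norm of order $\Delta t^{1/2}$. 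No Milstein correction is applied to the drift, so there is no cancellation here. Your own remark that this fluctuation ``enters only through a conditionally centered contribution'' is the right instinct, but it is incompatible with the decomposition you wrote --- the centering is destroyed the moment you pass to $|R^{f,i,N}_s|^p$.

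This is precisely why Lemma~\ref{lem:e_and_f_difference} is \emph{not} a bound on $\mathbb{E}[|R^{f,i,N}_s|^p]$. It bounds the inner-product form
\[
\mathbb{E}\!\int_0^t |e_s^{i,N}|^{p-2}\big\langle e_s^{i,N},\,\Delta f_{s,\tau_n(s)}^{Y,Y}\big\rangle\,\mathrm{d}s
\]
directly, keeping the coupling with $e_s^{i,N}$. The paper isolates, via Lemma~\ref{lem:mean-value-lemma}, the dangerous $O(\Delta t^{1/2})$ pieces $V(\partial_y f^{(v)},\Gamma_g)$ and $\mathcal{V}(\partial_\rho f^{(v)},\Gamma_g)$ (stochastic integrals over $[\tau_n(s),s]$), and then applies It\^o's formula to $|e_s^{i,N}|^{p-2}e_s^{(v),i,N}$ on $[\tau_n(s),s]$ (see \eqref{ineq:ito_derivation}) to replace $e_s$ by $e_{\tau_n(s)}$ plus lower-order integrals. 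The leading product $|e_{\tau_n(s)}|^{p-2}e_{\tau_n(s)}^{(v)}\cdot V(\partial_y f^{(v)},\Gamma_g)$ then pairs an $\mathcal{F}_{\tau_n(s)}$-measurable factor with a martingale increment; after a further splitting (Lemma~\ref{lem:V_estimate}) one gains the missing $\Delta t^{1/2}$. The remaining cross terms ($K_{2,2,2}$--$K_{2,2,5}$) are handled by further It\^o products and repeated use of Lemmas~\ref{lem:V_estimate}--\ref{lem:mathcal_V_estimate}. Your proposal needs to retain the inner product with $e_s^{i,N}$ through this stage rather than Young it away; otherwise you recover only Euler-type order $\tfrac12$.
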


By combining this result with Proposition \ref{prop:propagation_of_chaos}, we establish a quantitative error bound for the Milstein-type approximation \eqref{eq:continuous-version-of-MMS} of the MV-SDEs \eqref{eq:MV_SDE}, under the assumption that the initial value has sufficiently high bounded moments.

\begin{corollary}  \label{cor:convergence-theorem}
Under the same conditions of Theorem \ref{thm:convergence-result-particle-scheme}, we have
\[
\sup_{t \in [0,\mathcal{T}]} \sup_{i \in \mathcal{I}_N} 
\mathbb{E}\big[\,|X_t^i - Y_t^{i,N}|^2\,\big] 
\le C\big( \eta_d(N) + \Delta t^2 \big),
\]
where $\eta_d(N)$ is defined in Proposition \ref{prop:propagation_of_chaos} and $C > 0$ is independent of $n$ and $N$.
\end{corollary}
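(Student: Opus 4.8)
The plan is to derive Corollary~\ref{cor:convergence-theorem} as a straightforward consequence of Theorem~\ref{thm:convergence-result-particle-scheme} and Proposition~\ref{prop:propagation_of_chaos} via the triangle inequality. First I would fix $t\in[0,\mathcal{T}]$ and $i\in\mathcal{I}_N$ and split the error into the \emph{propagation-of-chaos error} and the \emph{discretization error}:
\begin{equation*}
\mathbb{E}\big[\,|X_t^i - Y_t^{i,N}|^2\,\big]
\le 2\,\mathbb{E}\big[\,|X_t^i - X_t^{i,N}|^2\,\big]
+ 2\,\mathbb{E}\big[\,|X_t^{i,N} - Y_t^{i,N}|^2\,\big],
\end{equation*}
using $|a+b|^2\le 2|a|^2+2|b|^2$. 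Here $X_t^i$ solves the NIPS~\eqref{eq:non_interacting_particles_system}, $X_t^{i,N}$ solves the IPS~\eqref{eq:interact_particle_system}, and $Y_t^{i,N}$ is the continuous-time Milstein-type approximation~\eqref{eq:continuous-version-of-MMS}.

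Next I would bound each term separately. For the first term, Proposition~\ref{prop:propagation_of_chaos} gives $\sup_{t}\sup_i \mathbb{E}[|X_t^i - X_t^{i,N}|^2]\le C\,\eta_d(N)$, provided Assumption~\ref{ass:assumptions_for_MV_coefficients} holds with $\bar p>4$; since the hypotheses of Theorem~\ref{thm:convergence-result-particle-scheme} already require $\bar p$ large (indeed $\bar p\ge 2+(2\bar\zeta+1)\Theta$ with $\Theta\ge 3\gamma\ge 6$, so $\bar p>4$ automatically), this is available. For the second term, I would apply Theorem~\ref{thm:convergence-result-particle-scheme} with $p=2$, which requires $2\le \tilde p\wedge\hat p$ — this is implicit in the statement, as the admissible range $[2,\tilde p\wedge\hat p]$ is assumed nonempty — to obtain $\sup_t\sup_i\mathbb{E}[|X_t^{i,N}-Y_t^{i,N}|^2]\le C\big(1+(\mathbb{E}[|Y_0|^{\bar p}])^\beta\big)\Delta t^2$. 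Since $Y_0\sim\mathscr{L}_{X_0}$ and $\mathbb{E}[|X_0|^{\bar p}]<\infty$ by \textit{(A1)}, the prefactor $C(1+(\mathbb{E}[|Y_0|^{\bar p}])^\beta)$ is a finite constant independent of $n$ and $N$, which can be absorbed into $C$.

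Combining the two bounds and taking the supremum over $t\in[0,\mathcal{T}]$ and $i\in\mathcal{I}_N$ yields
\begin{equation*}
\sup_{t\in[0,\mathcal{T}]}\sup_{i\in\mathcal{I}_N}\mathbb{E}\big[\,|X_t^i - Y_t^{i,N}|^2\,\big]
\le C\big(\eta_d(N)+\Delta t^2\big),
\end{equation*}
with $C>0$ independent of $n$ and $N$, as claimed. There is essentially no obstacle here: the corollary is a direct combination of two already-established results, and the only point requiring a line of justification is the compatibility of the moment hypotheses — namely that the $\bar p>4$ requirement of Proposition~\ref{prop:propagation_of_chaos} and the nonemptiness of the convergence range $[2,\tilde p\wedge\hat p]$ in Theorem~\ref{thm:convergence-result-particle-scheme} are both guaranteed under the standing assumptions, so that $p=2$ is an admissible exponent and all constants stay finite and uniform in $N$ and $n$.
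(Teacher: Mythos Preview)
Your proposal is correct and follows exactly the approach the paper intends: the corollary is stated immediately after the sentence ``By combining this result with Proposition~\ref{prop:propagation_of_chaos}, we establish a quantitative error bound\ldots'', with no separate proof given, so the triangle-inequality splitting into the PoC error (Proposition~\ref{prop:propagation_of_chaos}) and the discretization error (Theorem~\ref{thm:convergence-result-particle-scheme} with $p=2$) is precisely what is meant. Your remarks on why $\bar p>4$ and $2\in[2,\tilde p\wedge\hat p]$ hold under the standing assumptions are accurate and fill in the only points the paper leaves implicit.
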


\subsection{Some examples of Milstein-type schemes}
\label{subsec:examples_of_scheme}
We present several examples with specified $\Gamma_{l}$ for $l=1,2,3,4$ in \eqref{eq:modified_Milstein_method}. These examples demonstrate possible forms of the operators $\Gamma_{l}$ for $l=1,2,3,4$ within the proposed framework.
 \begin{itemize}
 \item Tanh Milstein (TanhM) 
  \begin{equation}
  \label{ex-eq:tanh-misltein}
 \Gamma_{l}\big(\mathds{F}_l(\cdot),\Delta t\big) = \Delta t^{-1} \tanh\big(\Delta t\,\mathds{F}_l(\cdot) \big), \quad l=1,2,3,4.
\end{equation}
\item Sine Milstein (SineM)  
 \begin{equation}
     \label{ex-eq:sin-misltein}
\Gamma_{l}\big(\mathds{F}_l(\cdot),\Delta t\big) = \Delta t^{-1} \sin\big(\Delta t\,\mathds{F}_l(\cdot) \big), \quad l=1,2,3,4.
 \end{equation}

\item Tamed Milstein (TameM)  
  \begin{equation}
  \label{ex:tamed-euler-scheme}
\Gamma_{l}\big(\mathds{F}_l(\cdot),\Delta t\big) = \frac{\mathds{F}_l(\cdot)}{1+\Delta t\left|\mathds{F}_l(\cdot)\right|}, \quad  l=1,2,3,4.
 \end{equation}

\item Mixed Milstein (MixM) 
  \begin{align}
  \label{ex-eq:mix-milstein-method}
\Gamma_{1}\big(\mathds{F}_1(\cdot),\Delta t\big) &= \Delta t^{-1} \tanh\big(\Delta t\,\mathds{F}_1(\cdot) \big),  \quad 
         \Gamma_{2}\big(\mathds{F}_2(\cdot),\Delta t\big) = \Delta t^{-1} \sin\big(\Delta t\,\mathds{F}_2(\cdot) \big), \notag \\
        \Gamma_{3}\big(\mathds{F}_3(\cdot),\Delta t\big) &= \frac{\mathds{F}_3(\cdot)}{1+\Delta t\left|\mathds{F}_3(\cdot)\right|},   \quad 
        \Gamma_{4}\big(\mathds{F}_4(\cdot),\Delta t\big) = \Delta t^{-1} \tanh\big(\Delta t\,\mathds{F}_4(\cdot) \big). 
    \end{align}
\end{itemize}
For \eqref{ex-eq:tanh-misltein}, based on $|\tanh(y)| \le |y|$ and $|\tanh(y)| \le 1 $, we observe the following
$$
\left|\Gamma_{l}\left(\mathds{F}_l(\cdot),\Delta t\right)\right| \le \left|\mathds{F}_l(\cdot)\right|, \quad  \left|\Gamma_{l}\left(\mathds{F}_l(\cdot),\Delta t\right)\right| \le \Delta t ^{-1}.
$$
This implies that Assumption \ref{ass:Gamma-control-conditions} is valid with $\zeta_{l}=1,\,l=1,2,3,4$. Furthermore, applying the property of the hyperbolic tangent function for any $0\le \theta \le 1$, 
 $|y-\tanh(y)|\le  |y|^{3-2\theta}$, we arrive at
$$
\left|\Gamma_{l}(\mathds{F}_l(\cdot),\Delta t)-\mathds{F}_l(\cdot)\right|  =\Delta t^{-1} \left|\tanh\left(\Delta t \mathds{F}_l(\cdot)\right)-\Delta t \mathds{F}_l(\cdot)\right|   \le \Delta t |\mathds{F}_l(\cdot)|^{2}.
$$
Thus, Assumption \ref{ass:Coefficient-comparison-conditions-of-Gamma1-Gamma4} is fulfilled with $\delta_{l}=1$ and $\gamma_{l}=2,\,l=1,2,3,4$.

Turning to the Sine Milstein method \eqref{ex-eq:sin-misltein}, it is straightforward to verify that the assumptions \ref{ass:Gamma-control-conditions} and \ref{ass:Coefficient-comparison-conditions-of-Gamma1-Gamma4} hold with $\zeta_l = 1$, $\delta_l = 1$, and $\gamma_l = 2$ for $l = 1, 2, 3, 4$, following the similar arguments as for \eqref{ex-eq:tanh-misltein}.

For the Tamed Milstein method \eqref{ex:tamed-euler-scheme}, we verify that
$$
|\Gamma_{l}\left(\mathds{F}_l(\cdot),\Delta t\right)| \le \left|\mathds{F}_l(\cdot)\right|, \quad  |\Gamma_{l}\left(\mathds{F}_l(\cdot),\Delta t \right)| \le \frac{\left|\mathds{F}_l(\cdot)\right|}{\Delta t\left|\mathds{F}_l(\cdot)\right|} \le \Delta t^{-1}.
$$
This shows that Assumption \ref{ass:Gamma-control-conditions} holds with $\zeta_l = 1$ for $l = 1, 2, 3, 4$.
Moreover, according to \eqref{ex:tamed-euler-scheme}, we have
$$
\left|\Gamma_{l}\left(\mathds{F}_l(\cdot),\Delta t\right)-\mathds{F}_l(\cdot)\right|^{2}  \le  \frac{|\mathds{F}_l(\cdot)|^{2}\Delta t}{1+\Delta t |\mathds{F}_l(\cdot)|}  \le \Delta t  |\mathds{F}_l(\cdot)|^{2},
$$
i.e. Assumption \ref{ass:Coefficient-comparison-conditions-of-Gamma1-Gamma4} is satisfied with $\delta_{l}=1,$ and $\gamma_{l}=2,~l=1,2,3,4$.

Finally, in the case of the Mixed Milstein method \eqref{ex-eq:mix-milstein-method}, following a similar verification procedure as for the other methods, we confirm that Assumptions \ref{ass:Gamma-control-conditions} and \ref{ass:Coefficient-comparison-conditions-of-Gamma1-Gamma4} are satisfied with $\zeta_l = 1$, $\delta_l = 1$, and $\gamma_l = 2$ for $l = 1, 2, 3, 4$.

\section{Numerical experiments} \label{sec:numeri-experi}
This section tests the proposed Milstein-type numerical scheme \eqref{eq:modified_Milstein_method} on three models: the mean-field double well dynamics model, the Cucker-Smale flocking model \cite{erban2016cucker,chen2022flexible} and the FitzHugh-Nagumo model \cite{baladron2012mean}. These models feature superlinear growth in both the drift and diffusion coefficients with respect to the state variable. We present numerical results of the TanhM \eqref{ex-eq:tanh-misltein}, SineM \eqref{ex-eq:sin-misltein}, TamedM \eqref{ex:tamed-euler-scheme} and MixM \eqref{ex-eq:mix-milstein-method} methods, introduced in Section \ref{subsec:examples_of_scheme}.

To evaluate the applicability of the proposed schemes with limited computational resources, we include convergence tests with smaller particle numbers, assessing the robustness and sensitivity to sampling errors. When the number of particles $N$ is relatively small, a more accurate simulation of our numerical schemes \eqref{eq:modified_Milstein_method} requires retaining and computing the final term in each scheme. In all simulations, the corresponding Lions derivative is taken from Examples 1 and 4 in Section 5.2.2 of the monograph \cite{carmona2018probabilistic}. Off-diagonal iterated stochastic integrals are approximated using the Wiktorsson method \cite{wiktorsson2001joint}, with $K = 20$ terms in the truncated series expansion, which yields a mean-square error of order $\mathcal{O}(\Delta t^2 / K^2)$.

Unless otherwise stated, the number of particles is $N = 1000$. The mean-square error (MSE) at the terminal time $\mathcal{T}$ is computed as:
$$
{\rm MSE} \approx \left(\frac{1}{N}\sum_{i=1}^{N}\left|Y_{\mathcal{T}}^{i,N,\delta t}-Y_{\mathcal{T}}^{i,N,\Delta t} \right|^{2}\right)^{\frac{1}{2}},
$$
where $Y_{\mathcal{T}}^{i,N,\delta t}$, $Y_{\mathcal{T}}^{i,N,\Delta t}$ represent the numerical solutions for the $i$-th particle with step sizes $\Delta t$ and $\delta t$, respectively. 
\vspace{-0.3em}
\begin{example} \label{exam:double-well-model}
Consider the double-well dynamics in the form of
\begin{equation} 
\left\{\begin{array}{l}  \label{exam:num-double-well model}
\mathrm{d} X_{t}=  \left( \lambda_{1} X_{t}\left(1-X_{t}^{2}\right)+\lambda_{2 }\mathbb{E}\left[X_{t}\right] \right)\mathrm{d} t +  \left(\mu_{1}(1-X_{t}^{2})+\mu_{2}\int_{\mathbb{R}}\sin(X_{t}-z)\mathscr{L}_{X_{t}}(dz)\right) \mathrm{d} W_t,    \\
X_0=x_{0},
\end{array}\right.
\end{equation}
where $t\in[0,\mathcal{T}]$. 
\end{example}
Take two sets of model parameters:
\begin{itemize}
    \item \textbf{Case 1:} $\lambda_{1}=40$, $\lambda_{2} = 4$, $\mu_{1}=0.3$, $\mu_{2}=2$, and $x_{0}\sim \mathcal{N}(0,1)$;
    \item \textbf{Case 2:} $\lambda_{1}=5$, $\lambda_{2} = 1$,  $\mu_{1}=0.1$, $\mu_{2}=0.1$ and $x_{0}=0$.
\end{itemize}
%%%%%%%%%%%%%%%%%%%%%%%%%%%%%%%%%%%%%%%%%%%%%%%%%%%%%%%%%%%
All cases in Example \ref{exam:double-well-model} satisfy Assumptions \ref{ass:assumptions_for_MV_coefficients} and \ref{ass:polynomial-growth-of-f} with parameters $\bar{p} = 426$, $p^{*} = 4$ and $\gamma = 2$. In \textbf{Case 1}, we simulate $M = 500$ trajectories of a randomly chosen particle governed by equation \eqref{exam:num-double-well model}, using both the classical Milstein method \eqref{eq:classi_Milstein_method} and Milstein-type schemes \eqref{eq:modified_Milstein_method}. The simulations are conducted over the interval $[0, 1]$ with a time step size $\Delta t = 2^{-6}$ and $N = 1000$ particles. For simplicity, we omit the last term in the numerical schemes \eqref{eq:modified_Milstein_method} in all experiments unless otherwise stated, as it converges to zero in the mean-square sense as $N \to \infty$.
 
As shown in the left panel of Figure \ref{fig:1D_single_parti_M_realiza}, the classical Milstein method exhibits divergence, while Milstein-type schemes remain stable and converge toward the neighborhoods of $1$ or $-1$.

\begin{figure}[H]
\begin{minipage}[t]{0.5\linewidth} 
\centering
\includegraphics[width=7.5cm,height=5cm]{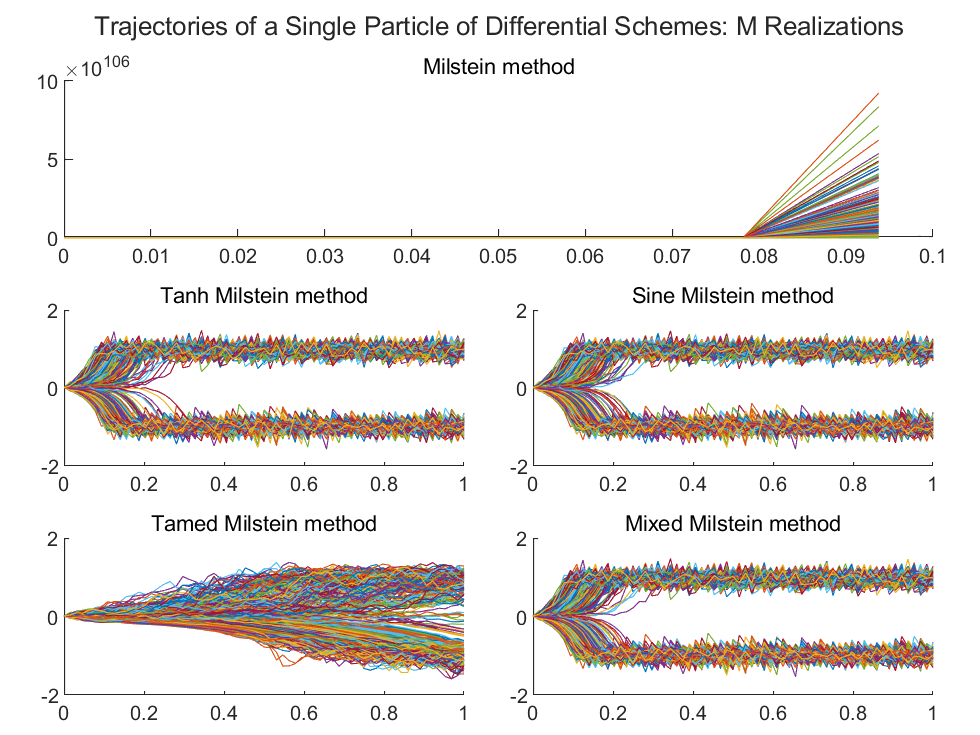}   
\end{minipage}
\hfill
\begin{minipage}[t]{0.5\linewidth}  
\centering
\includegraphics[width=7.5cm,height=5cm]{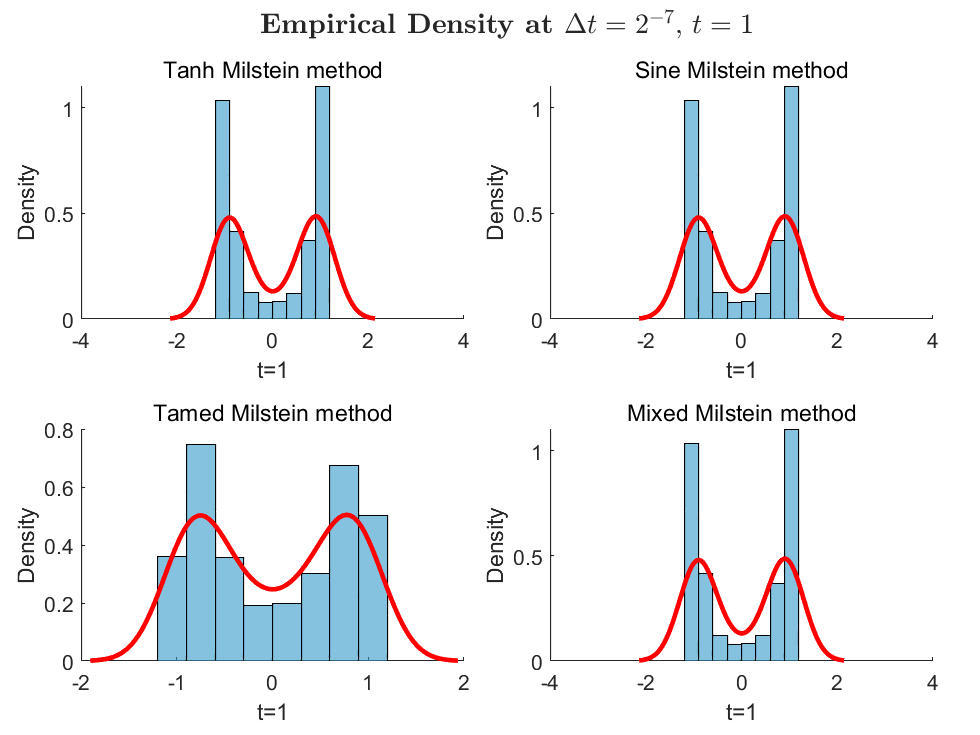} 
\end{minipage}
\vspace{-3em}
\caption{Trajectories of a single particle over $M=500$ realizations with $N=1000$ (left), and the empirical distribution at time $\mathcal{T}=1$ (right).}
\label{fig:1D_single_parti_M_realiza}
\end{figure}
\vspace{-1em}

The right panel of Figure \ref{fig:1D_single_parti_M_realiza} presents the empirical density at $\mathcal{T} = 1$ for \textbf{Case 2}, computed by Milstein-type schemes. %The absence of blow-up at the final time indicates the numerical stability of all schemes. 
The bimodal distribution observed at the final time suggests that the schemes accurately capture the key features of the distribution.

%%%%%%%%%%%%%%%%%%%%%%%%%%%%%%%%%%%%%%%%%%%%%%%%%
\begin{figure}[H]
\begin{minipage}[t]{0.5\linewidth} 
\centering
\includegraphics[width=7.5cm,height=5cm]{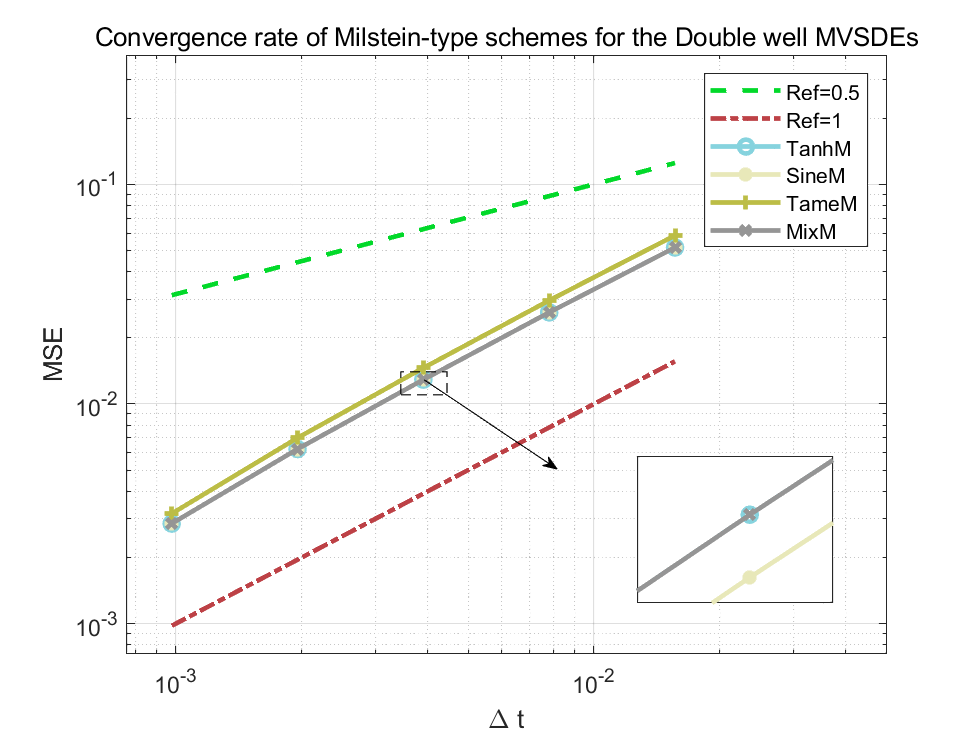}   
%\caption{Example \ref{exam:3/2-volatility-model}}
\end{minipage}
\hfill
\begin{minipage}[t]{0.5\linewidth}  
\centering
\includegraphics[width=7.5cm,height=5cm]{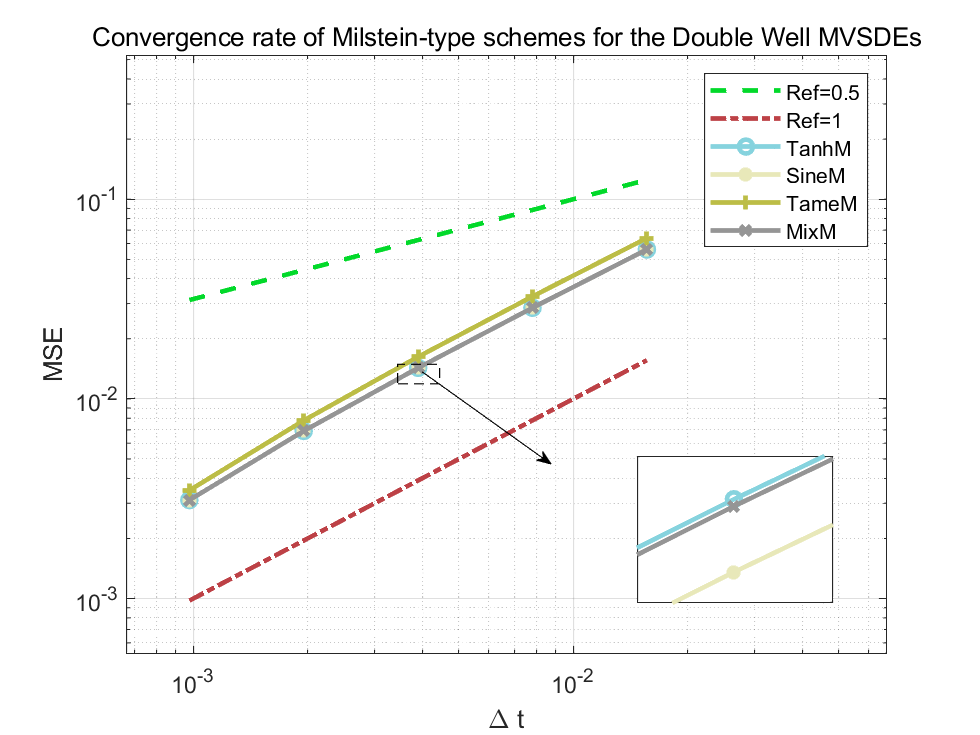} 
\end{minipage}
\vspace{-3em}
\caption{Convergence order of Milstein-type schemes for the double well model: $N=1000$ (left), $N=20$ (right).}
\label{fig:conver-rate-Mil-scheme-1D}
\end{figure}
\vspace{-1em}
%%%%%%%%%%%%%%%%%%%%%%%%%%%%%%%%%%%%%%%%%%%%%%%%%%
Next, we examine the convergence order of the numerical schemes \eqref{eq:modified_Milstein_method} in \textbf{Case 2}. Since the analytical solution of the interacting particle system is not available, we use the numerical solution with a fine step size $\Delta t = 2^{-12}$ as the reference. Figure \ref{fig:conver-rate-Mil-scheme-1D} shows the log-log convergence plots for Milstein-type schemes with $N = 1000$ and $N = 20$, using step sizes $\Delta t = \{2^{-6}, 2^{-7}, 2^{-8}, 2^{-9}, 2^{-10}\}$. Reference lines with slopes $\frac12$ and $1$ are included for comparison.

The convergence curves of the four methods are nearly indistinguishable; therefore, we utilize the midpoint to illustrate the differences among them, as highlighted in the lower right corner. As shown in Figure \ref{fig:conver-rate-Mil-scheme-1D}, the convergence rate is close to $1$ for both small and large particle numbers ($N = 1000$ and $N = 20$), demonstrating consistent performance across different particle numbers $N$.
\vspace{-0.5em}
%%%%%%%%%%%%%%%%%%%%%%%%%%%%%%%%%%%%%%%%%%%%%%%%%%%%%%%%%
\begin{example}
\label{exa:Cucker-Smale-model}
For the two-dimensional Cucker-Smale flocking model, the dynamics are described by the MV-SDEs \eqref{eq:MV_SDE} with the drift and diffusion given by
\begin{equation} \label{eq:num-exam-Cuck-Sma-model}
\begin{gathered}
f\left(X_t, \mathscr{L}_{X_t}\right)=\binom{-\lambda_{1}v_t^3+1 +\lambda_{2} \int_{\mathbb{R}}\left(v_t-z\right) \mathscr{L}_{v_t}\left(dz\right)}{v_t}, \; \; g\left(X_t, \mathscr{L}_{X_t}\right)=\binom{\sigma_{1}v_t^{2}+\sigma_{2} \int_{\mathbb{R}}\left(v_t-z\right) \mathscr{L}_{v_t}\left(dz\right)}{0},
\end{gathered}
\end{equation}
where $X_{t}=(v_{t},x_{t}),\; t\in[0,1]$. Two cases are tested:  
\vspace{-0.8em}
\begin{itemize}
    \item \textbf{Case 1:} $\lambda_{1}=139.5$, $\lambda_{2} = -30$, $\sigma_{1} = 0.8$, $\sigma_{2}=100$, and $X_{0}\sim\mathcal{N}(\mu,\Sigma)$,\\ \hbox{where} $\mu = \begin{bmatrix} 20 \\ 30 \end{bmatrix}$ \hbox{and} $\Sigma = \begin{bmatrix} 4 & 3.5 \\ 3.5 & 4 \end{bmatrix}$;
    \item \textbf{Case 2:} $\lambda_{1}=1$, $\lambda_{2} = -0.5$, $\sigma_{1} = 0.01$, $\sigma_{2}=0.01$, and $X_{0}\sim (\mathcal{N}(0,1),\mathcal{N}(0,1)) $.
\end{itemize}
\end{example}
\vspace{-0.5em}
%%%%%%%%%%%%%%%%%%%%%%%%%%%%%%%%%%%%%%%%%%%%%%%%%%%%%%
\begin{figure}[H]
\begin{minipage}[t]{0.5\linewidth} 
\centering
\includegraphics[width=7.5cm,height=5cm]{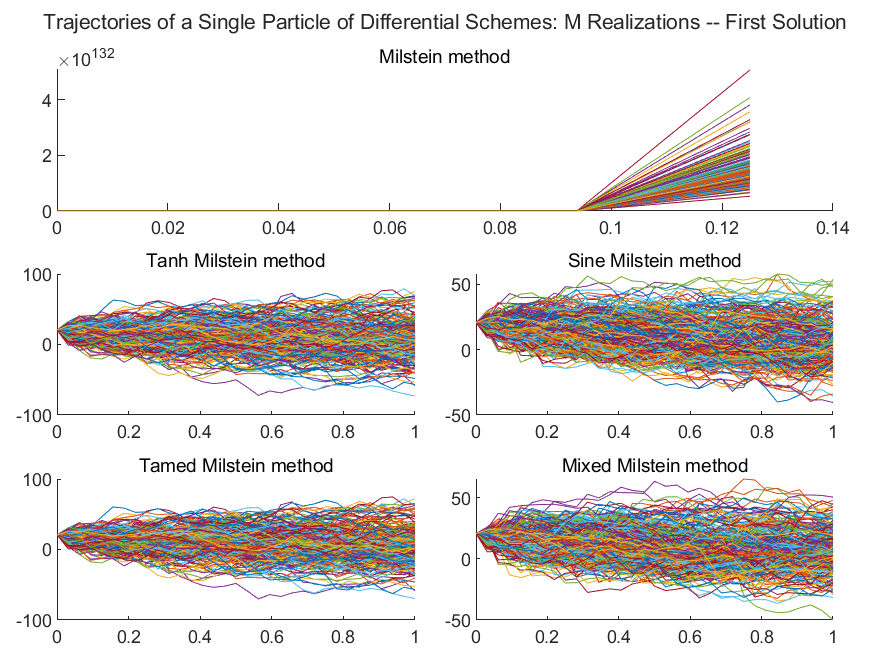}   
%\caption{Example \ref{exam:3/2-volatility-model}}
\end{minipage}
\hfill
\begin{minipage}[t]{0.5\linewidth}  
\centering
\includegraphics[width=7.5cm,height=5cm]{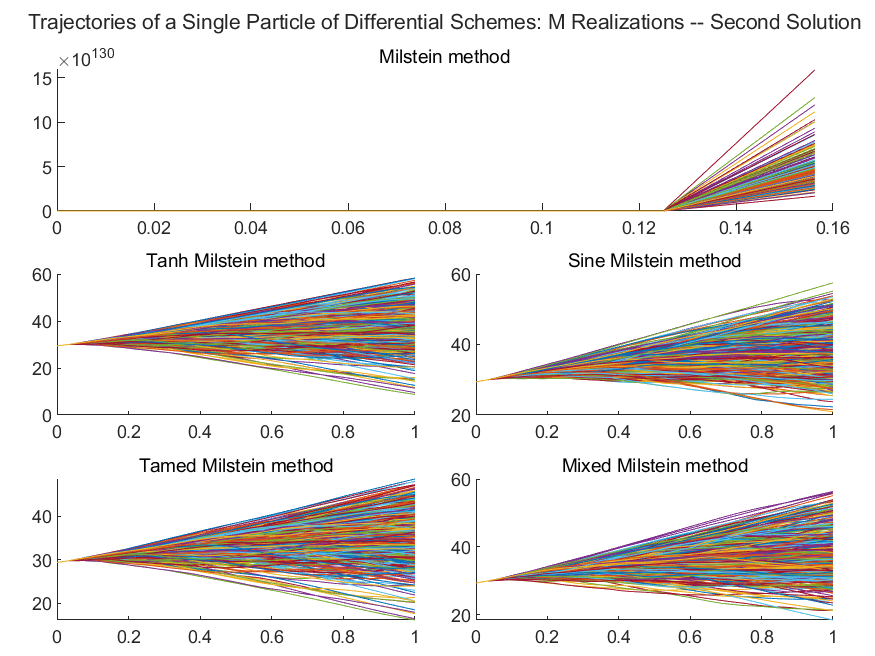} 
\end{minipage}
\vspace{-3em}
\caption{Trajectories of a single particle (2D output) for the Cucker-Smale Flocking model $N = 1000$, $M = 500$ realizations. Left: first component; Right: second component.}
\label{fig:traje-single-parti-N-large-2D}
\end{figure}
%%%%%%%%%%%%%%%%%%%%%%%%%%%%%%%%%%%%%%%%%%%%%%%%
We verify that the coefficients in \eqref{eq:num-exam-Cuck-Sma-model} satisfy Assumptions \ref{ass:assumptions_for_MV_coefficients} and \ref{ass:polynomial-growth-of-f} as before. For \textbf{Case 1}, we simulate $M=500$ trajectories of a randomly selected particle using the classical Milstein method and the Milstein-type methods. Figure \ref{fig:traje-single-parti-N-large-2D} shows the component-wise trajectories for $N=1000$, where the classical method diverges while the proposed schemes remain stable.

For \textbf{Case 2}, Figure \ref{fig:2D_pdf_diff_step_size} shows the empirical densities at $t=1$ with step sizes $\Delta t=2^{-1}$ and $2^{-8}$. The schemes remain stable even for relatively large step $\Delta t=2^{-1}$, while relatively small step $\Delta t=2^{-8}$ yields a more accurate distribution. Figure \ref{fig:2D_pdf_differ_T} further illustrates the density evolution at $t=2$ and $t=8$, where the distribution stabilizes over time.
%%%%%%%%%%%%%%%%%%%%%%%%%%%%%%%%%%%%%%%%%%%%%%%%%
\vspace{-1em}
\begin{figure}[H]
\begin{minipage}[t]{0.5\linewidth} 
\centering
\includegraphics[width=7.5cm,height=5cm]{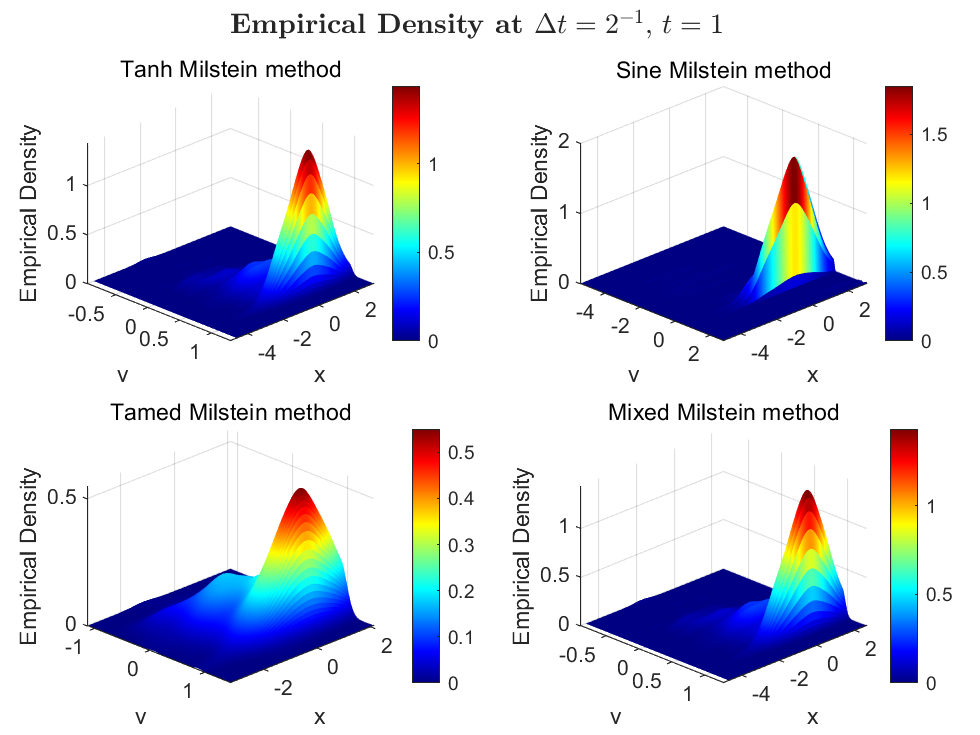}   
%\caption{Example \ref{exam:3/2-volatility-model}}
\end{minipage}
\hfill
\begin{minipage}[t]{0.5\linewidth}  
\centering
\includegraphics[width=7.5cm,height=5cm]{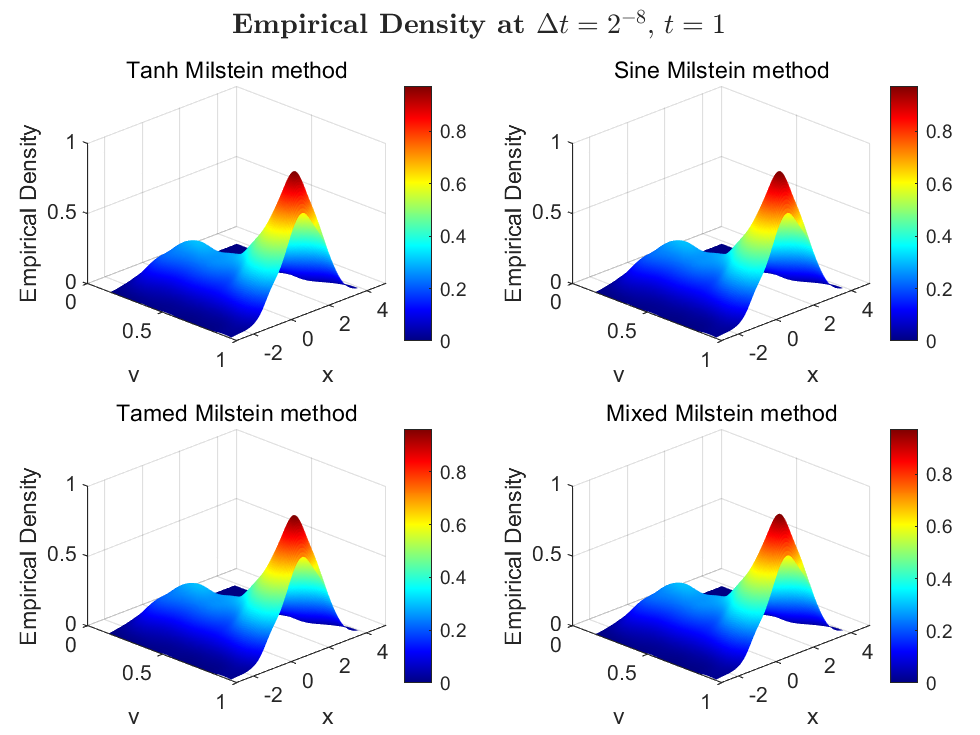} 
\end{minipage}
\vspace{-3em}
\caption{Empirical densities at $t = 1$ for Milstein-type schemes with $\Delta t = 2^{-1}$ (left) and $\Delta t = 2^{-8}$ (right).}
\label{fig:2D_pdf_diff_step_size}
\end{figure}
%%%%%%%%%%%%%%%%%%%%%%%%%%%%%%%%%%%%%%%%%%
% We also present empirical density plots at later times in Figure \ref{fig:2D_pdf_differ_T}, specifically at $t = 2$ (left) and $t = 8$ (right). The density at $t = 8$ is representative of the long-time behavior, as the empirical densities at larger times remain qualitatively similar. This observation suggests that our numerical schemes are capable of capturing the long-time dynamics of the underlying MV-SDEs.
%%%%%%%%%%%%%%%%%%%%%%%%%%%%%%%%%%%%%%%%%%%%%
\begin{figure}[H]
\begin{minipage}[t]{0.5\linewidth} 
\centering
\includegraphics[width=7.5cm,height=5cm]{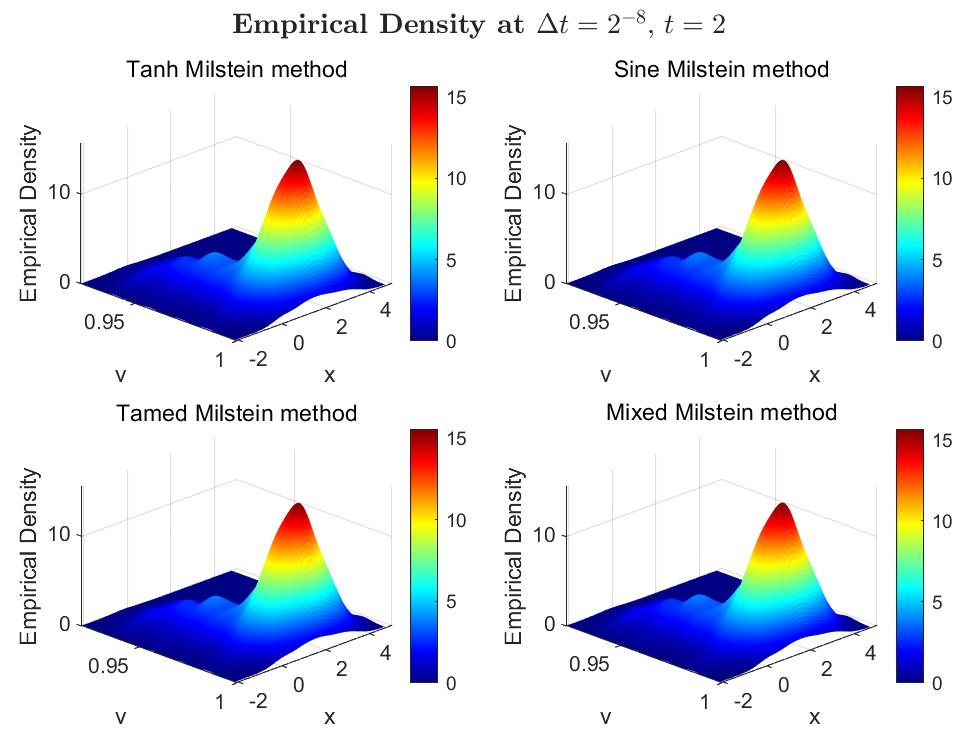}   
%\caption{Example \ref{exam:3/2-volatility-model}}
\end{minipage}
\hfill
\begin{minipage}[t]{0.5\linewidth}  
\centering
\includegraphics[width=7.5cm,height=5cm]{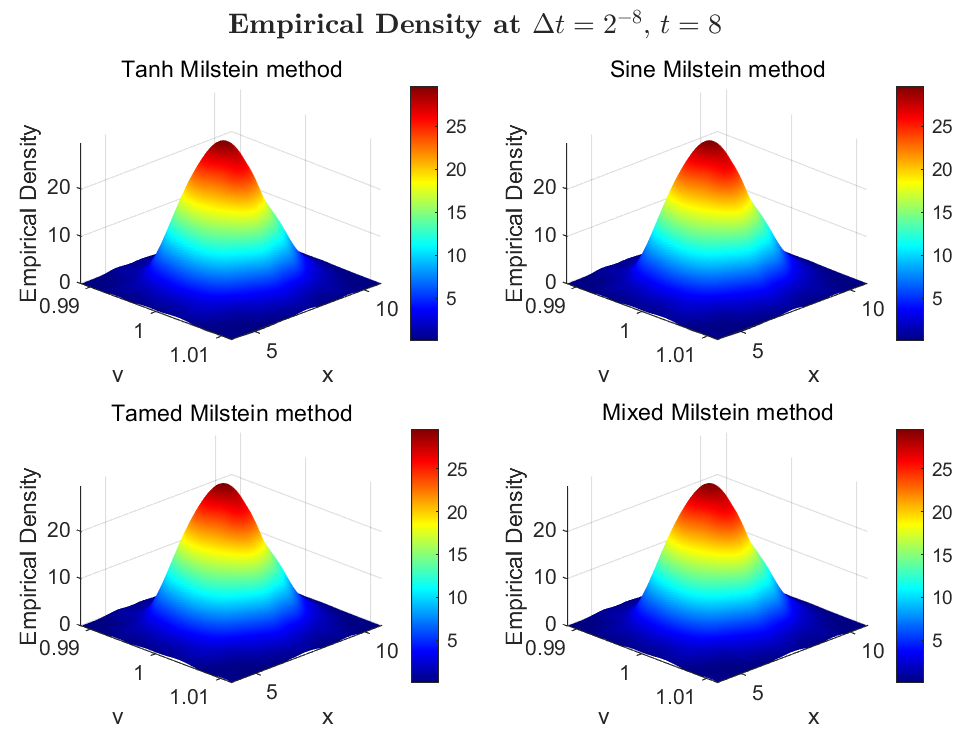} 
\end{minipage}
\vspace{-3em}
\caption{Empirical densities at $t = 2$ (left) and $t = 8$ (right) for Milstein-type schemes with $\Delta t = 2^{-8}$.}
\label{fig:2D_pdf_differ_T}
\end{figure}
\vspace{-1.5em}
%%%%%%%%%%%%%%%%%%%%%%%%%%%%%%%%%%%%%%%%%%%
We now examine the convergence order of our numerical schemes in the \textbf{Case 2}. Following Example \ref{exam:num-double-well model}, the solution with step size $\Delta t = 2^{-12}$ is used as the reference. The tested step sizes are $\Delta t = \{2^{-6}, 2^{-7}, 2^{-8}, 2^{-9}, 2^{-10}\}$. In the log-log plot, we also display reference lines with slopes $0.5$ and $1$ for comparison. In Figure \ref{fig:conver-rate-Mil-scheme-2D}, the mean-square errors for $N=1000$ and $N=20$ are reported and all error curves are observed to be parallel to the slope $1$ reference line, thereby confirming the theoretical results.
%%%%%%%%%%%%%%%%%%%%%%%%%%%%%%%%%%%%
\begin{figure}[H]
\begin{minipage}[t]{0.5\linewidth} 
\centering
\includegraphics[width=7.5cm,height=5cm]{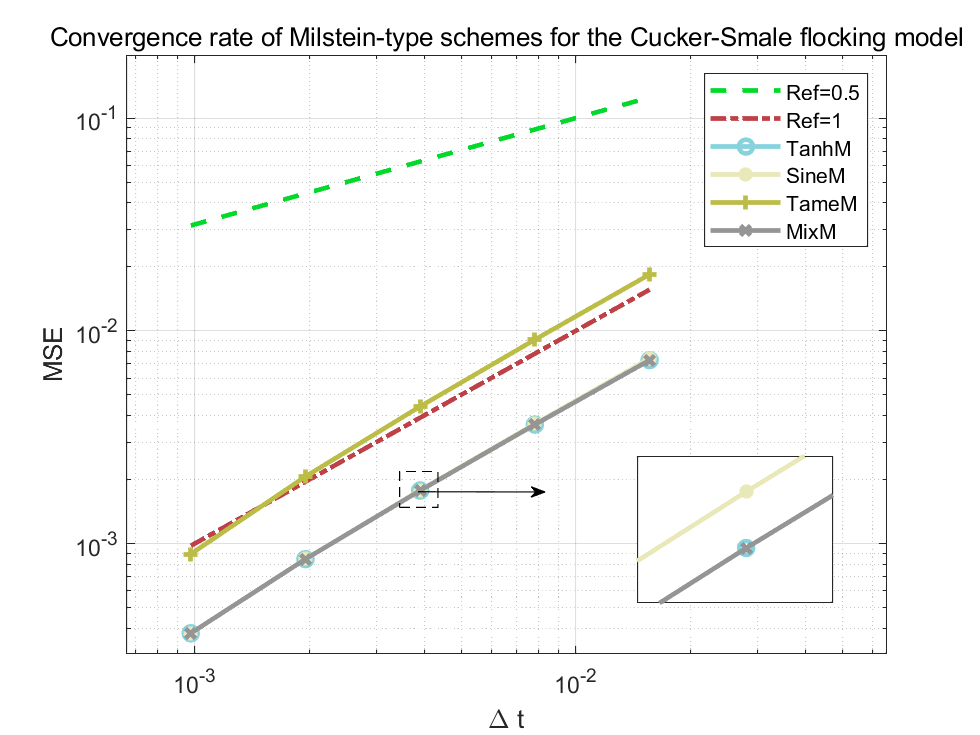}   
\end{minipage}
\hfill
\begin{minipage}[t]{0.5\linewidth}  
\centering
\includegraphics[width=7.5cm,height=5cm]{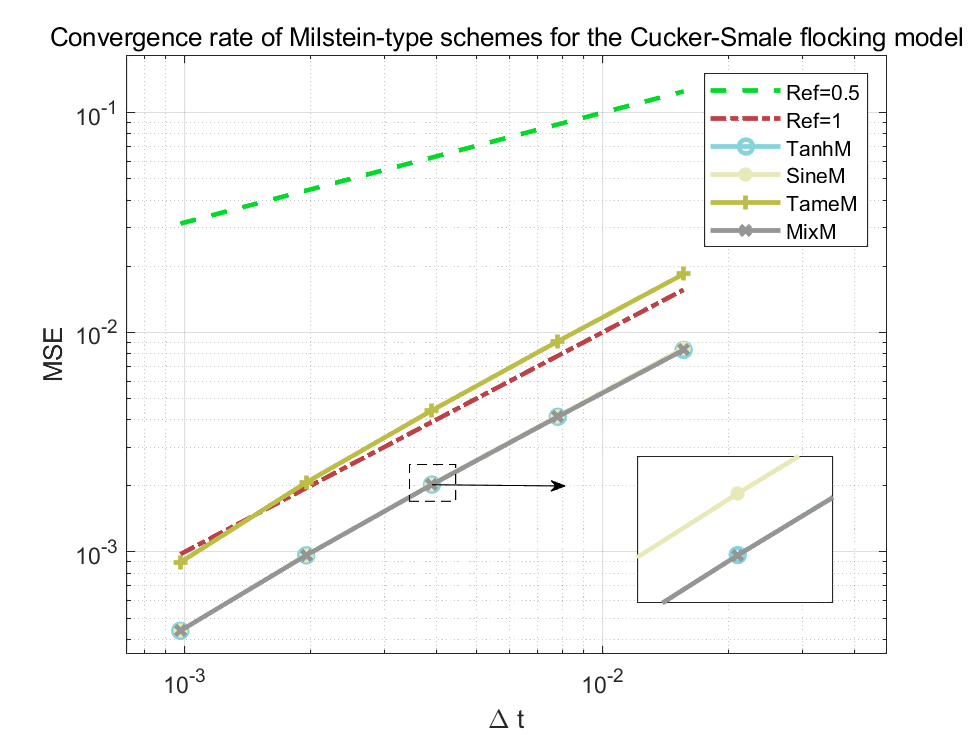} 
\end{minipage}
\vspace{-3em}
\caption{Convergence order of Milstein-type schemes for the Cucker-Smale flocking model: $N=1000$ (left), $N=20$ (right).}
\label{fig:conver-rate-Mil-scheme-2D}
\end{figure}
\vspace{-0.5em}
%%%%%%%%%%%%%%%%%%%%%%%%%%%%%%%%%%%%%%%%%%%%%%%%%%%%%%
We proceed to examine the convergence of the proposed numerical schemes for MV-SDEs driven by multidimensional Brownian motion.
\begin{example}
\label{exam:FHN-model-numerical}
Consider the FitzHugh-Nagumo model, which corresponds to MV-SDEs \eqref{eq:MV_SDE} with 
\begin{align} \label{eq:Fitz-Nagu-dift}
f(X_{t}, \mathscr{L}_{X_{t}}) & :=\left(\begin{array}{c}
x_1- 35x_1^3 -x_2+I-\int_{\mathbb{R}^3} J\left(x_1-V_{\text{rev}}\right) z \,\mathscr{L}_{x_3}(\mathrm{d}z) \\ 
c\left(x_1+a-b x_2\right) \\ 
a_r \frac{T_{\max }\left(1-x_3\right)}{1+\exp \left(-\lambda\left(x_1-V_T\right)\right)}-a_d x_3
\end{array}\right),
\end{align}
\begin{align} \label{eq:Fitz-Nagu-diffusion}
g(X_{t}, \mathscr{L}_{X_{t}}) & :=\left(\begin{array}{ccc}
\sigma_{\text{ext}} & 0 & -\int_{\mathbb{R}^3} \sigma_J\left(x_1-V_{\text{rev}}\right) z \,\mathscr{L}_{x_3}(\mathrm{d}z) \\ 
0 & 0 & 0 \\
0 & \sigma_{32}(x) & 0
\end{array}\right),
\end{align}
where $X_{t}=(x_{1},x_{2},x_{3})$, $X_{0}=(V_{0},w_{0},x_{0})$, and
$$
\sigma_{32}(x):=\mathds{1}_{\left\{x_3 \in(0,1)\right\}} \sqrt{ \frac{a_r T_{\max }\left(1-x_3\right)}{1+\exp \left(-\lambda\left(x_1-V_T\right)\right)}+a_d x_3} ~\Gamma \exp \left(-\Lambda /\left(1-\left(2 x_3-1\right)^2\right)\right),
$$
$\mathcal{T}=2$ is chosen as the final time, and the parameters have the values
$$
\begin{array}{ccccccc}
V_0=0  & a=0.7 & b=0.8 & c=0.08 & I=0.5 & \sigma_{e x t}=0.5 \\
w_0=1  & V_{r e v}=1 & a_r=1 & a_d=1 & T_{\max }=1 & \lambda=0.2 \\
x_0=0.5 & J=1 & \sigma_J=0.2 & V_T=2 & \Gamma=0.1 & \Lambda=0.5.
\end{array}
$$
\end{example}
The coefficients in \eqref{eq:Fitz-Nagu-dift}-\eqref{eq:Fitz-Nagu-diffusion}, which satisfy Assumptions \ref{ass:assumptions_for_MV_coefficients} and \ref{ass:polynomial-growth-of-f} with $\bar{p}=426$, $p^{*}=6$ and $\gamma=2$. We test the schemes on $[0,10]$.
Figure \ref{fig:3D_single_parti_M_realiza_N_large} (left) shows $M = 500$ trajectories of a randomly selected particle from a system of $N=1000$, generated by the Tanh-Milstein method, the other three methods produce similar results and are omitted. The simulations confirm that the proposed schemes provide stable approximations.
%%%%%%%%%%%%%%%%%%%%%%%%%%%%%%%%%%%%%%%%%%%%%%%%%
\begin{figure}[H]
\begin{minipage}[t]{0.5\linewidth} 
\centering
\includegraphics[width=8.5cm,height=5cm]{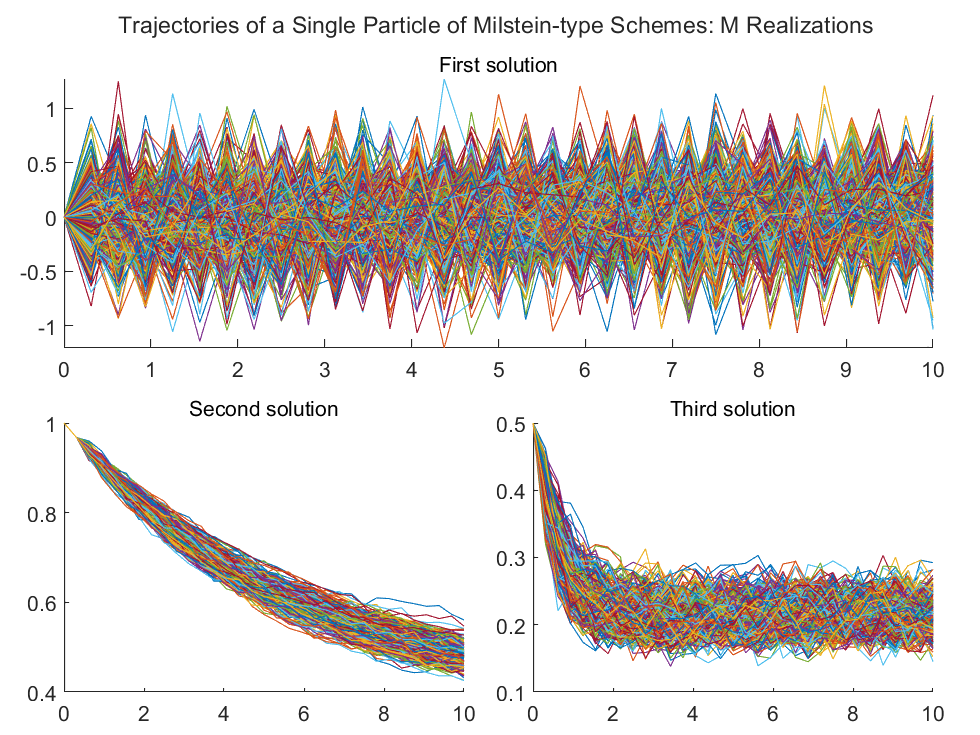}   
\end{minipage}
\hfill
\begin{minipage}[t]{0.5\linewidth}  
\centering
\includegraphics[width=7cm,height=5cm]{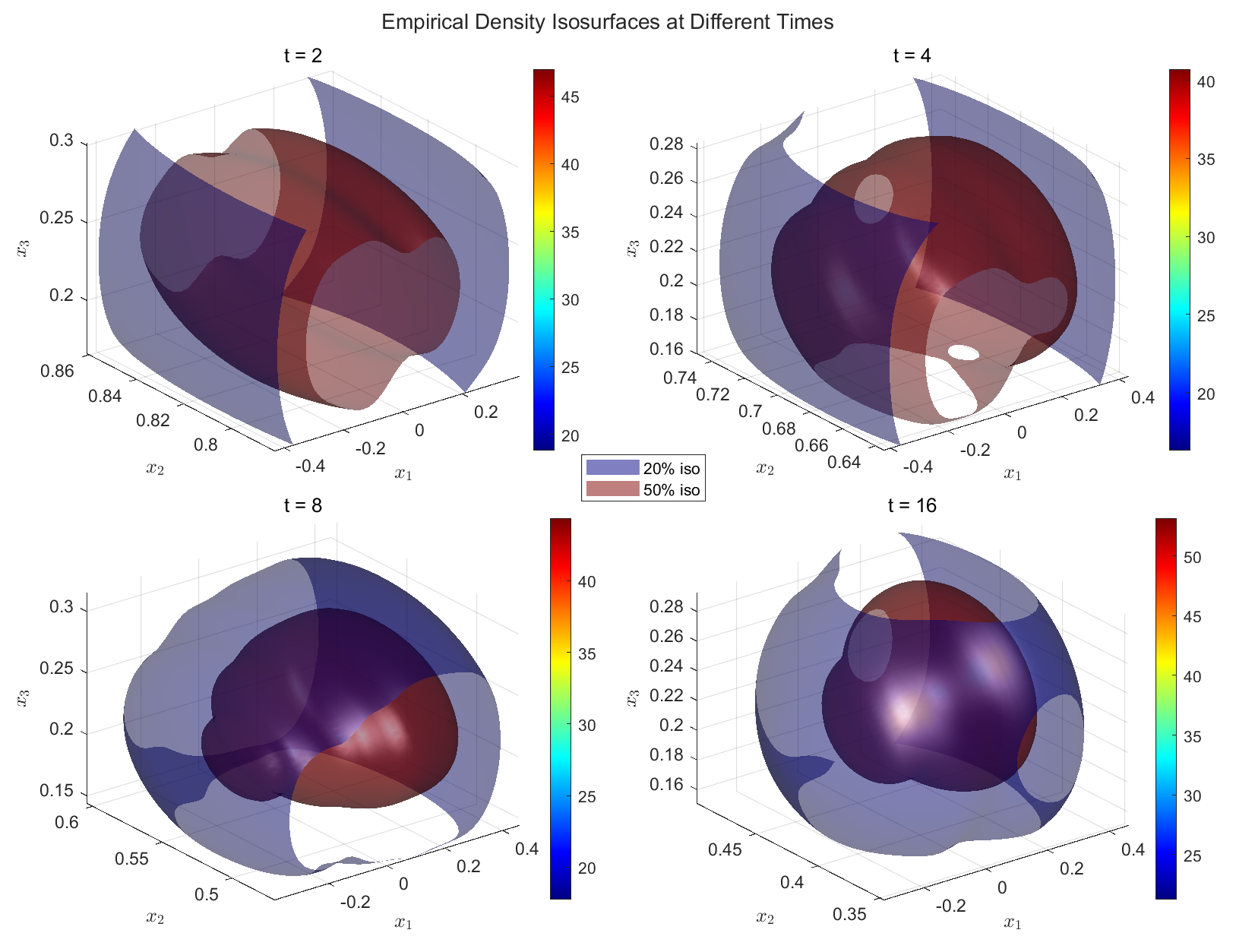} 
\end{minipage}
\vspace{-3em}
\caption{Trajectories of a single particle for the FitzHugh-Nagumo model (left) and empirical density evolution (right).}
\label{fig:3D_single_parti_M_realiza_N_large}
\end{figure}
\vspace{-1em}
%%%%%%%%%%%%%%%%%%%%%%%%%%%%%%%%%%%%%%%%%%
The right panel of Figure \ref{fig:3D_single_parti_M_realiza_N_large} shows the empirical densities of the particle system at times $\mathcal{T}=2,4,8,16$, obtained with the TanhM scheme. In each subplot, the isosurfaces at $20\%$ and $50\%$ of the maximum density are displayed, and similar patterns are observed for the other three schemes. As time increases, the densities exhibit a unimodal structure and gradually approach a stable distribution.

%As time evolves, the underlying dynamics become increasingly pronounced, exhibiting a unimodal structure and converging toward a stable distribution. This observation not only confirms the numerical stability of our schemes but also demonstrates their ability to accurately capture the long-time behavior and convergence to the stationary regime of the underlying McKean-Vlasov dynamics.
%%%%%%%%%%%%%%%%%%%%%%%%%%%%%%%%%%%%%%%%%%%
\begin{figure}[H]
\begin{minipage}[t]{0.5\linewidth} 
\centering
\includegraphics[width=7.5cm,height=5cm]{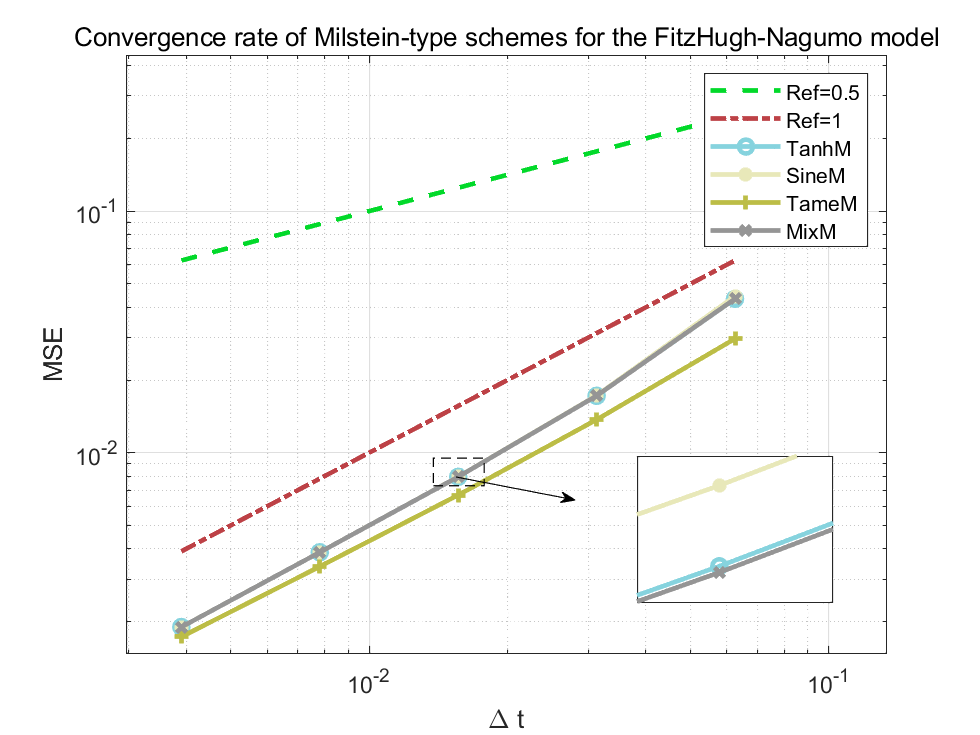}   
\end{minipage}
\hfill
\begin{minipage}[t]{0.5\linewidth}  
\centering
\includegraphics[width=7.5cm,height=5cm]{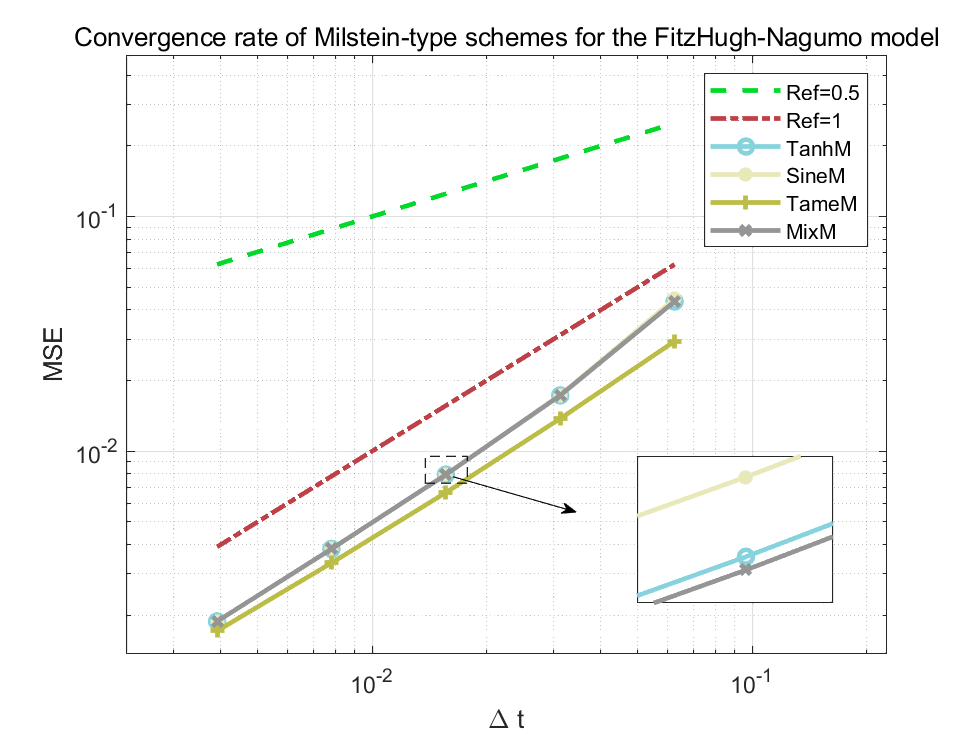} 
\end{minipage}
\vspace{-3em}
\caption{Convergence order of Milstein-type schemes for the FitzHugh-Nagumo model: $N=1000$ (left), $N=20$ (right).}
\label{fig:conver-rate-Mil-scheme-3D}
\end{figure}
\vspace{-1em}
%%%%%%%%%%%%%%%%%
We examine the convergence order of the proposed schemes for the FitzHugh-Nagumo model, using the solution with $\Delta t = 2^{-11}$ as reference and the same step sizes as in the previous examples. Figure \ref{fig:conver-rate-Mil-scheme-3D} shows the MSE for $N=1000$ and $N=20$, where TanhM \eqref{ex-eq:tanh-misltein}, SineM \eqref{ex-eq:sin-misltein}, TamedM \eqref{ex:tamed-euler-scheme} and MixM \eqref{ex-eq:mix-milstein-method} methods produce curves parallel to the slope-$1$ line, confirming first-order convergence and supporting the theory.
%%%%%%%%%%%%%%%%%%%%%%%%%%%%%%%%%%%%%%%%%%%%%%%%%%%%%%%%%
\section{Auxiliary estimates for strong convergence of Milstein-type schemes} \label{sec:conver-analy}
In this section, we provide the estimates required in the proof of the main theorem presented in Section \ref{sec:conver_rate_Milstein}. We begin with the following lemma, which establishes strong error bounds between $Y^{i,N}_{t}$ and $Y^{i,N}_{\tau_{n}(t)}$.
\begin{lemma}  \label{lem:The-difference-between-two-numerical-solutions}
Let Assumptions \ref{ass:assumptions_for_MV_coefficients} and \ref{ass:Initial-value}-\ref{ass:Gamma-control-conditions} hold. Then there exists $C>0$ such that, for any $q\ge 2$,
    \begin{align}
        \sup_{i\in \mathcal{I}_{N}}\mathbb{E}\bigg[\Big|Y_{t}^{i,N}-Y_{\tau_{n} (t)}^{i,N}\Big|^{q}\bigg] \le C \bigg(1+ \sup_{i\in\mathcal{I}_{N}}\mathbb{E}\bigg[\Big|Y_{\tau_{n}(t)}^{i,N}\Big|^{(\gamma+2)q}\bigg]\bigg)\Delta t^{\frac{q}{2}},
    \end{align}
where $\gamma$ is from Assumption \ref{ass:polynomial-growth-of-f}.
\end{lemma}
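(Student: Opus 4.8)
The plan is to control the one-step increment $Y^{i,N}_t - Y^{i,N}_{\tau_n(t)}$ by going back to the continuous-time formulation \eqref{eq:continuous-version-of-MMS}. On the interval $[\tau_n(t), t]$ the difference is
\[
Y_t^{i,N} - Y_{\tau_n(t)}^{i,N} = \int_{\tau_n(t)}^{t} \Gamma_1\!\big(f(\cdot),\Delta t\big)\,\mathrm{d}s + \sum_{j_1=1}^{m} \int_{\tau_n(t)}^{t} \Gamma_g\!\big(\hat g_{j_1}(s,\cdot),\Delta t\big)\,\mathrm{d}W_s^{i,j_1},
\]
so first I would split into the drift part and the diffusion part, raise to the $q$-th power, and use the elementary inequality $|a+b|^q \le 2^{q-1}(|a|^q+|b|^q)$. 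For the drift term, Hölder's inequality in $s$ over an interval of length $\Delta t$ gives a factor $\Delta t^{q-1}$ out front times $\int_{\tau_n(t)}^t \mathbb{E}|\Gamma_1(f(Y^{i,N}_{\tau_n(s)},\rho^{Y,N}_{\tau_n(s)}),\Delta t)|^q\,\mathrm{d}s$; since $\tau_n$ is constant on this interval this is just $\Delta t^q\,\mathbb{E}|\Gamma_1(\cdots)|^q$. For the diffusion term, the Burkholder–Davis–Gundy inequality converts the stochastic integral into $C_q\,\Delta t^{q/2-1}\int_{\tau_n(t)}^t \mathbb{E}|\Gamma_g(\hat g_{j_1}(s,\cdot),\Delta t)|^q\,\mathrm{d}s$, again producing the desired $\Delta t^{q/2}$ scaling once the integrand is bounded.

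The core of the argument is therefore bounding $\mathbb{E}|\Gamma_1(f(\cdots),\Delta t)|^q$ and $\mathbb{E}|\Gamma_g(\hat g_{j_1}(s,\cdots),\Delta t)|^q$ by $C(1+\sup_i \mathbb{E}|Y^{i,N}_{\tau_n(t)}|^{(\gamma+2)q})$. Here I would use Assumption \ref{ass:Gamma-control-conditions}, namely $|\Gamma_l(\mathds{F}_l(\cdot),\Delta t)| \le |\mathds{F}_l(\cdot)|$, to discard the taming operators entirely, reducing everything to polynomial growth of $f$, $g_{j_1}$, $\mathcal{L}_y^{j_2}g_{j_1}$, $\mathcal{L}_\rho^{j_2}g_{j_1}$. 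From \eqref{ineq:growth-condition-of-f}, $|f|\lesssim 1+|y|^{\gamma+1}+\mathbb{W}_2(\rho,\delta_0)$; from \eqref{ineq:growth-condition-of-g}, $|g_{j_1}|\lesssim 1+|y|^{\gamma/2+1}+\mathbb{W}_2(\rho,\delta_0)$; and combining the growth bounds \eqref{ineq:growth-condition-of-g-y}, \eqref{ineq:growth-condition-of-g-mu} with \eqref{ineq:growth-condition-of-g} one checks $|\mathcal{L}_y^{j_2}g_{j_1}| = |\partial_y g_{j_1}\, g_{j_2}| \lesssim (1+|y|^{\gamma/2}+\mathbb{W}_2(\rho,\delta_0))(1+|y|^{\gamma/2+1}+\mathbb{W}_2(\rho,\delta_0)) \lesssim 1+|y|^{\gamma+1}+\mathbb{W}_2(\rho,\delta_0)^2$, and similarly for the measure-derivative term with an extra $|z|$-type factor. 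The worst exponent that appears on $|y|$ is $\gamma+1$, but the iterated-integral terms inside $\Gamma_g$ are themselves integrated against a Brownian increment over $[\tau_n(s),s]$, contributing one more power of $\Delta t^{1/2}$ and no extra $|y|$ power; tracking the empirical-measure terms $\mathbb{W}_2(\rho^{Y,N}_{\tau_n(t)},\delta_0)^q = (\frac1N\sum_k |Y^{k,N}_{\tau_n(t)}|^2)^{q/2} \le \frac1N\sum_k |Y^{k,N}_{\tau_n(t)}|^q$ by Jensen and exchangeability keeps everything under $\sup_i \mathbb{E}|Y^{i,N}_{\tau_n(t)}|^{(\gamma+2)q}$ — the exponent $(\gamma+2)q$ in the statement being the conservative bound that accommodates the product structure of the Milstein correction terms.

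The main obstacle I anticipate is careful bookkeeping rather than any deep difficulty: the term $\Gamma_g$ has a nested structure (a leading $\Gamma_2(g_{j_1})$ plus two inner stochastic integrals involving $\Gamma_3$ and $\Gamma_4$), so applying BDG to the outer integral requires first establishing an $L^q$-in-$s$ bound on $\Gamma_g(\hat g_{j_1}(s,\cdot),\Delta t)$ uniformly in $s\in[\tau_n(t),t]$, which itself needs another BDG application to the inner integrals over $[\tau_n(s),s]$. One must verify that these nested integrals only ever contribute non-negative powers of $\Delta t$ (they do: each inner integral over a length-$\Delta t$ interval gives $\Delta t^{1/2}$ in $L^q$), so they can simply be absorbed into the constant $C$. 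The other point requiring a little care is that the stated bound is in terms of $\mathbb{E}|Y^{i,N}_{\tau_n(t)}|^{(\gamma+2)q}$ and not the uniform-in-$k$ moment bound of Lemma \ref{lem:moment-bounds-of-numerical-solution}; since $\tau_n$ is constant on $[\tau_n(t),t]$ this is immediate, and I would not invoke Lemma \ref{lem:moment-bounds-of-numerical-solution} here at all, keeping the estimate self-contained so that it can later be combined with the moment bound in the proof of Theorem \ref{thm:convergence-result-particle-scheme}.
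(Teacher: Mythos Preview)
Your proposal is correct and follows essentially the same route as the paper: decompose the increment via the continuous-time scheme, apply H\"older/BDG to the drift and (nested) stochastic integrals, strip the taming operators using Assumption~\ref{ass:Gamma-control-conditions}, and reduce everything to the polynomial growth bounds \eqref{ineq:growth-condition-of-f}, \eqref{ineq:growth-condition-of-g}, \eqref{ineq:growth-condition-of-g-y}, \eqref{ineq:growth-condition-of-g-mu} together with the Wasserstein estimate \eqref{eq:W_2^p}. One small correction in your bookkeeping: the worst exponent on the state variable is not $\gamma+1$ but $\gamma+2$, arising from the cross term $|y|^{\gamma/2+1}|z|^{\gamma/2+1}$ in $\mathcal{L}_\rho^{j_2}g_{j_1}$ after Young's inequality---which is exactly why the statement carries the exponent $(\gamma+2)q$ and not merely as a ``conservative'' choice.
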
       
%%%%%%%%%%%%%%%%
\begin{proof}
According to \eqref{eq:continuous-version-of-MMS}, one can use the $\rm H\ddot{o}lder$ inequality, moment inequality, Assumption \ref{ass:Gamma-control-conditions}, \eqref{ineq:growth-condition-of-f}, \eqref{ineq:growth-condition-of-g}, \eqref{ineq:growth-condition-of-g-y} and \eqref{ineq:growth-condition-of-g-mu}, to obtain 
\begin{align*}
     &\mathbb{E}\bigg[\Big|Y_{t}^{i,N}-Y_{\tau_{n} (t)}^{i,N}\Big|^{q}\bigg]  \\
     \le &~C \mathbb{E}\bigg[\Big|\int_{\tau_{n}(t)}^{t}\Gamma_{1}\left(f \left(Y_{\tau_{n}(s)}^{i, N}, \rho_{\tau_{n}(s)}^{Y, N} \right),\Delta t \right) \mathrm{d} s\Big|^{q}\bigg]  \\
     &~+ C\mathbb{E}\bigg[\Big|\sum_{j_{1}=1}^{m} \int_{\tau_{n}(t)}^{t}\Gamma_{2}\left(g_{j_{1}} \left(Y_{\tau_{n}(s)}^{i, N}, \rho_{\tau_{n}(s)}^{Y, N} \right), \Delta t\right) \mathrm{d} W^{i,j_{1}}_{s} \Big|^{q}\bigg]  \\
     &~+ C\mathbb{E}\bigg[\Big|\sum_{j_{1},j_{2}=1}^{m}  \int_{\tau_{n}(t)}^{t} \int_{\tau_{n}(t)}^{s} \Gamma_{3}\left(\mathcal{L}_{y}^{j_{2}}~g_{j_{1}}\left(Y_{\tau_{n}(r)}^{i, N}, \rho_{\tau_{n}(r)}^{Y, N} \right),\Delta t\right)\,\mathrm{d} W^{i,j_{2}}_{r} \,\mathrm{d} W^{i,j_{1}}_{s} \Big|^{q}\bigg]  \\
     &~+C\mathbb{E}\bigg[\Big|\frac{1}{N}\sum_{j_{1},j_{2}=1}^{m} \sum_{k_{1}=1}^{N} \int_{\tau_{n}(t)}^{t}  \int_{\tau_{n}(t)}^{s} \Gamma_{4}\left( \mathcal{L}_{\rho}^{j_{2}}~g_{j_{1}} \left(Y_{\tau_{n}(r)}^{i,N},\rho_{\tau_{n}(r)}^{Y,N},Y_{\tau_{n}(r)}^{k_{1},N}\right) ,\Delta t\right)\mathrm{d} W^{k_{1},j_{2}}_{r} \mathrm{d} W^{i,j_{1}}_{s} \Big|^{q}\bigg]  \\
     \le &~C\mathbb{E}\bigg[1+\Big|Y_{\tau_{n}(t)}^{i,N}\Big|^{(\gamma+2)q}+\frac{1}{N}\sum_{k_{1}=1}^{N}\Big|Y_{\tau_{n}(t)}^{k_{1},N}\Big|^{(\gamma+2)q}+\mathbb{W}_{2}^{\left(\frac{\gamma}{2}+2\right)q}\left(\rho_{\tau_{n}(t)}^{Y,N},\delta_{0}\right)\bigg]\Delta t^{\frac{q}{2}}.
\end{align*}

Thanks to \eqref{eq:W_2^p}, we have
\begin{align*}
    \mathbb{E}\bigg[\Big|Y_{t}^{i,N}-Y_{\tau_{n} (t)}^{i,N}\Big|^{q}\bigg] \le C \bigg(1+\sup_{i\in\mathcal{I}_{N}}\mathbb{E}\bigg[\Big|Y_{\tau_{n}(t)}^{i,N}\Big|^{(\gamma+2)q}\bigg]\bigg)\Delta t^{\frac{q}{2}}.
\end{align*}
The proof of the lemma is thus complete.
\end{proof}
%%%%%%%%%%%%%%%%%%%%%%%%%%%%%%
Lemmas \ref{lem:moment-bounds-of-numerical-solution} and \ref{lem:The-difference-between-two-numerical-solutions} provide the necessary estimates for deriving moment bounds of the continuous-time Milstein-type schemes \eqref{eq:continuous-version-of-MMS}.

\begin{lemma} \label{lem:moment_bound_of_continuous_form}
Let Assumptions \ref{ass:assumptions_for_MV_coefficients} and \ref{ass:Initial-value}-\ref{ass:Coefficient-comparison-conditions-of-f} hold. Then, for all $t\in [0,\mathcal{T}]$ and $p\in \big[2,\frac{\bar{p}-\Theta}{(\gamma+2)(1+\bar{\zeta}\Theta)}\big]$, there exist constants $C,\beta>0$, such that 
    $$
    \sup_{t\in[0,T]}\sup_{i\in\mathcal{I}_{N}}\mathbb{E}\Big[\big|Y_{t}^{i,N}\big|^{p}\Big]  \le C\Big(1+\Big(\mathbb{E}\Big[\big|Y_{0}\big|^{\bar{p}}\Big]\Big)^{\beta}\,\Big), 
    $$
where $\Theta$ is the constant appearing in Lemma \ref{lem:moment-bounds-of-numerical-solution} and $\bar{p} \ge 2(\gamma+2) (1+\bar{\zeta}\Theta)+ \Theta$.
\end{lemma}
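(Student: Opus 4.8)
The plan is to exploit the decomposition $Y_t^{i,N} = Y_{\tau_n(t)}^{i,N} + \big(Y_t^{i,N} - Y_{\tau_n(t)}^{i,N}\big)$ together with the two preceding lemmas, so that the continuous-time moment bound reduces entirely to the discrete-time one. Applying $|a+b|^p \le 2^{p-1}(|a|^p + |b|^p)$ gives, for any $t \in [0,\mathcal{T}]$ and $i \in \mathcal{I}_{N}$,
\begin{equation*}
\mathbb{E}\big[|Y_t^{i,N}|^p\big] \le 2^{p-1}\,\mathbb{E}\big[|Y_{\tau_n(t)}^{i,N}|^p\big] + 2^{p-1}\,\mathbb{E}\big[|Y_t^{i,N} - Y_{\tau_n(t)}^{i,N}|^p\big].
\end{equation*}
The first term is handled directly by Lemma \ref{lem:moment-bounds-of-numerical-solution}: this is legitimate because the admissible interval $\big[2, \tfrac{\bar{p} - \Theta}{(\gamma+2)(1+\bar{\zeta}\Theta)}\big]$ for $p$ is contained in $\big[2, \tfrac{\bar{p} - \Theta}{1+\bar{\zeta}\Theta}\big]$ (since $\gamma + 2 \ge 1$), and $\tau_n(t)$ is one of the grid points $t_k$, so the bound $C\big(1 + (\mathbb{E}[|Y_0|^{\bar{p}}])^{\beta'}\big)$ applies uniformly in $i$ and $k$.

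For the second term I would invoke Lemma \ref{lem:The-difference-between-two-numerical-solutions} with $q = p \ge 2$, which yields
\begin{equation*}
\mathbb{E}\big[|Y_t^{i,N} - Y_{\tau_n(t)}^{i,N}|^p\big] \le C\Big(1 + \sup_{i\in\mathcal{I}_{N}}\mathbb{E}\big[|Y_{\tau_n(t)}^{i,N}|^{(\gamma+2)p}\big]\Big)\Delta t^{p/2}.
\end{equation*}
The precise reason for the stricter range on $p$ in the present lemma is exactly that $(\gamma+2)p$ must still lie in $\big[2, \tfrac{\bar{p} - \Theta}{1+\bar{\zeta}\Theta}\big]$, so that Lemma \ref{lem:moment-bounds-of-numerical-solution} applies once more and bounds $\sup_{i}\mathbb{E}[|Y_{\tau_n(t)}^{i,N}|^{(\gamma+2)p}]$ by $C\big(1 + (\mathbb{E}[|Y_0|^{\bar{p}}])^{\beta''}\big)$ for some $\beta'' > 0$. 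Using $\Delta t \le \mathcal{T}$ to absorb $\Delta t^{p/2}$ into the constant and adding the two estimates gives the claimed bound with $\beta := \beta' \vee \beta''$; the supremum over $t \in [0,\mathcal{T}]$ and $i \in \mathcal{I}_{N}$ is then immediate since every constant involved is uniform in $t$, $i$ and $N$. The hypothesis $\bar{p} \ge 2(\gamma+2)(1+\bar{\zeta}\Theta) + \Theta$ is precisely what makes the interval for $p$ nonempty (it is the condition $\tfrac{\bar{p}-\Theta}{(\gamma+2)(1+\bar{\zeta}\Theta)} \ge 2$), and it also dominates the standing requirement $\bar p \ge 2 + (2\bar\zeta+1)\Theta$ of Lemma \ref{lem:moment-bounds-of-numerical-solution}, so all invocations above are consistent.

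I do not expect a genuine obstacle here: the statement is a routine corollary of Lemmas \ref{lem:moment-bounds-of-numerical-solution} and \ref{lem:The-difference-between-two-numerical-solutions}. The only point demanding care is the bookkeeping of the moment exponents — one must track that the one-step increment estimate costs a factor $\gamma+2$ in the exponent, and verify that the resulting exponent $(\gamma+2)p$ stays inside the range where the discrete moment bound holds. This is the sole reason the admissible interval in this lemma is the one from Lemma \ref{lem:moment-bounds-of-numerical-solution} contracted by the factor $\gamma+2$.
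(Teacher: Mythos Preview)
Your proposal is correct and follows essentially the same approach as the paper: decompose $Y_t^{i,N}$ into the grid value $Y_{\tau_n(t)}^{i,N}$ plus the one-step increment, apply Lemma \ref{lem:moment-bounds-of-numerical-solution} to the former and Lemma \ref{lem:The-difference-between-two-numerical-solutions} (with $q=p$) to the latter, and then invoke Lemma \ref{lem:moment-bounds-of-numerical-solution} once more at the exponent $(\gamma+2)p$ to close the estimate. Your bookkeeping of the moment ranges and the role of the hypothesis $\bar p \ge 2(\gamma+2)(1+\bar\zeta\Theta)+\Theta$ matches the paper's reasoning exactly.
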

\begin{proof}
    By applying Lemmas \ref{lem:moment-bounds-of-numerical-solution} and \ref{lem:The-difference-between-two-numerical-solutions} with $q=p$, for any $p\in \big[2,\frac{\bar{p}-\Theta}{(\gamma+2)(1+\bar{\zeta}\Theta)}\big]$, we obtain that
    \begin{align*}
         \mathbb{E}\Big[\big|Y_{t}^{i,N}\big|^{p}\Big] \le &~ C\mathbb{E}\Big[\big|Y_{t}^{i,N}-Y_{\tau_{n}(t)}^{i,N}\big|^{p}\Big] +C\mathbb{E}\Big[\big|Y_{\tau_{n}(t)}^{i,N}\big|^{p}\Big] \notag \\
         \le &~C\mathbb{E}\Big(1+\sup_{t\in[0,T]}\sup_{i\in\mathcal{I}_{N}}\mathbb{E}\Big[\big|Y_{\tau_{n}(t)}^{i,N}\big|^{(\gamma+2)p}\Big]\Big)\Delta t^{\frac{p}{2}} + C\sup_{t\in[0,T]}\sup_{i\in\mathcal{I}_{N}}\mathbb{E}\Big[\big|Y_{\tau_{n}(t)}^{i,N}\big|^{p}\Big] \notag \\
         \le &~C\Big(1+\Big(\mathbb{E}\Big[\big|Y_{0}\big|^{\bar{p}}\Big]\Big)^{\beta}\,\Big).
    \end{align*}
This completes the proof.
\end{proof}
%%%%%%%%%%%%%%%%%%%%%%%%%%%%%
We next recall the following result from \cite[Corollary 4.2]{kumar2021explicit}, which will be instrumental in our proof.

\begin{lemma} \cite[Corollary 4.2]{kumar2021explicit} \label{lem:mean-value-lemma}
Let $h: \mathbb{M}  \rightarrow \mathbb{R}$ be a function such that its derivative $\partial_y h:$ $\mathbb{M} \rightarrow \mathbb{R}^d$ and its measure derivative $\partial_\rho h: \bar{\mathbb{M}} \rightarrow \mathbb{R}^d$ exists. Then, there exists $\theta \in(0,1)$ such that,
\begin{align*}
h\bigg(y, \frac{1}{N} \sum_{j=1}^N \delta_{z^j}\bigg)&-h\bigg(\bar{y}, \frac{1}{N} \sum_{j=1}^N \delta_{\bar{z}^j}\bigg)=\int_{0}^{1} \partial_y h^{\top}\Big(\bar{y}+\theta(y-\bar{y}), \frac{1}{N} \sum_{j=1}^N \delta_{z^j}\Big) (y-\bar{y}) \,\mathrm{d}\theta \\
&+\frac{1}{N} \sum_{k_{1}=1}^N\int_{0}^{1}\partial_\rho h^{\top}\Big(\bar{y}, \frac{1}{N} \sum_{j=1}^N \delta_{\bar{z}^j+\theta(z^j-\bar{z}^j)},\bar{z}^{k_{1}}+\theta\left(z^{k_{1}}-\bar{z}^{k_{1}}\right)\Big) \left(z^{k_{1}}-\bar{z}^{k_{1}}\right)\,\mathrm{d} \theta,
\end{align*}
for all  $y$, $\bar{y}$, $z^j$, $\bar{z}^j$, $z^{k}$, $\bar{z}^{k}\in \mathbb{R}^d $.
\end{lemma}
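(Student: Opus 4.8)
The plan is to prove the exact integral (Newton--Leibniz) identity written on the right-hand side by telescoping the increment into a pure state move followed by a pure measure move, and applying the fundamental theorem of calculus to each; the phrase ``there exists $\theta\in(0,1)$'' I read as vestigial, since $\theta$ is an integration variable and the identity in fact holds for every such interpolation. First I would write
\[
h\Big(y,\tfrac1N{\textstyle\sum_{j}}\delta_{z^{j}}\Big)-h\Big(\bar y,\tfrac1N{\textstyle\sum_{j}}\delta_{\bar z^{j}}\Big)
=\Big[h\Big(y,\tfrac1N{\textstyle\sum_{j}}\delta_{z^{j}}\Big)-h\Big(\bar y,\tfrac1N{\textstyle\sum_{j}}\delta_{z^{j}}\Big)\Big]
+\Big[h\Big(\bar y,\tfrac1N{\textstyle\sum_{j}}\delta_{z^{j}}\Big)-h\Big(\bar y,\tfrac1N{\textstyle\sum_{j}}\delta_{\bar z^{j}}\Big)\Big],
\]
so that in the first bracket only the state variable varies (the empirical measure frozen at the $z^{j}$) and in the second only the measure varies (the state frozen at $\bar y$).

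For the first bracket, the map $\theta\mapsto h\big(\bar y+\theta(y-\bar y),\tfrac1N\sum_{j}\delta_{z^{j}}\big)$ is $C^{1}$ on $[0,1]$ by the assumed existence and continuity of $\partial_{y}h$, and the chain rule followed by $\int_{0}^{1}\tfrac{d}{d\theta}(\cdot)\,\mathrm{d}\theta$ produces exactly the first term of the claimed identity. For the second bracket, I would invoke the standard identification of the Lions derivative with the gradient of the empirical projection: for an $L$-differentiable $u\colon\mathscr P_{2}(\mathbb R^{d})\to\mathbb R$, the function $u^{N}(x^{1},\dots,x^{N}):=u\big(\tfrac1N\sum_{j}\delta_{x^{j}}\big)$ is differentiable on $(\mathbb R^{d})^{N}$ with $\partial_{x^{k}}u^{N}(x^{1},\dots,x^{N})=\tfrac1N\,\partial_{\rho}u\big(\tfrac1N\sum_{j}\delta_{x^{j}}\big)(x^{k})$ (see \ref{appen:differen_measure} and \cite{carmona2018probabilistic}). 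Applying this with $u(\cdot)=h(\bar y,\cdot)$ along the linear path $\theta\mapsto\big(\bar z^{1}+\theta(z^{1}-\bar z^{1}),\dots,\bar z^{N}+\theta(z^{N}-\bar z^{N})\big)$, the chain rule gives
\[
\frac{d}{d\theta}\,u^{N}\big(\bar z+\theta(z-\bar z)\big)
=\frac1N\sum_{k_{1}=1}^{N}\partial_{\rho}h^{\top}\Big(\bar y,\tfrac1N{\textstyle\sum_{j}}\delta_{\bar z^{j}+\theta(z^{j}-\bar z^{j})},\,\bar z^{k_{1}}+\theta(z^{k_{1}}-\bar z^{k_{1}})\Big)\big(z^{k_{1}}-\bar z^{k_{1}}\big),
\]
and integrating over $\theta\in[0,1]$ delivers the second term. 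Adding the two contributions yields the identity.

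I expect the only delicate point to be the justification of the empirical-projection differentiation formula $\partial_{x^{k}}u^{N}=\tfrac1N\partial_{\rho}u$ and the $C^{1}$-regularity of $\theta\mapsto u^{N}(\bar z+\theta(z-\bar z))$ on $[0,1]$; this is where joint continuity of $\partial_{\rho}h$ in its three arguments enters, so that Newton--Leibniz is legitimate, and it is precisely the setting assumed in \cite{kumar2021explicit}. Since the statement is quoted verbatim from \cite[Corollary 4.2]{kumar2021explicit}, one may alternatively simply cite that reference; the sketch above merely records the route one would take to reprove it.
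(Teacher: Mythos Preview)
Your proposal is correct and follows the standard route; the paper itself does not give any proof of this lemma but merely cites \cite[Corollary 4.2]{kumar2021explicit}, so there is nothing to compare against. Your observation that the quantifier ``there exists $\theta\in(0,1)$'' is vestigial (since $\theta$ is the integration variable in an exact Newton--Leibniz identity) is also well taken.
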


To proceed with the subsequent analysis, we first introduce the following notation.
\begin{equation}
\begin{aligned}
&e_{s}^{i,N}:=X_{s}^{i,N}-Y_{s}^{i,N}, \; \Delta f_{s}^{X,Y}:=f \left(X_s^{i, N}, \rho_s^{X, N} \right) -f \left(Y_s^{i,N}, \rho_s^{Y,N} \right), \\
&\Delta f^{X,Y;\,\Gamma_{1}}_{s,\tau_{n}(s)}:= f \Big(X_s^{i, N}, \rho_s^{X, N} \Big) -\Gamma_{1}\Big(f\left(Y_{\tau_{n}(s)}^{i,N},\rho_{\tau_{n}(s)}^{Y,N}\right),\Delta t\Big), \\
&\Delta f_{s,\tau_{n}(s)}^{Y,Y} := f \Big(Y_s^{i,N}, \rho_s^{Y,N} \Big)-f\left(Y_{\tau_{n}(s)}^{i,N},\rho_{\tau_{n}(s)}^{Y,N}\right), \\
&\Delta f^{Y,Y;\,\Gamma_{1}}_{\tau_{n}(s)}:= f\Big(Y_{\tau_{n}(s)}^{i,N},\rho_{\tau_{n}(s)}^{Y,N}\Big)-\Gamma_{1}\left(f\left(Y_{\tau_{n}(s)}^{i,N},\rho_{\tau_{n}(s)}^{Y,N}\right),\Delta t\right),   \\
&\Delta g_{j,s,\tau_{n}(s)}^{X,Y;\,\Gamma_{g}}:= g_{j} \Big(X_s^{i, N}, \rho_s^{X, N} \Big)-\Gamma_{g} \Big(\hat{g}_{j}\left(s,Y_{\tau_{n}(s)}^{i,N},\rho_{\tau_{n}(s)}^{Y,N}\right),\Delta t \Big), \\
&\Delta g_{j,s}^{X,Y}:= g_{j} \Big(X_s^{i, N}, \rho_s^{X, N} \Big)-g_{j} \left(Y_s^{i, N}, \rho_s^{Y, N} \right),  \\
&\Delta g_{j,s,\tau_{n}(s)}^{Y,Y;\,\Gamma_{g}}:= g_{j} \Big(Y_s^{i, N}, \rho_s^{Y, N} \Big)-\Gamma_{g} \left(\hat{g}_{j}\left(s,Y_{\tau_{n}(s)}^{i,N},\rho_{\tau_{n}(s)}^{Y,N}\right),\Delta t \right).
\end{aligned}
\end{equation}
Also, we use $\partial_{y}f^{(v)},\partial_{y}g_{j}^{(v)},\partial_{\rho}f^{(v)}$ and $\partial_{\rho}g_{j}^{(v)} $ denote the $v$-th components of the matrix $ \partial_{y}f, \partial_{y}g_{j}, \partial_{\rho}f $ and $ \partial_{\rho}g_{j} $, respectively.
\begin{lemma} \label{lem:g_and_Gamma_g_difference}
Let Assumptions \ref{ass:assumptions_for_MV_coefficients}, \ref{ass:Initial-value}-\ref{ass:Gamma-control-conditions}, and \ref{ass:Coefficient-comparison-conditions-of-Gamma1-Gamma4} hold. Then, for all $p\in[2,\hat{p}]$, there exist constants $C,\beta>0$ such that 
    \begin{align}
        \sum_{j_{1}=1}^{m} \mathbb{E}\left[\int_{0}^{t}\left|\Delta g_{j_{1},s,\tau_{n}(s)}^{Y,Y;\,\Gamma_{g}}\right|^{p}\mathrm{d}s\right] \le C\bigg(1+\Big(\mathbb{E}\Big[\big|Y_{0}\big|^{\bar{p}}\Big]\Big)^{\beta}\bigg) \Delta t^{p}.
    \end{align}
Here, the upper bound $\hat{p}$ for the admissible moment order is given by
$$
\hat{p}:=\left( \frac{1}{(\gamma+2)(\gamma+\epsilon-2)} \wedge\frac{1}{4(\gamma+2)} \wedge  \frac{1}{(\frac{\gamma}{2}+1)(1+\gamma_{2})} \wedge \frac{1}{(\gamma+1)\gamma_{3}} \wedge \frac{1}{(\gamma+2)\gamma_{4}}\right) \times \frac{\bar{p}-\Theta}{1+\bar{\zeta}\Theta},
$$
where $\epsilon>0$, and $\bar{p}, \bar{\zeta}, \Theta, \gamma$ are defined in Lemma \ref{lem:moment-bounds-of-numerical-solution}, 
while $\bar{p}$ comes from Assumption~\ref{ass:assumptions_for_MV_coefficients}, and $\gamma_{2}, \gamma_{3}, \gamma_{4}$ from Assumption~\ref{ass:Coefficient-comparison-conditions-of-Gamma1-Gamma4}, respectively.
\end{lemma}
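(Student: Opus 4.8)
The quantity $\Delta g_{j_{1},s,\tau_{n}(s)}^{Y,Y;\,\Gamma_{g}}$ is exactly the one-step local error of a Milstein expansion for the diffusion coefficient, rewritten with the taming operators $\Gamma_{2},\Gamma_{3},\Gamma_{4}$ and the Lions derivatives, so the plan is to reproduce the classical Milstein cancellation in this tamed, measure-dependent setting. Since $\tau_{n}(r)=\tau_{n}(s)$ for $r\in[\tau_{n}(s),s]$, formula \eqref{eq:Gamma_g} shows that $\Gamma_{g}(\hat{g}_{j_{1}}(s,\cdot))$ is, with the argument frozen at $(Y_{\tau_{n}(s)}^{i,N},\rho_{\tau_{n}(s)}^{Y,N})$, the sum of $\Gamma_{2}(g_{j_{1}})$, of $\sum_{j_{2}}\Gamma_{3}(\mathcal{L}_{y}^{j_{2}}g_{j_{1}})\,(W_{s}^{i,j_{2}}-W_{\tau_{n}(s)}^{i,j_{2}})$, and of $\tfrac{1}{N}\sum_{j_{2},k_{1}}\Gamma_{4}(\mathcal{L}_{\rho}^{j_{2}}g_{j_{1}})\,(W_{s}^{k_{1},j_{2}}-W_{\tau_{n}(s)}^{k_{1},j_{2}})$. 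I would first write $\Delta g_{j_{1},s,\tau_{n}(s)}^{Y,Y;\,\Gamma_{g}}=\big(g_{j_{1}}(Y_{s}^{i,N},\rho_{s}^{Y,N})-g_{j_{1}}(Y_{\tau_{n}(s)}^{i,N},\rho_{\tau_{n}(s)}^{Y,N})\big)$ minus these three terms, and immediately peel off the taming discrepancies $\Gamma_{l}(\mathds{F}_{l})-\mathds{F}_{l}$: by Assumption \ref{ass:Coefficient-comparison-conditions-of-Gamma1-Gamma4} they are bounded by $C\Delta t^{\delta_{l}}|\mathds{F}_{l}|^{\gamma_{l}}$ with $\delta_{2}\ge1$, $\delta_{3},\delta_{4}\ge\tfrac12$, and for $l=3,4$ they come multiplied by a Brownian increment of $L^{p}$-size $\Delta t^{1/2}$, so each such discrepancy is $O(\Delta t)$ in $L^{p}$ once the polynomial growth bounds \eqref{ineq:growth-condition-of-g}, \eqref{ineq:growth-condition-of-g-y}, \eqref{ineq:growth-condition-of-g-mu} and the moment bound of Lemma \ref{lem:moment-bounds-of-numerical-solution} are invoked.

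\textbf{The Milstein cancellation.} It then remains to estimate $g_{j_{1}}(Y_{s}^{i,N},\rho_{s}^{Y,N})-g_{j_{1}}(Y_{\tau_{n}(s)}^{i,N},\rho_{\tau_{n}(s)}^{Y,N})$ minus the two Milstein corrections $\sum_{j_{2}}\mathcal{L}_{y}^{j_{2}}g_{j_{1}}(Y_{\tau_{n}(s)}^{i,N},\rho_{\tau_{n}(s)}^{Y,N})\,\Delta W_{s}^{i,j_{2}}$ and $\tfrac{1}{N}\sum_{j_{2},k_{1}}\mathcal{L}_{\rho}^{j_{2}}g_{j_{1}}(Y_{\tau_{n}(s)}^{i,N},\rho_{\tau_{n}(s)}^{Y,N},Y_{\tau_{n}(s)}^{k_{1},N})\,\Delta W_{s}^{k_{1},j_{2}}$, with $\Delta W_{s}^{\cdot,j_{2}}=W_{s}^{\cdot,j_{2}}-W_{\tau_{n}(s)}^{\cdot,j_{2}}$. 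Apply Lemma \ref{lem:mean-value-lemma} componentwise to the coefficient increment, which expresses it as a $\theta$-average of $\partial_{y}g_{j_{1}}^{\top}(Y_{s}^{i,N}-Y_{\tau_{n}(s)}^{i,N})$ at an intermediate state plus a $\tfrac{1}{N}\sum_{k_{1}}$-average of $\partial_{\rho}g_{j_{1}}^{\top}(Y_{s}^{k_{1},N}-Y_{\tau_{n}(s)}^{k_{1},N})$; then substitute, from the continuous scheme \eqref{eq:continuous-version-of-MMS}, $Y_{s}^{\cdot,N}-Y_{\tau_{n}(s)}^{\cdot,N}=\int_{\tau_{n}(s)}^{s}\Gamma_{1}(f)\,\mathrm{d}r+\sum_{j_{2}}\int_{\tau_{n}(s)}^{s}\Gamma_{g}(\hat{g}_{j_{2}})\,\mathrm{d}W_{r}^{\cdot,j_{2}}$, and split $\Gamma_{g}$ into its leading piece $\Gamma_{2}(g_{j_{2}}(Y_{\tau_{n}(s)}^{\cdot,N},\rho_{\tau_{n}(s)}^{Y,N}))$ and an iterated-integral remainder. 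The product of the $\theta$-averaged $\partial_{y}g_{j_{1}}$ with $\sum_{j_{2}}\Gamma_{2}(g_{j_{2}}(Y_{\tau_{n}(s)}^{i,N},\rho_{\tau_{n}(s)}^{Y,N}))\,\Delta W_{s}^{i,j_{2}}$ is the piece designed to cancel the first Milstein correction, and their difference decomposes into: (a) replacing the intermediate state and measure by $Y_{\tau_{n}(s)}^{i,N}$ and $\rho_{\tau_{n}(s)}^{Y,N}$ inside $\partial_{y}g_{j_{1}}$, controlled by its local Lipschitz bound in Assumption \ref{ass:Derivative-of-f-and-g-with-respect-to-y-mu} together with Lemma \ref{lem:The-difference-between-two-numerical-solutions}; (b) replacing $\Gamma_{2}(g_{j_{2}})$ by $g_{j_{2}}$, of size $O(\Delta t^{\delta_{2}})$; (c) replacing the untamed product $\partial_{y}g_{j_{1}}\,g_{j_{2}}$ by $\Gamma_{3}(\mathcal{L}_{y}^{j_{2}}g_{j_{1}})$, of size $O(\Delta t^{\delta_{3}})$ but carrying an extra Brownian increment; and an entirely parallel chain for the measure-derivative term versus the $\Gamma_{4}$-correction. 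The leftover contributions are lower order: the drift piece $\int_{\tau_{n}(s)}^{s}\Gamma_{1}(f)\,\mathrm{d}r$ times $\partial_{y}g_{j_{1}}$ is $O(\Delta t)$ directly from $|\Gamma_{1}(f)|\le|f|$ and \eqref{ineq:growth-condition-of-f}, \eqref{ineq:growth-condition-of-g-y}, and each iterated double integral $\int_{\tau_{n}(s)}^{s}\!\int_{\tau_{n}(s)}^{r}\mathrm{d}W\,\mathrm{d}W$ entering through the $\Gamma_{g}$-split has $L^{p}$-norm $O(\Delta t)$ by Burkholder--Davis--Gundy.

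\textbf{Assembling the bound.} Every term produced above is, in $L^{p}$-norm, of the form $C\,\big(1+\text{a finite particle moment of order }\le\bar{p}\big)\,\Delta t$, where the moment orders that appear are dictated by the polynomial growth exponents in $\gamma$ (through \eqref{ineq:growth-condition-of-g}, \eqref{ineq:growth-condition-of-g-y}, \eqref{ineq:growth-condition-of-g-mu}), by the exponents $\gamma_{2},\gamma_{3},\gamma_{4}$ of Assumption \ref{ass:Coefficient-comparison-conditions-of-Gamma1-Gamma4}, and by the factor $(\gamma+2)$ that Lemma \ref{lem:The-difference-between-two-numerical-solutions} attaches whenever $Y_{s}^{\cdot,N}-Y_{\tau_{n}(s)}^{\cdot,N}$ is estimated. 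After Hölder splittings separating these polynomial factors from the increment estimates, the requirement that all resulting moments stay within the range permitted by Lemma \ref{lem:moment-bounds-of-numerical-solution} and Lemma \ref{lem:moment_bound_of_continuous_form}, i.e. below $\tfrac{\bar{p}-\Theta}{1+\bar{\zeta}\Theta}$, is precisely the collection of five constraints whose intersection is $\hat{p}$. Finally, raising to the $p$-th power via $|a_{1}+\cdots+a_{k}|^{p}\le C\sum_{l}|a_{l}|^{p}$, integrating over $s\in[0,t]\subseteq[0,\mathcal{T}]$, taking $\sup_{i\in\mathcal{I}_{N}}$, and using exchangeability of the particles to turn the $\tfrac{1}{N}\sum_{k_{1}}$ empirical sums into a single $\sup_{i}$, gives the asserted $C\big(1+(\mathbb{E}[|Y_{0}|^{\bar{p}}])^{\beta}\big)\Delta t^{p}$.

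\textbf{Main obstacle.} The genuinely delicate step is the term-by-term matching in the Milstein cancellation: because the taming operators act on the composite objects $\mathcal{L}_{y}^{j_{2}}g_{j_{1}}=\partial_{y}g_{j_{1}}\,g_{j_{2}}$ and $\mathcal{L}_{\rho}^{j_{2}}g_{j_{1}}=\partial_{\rho}g_{j_{1}}\,g_{j_{2}}$ rather than on the factors separately, one must insert and subtract the untamed products before Assumption \ref{ass:Coefficient-comparison-conditions-of-Gamma1-Gamma4} becomes applicable, all while keeping the polynomial growth exponents under control — and the local Lipschitz bounds for $\partial_{y}g$, $\partial_{\rho}g$ with their possibly vanishing exponents (e.g. $\tfrac{\gamma}{2}-1$, which is $0$ when $\gamma=2$) are what force the $\epsilon$-cushion and hence the precise shape of $\hat{p}$. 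Keeping the empirical-measure bookkeeping consistent — the Lions-derivative correction is a double sum over $j_{2},k_{1}$, each summand itself requiring the mean value lemma and a moment bound — is the other place demanding care.
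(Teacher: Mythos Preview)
Your approach is essentially the same as the paper's: decompose $\Delta g^{Y,Y;\Gamma_g}$, apply the mean value Lemma \ref{lem:mean-value-lemma} to the $g$-increment, substitute the continuous scheme \eqref{eq:continuous-version-of-MMS} for the one-step increments, and close with Hölder plus the moment bounds. The paper organizes the decomposition into four pieces $J_1$--$J_4$ (Taylor remainder, $\Gamma_2$-taming error, state-derivative-times-increment versus the $\Gamma_3$ integral, and the Lions-derivative analogue with $\Gamma_4$); you instead peel all three taming discrepancies off first and then merge the Taylor remainder and the Milstein cancellation into one sweep. Both routes lead to the same estimates, and your reading of where the $\epsilon$-cushion in $\hat p$ originates is correct.

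One small bookkeeping slip: having already peeled off $\Gamma_3(\mathcal{L}_y^{j_2}g_{j_1})-\mathcal{L}_y^{j_2}g_{j_1}$ in your first paragraph, step (c) in the second paragraph is redundant. After (a) and (b) the residual is exactly $\partial_yg_{j_1}(Y_{\tau_n(s)}^{i,N},\rho_{\tau_n(s)}^{Y,N})\,g_{j_2}(Y_{\tau_n(s)}^{i,N},\rho_{\tau_n(s)}^{Y,N})\,\Delta W_s^{i,j_2}=\mathcal{L}_y^{j_2}g_{j_1}\,\Delta W_s^{i,j_2}$, which cancels the untamed Milstein correction on the nose. (In the paper's organization the $\Gamma_3$-discrepancy surfaces inside $J_3$ rather than being peeled off upfront, so there it is not redundant.) This does not affect the validity of your argument.
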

\begin{proof}
Observe that
\begin{equation*}
    \left|\Delta g_{j_{1},s,\tau_{n}(s)}^{Y,Y;\,\Gamma_{g}}\right|^{p}\le C\sum_{v=1}^{d} \left|\Delta g_{j_{1},s,\tau_{n}(s)}^{(v),Y,Y;\,\Gamma_{g}}\right|^{p},
\end{equation*}
where $\Delta g_{j_{1},s,\tau_{n}(s)}^{(v),Y,Y;\,\Gamma_{g}}$ denotes the $v$-th component of the vector $\Delta g_{j_{1},s,\tau_{n}(s)}^{Y,Y;\,\Gamma_{g}}$. Thus, it follows from $\Gamma_{g}$ in \eqref{eq:Gamma_g} that
 \begin{align} \label{Inter-proce:J} 
 &\mathbb{E}\left[\left|\Delta g_{j_{1},s,\tau_{n}(s)}^{(v),Y,Y;\,\Gamma_{g}}\right|^{p}\right]\notag \\
  \le&~C\mathbb{E}\bigg[ \Big|g_{j_{1}}^{(v)}\left(Y_{s}^{i,N},\rho_{s}^{Y,N}\right)-g_{j_{1}}^{(v)}\left(Y_{\tau_{n}(s)}^{i,N},\rho_{\tau_{n}(s)}^{Y,N}\right)
  -\partial_{y}g_{j_{1}}^{(v)\top}\left(Y_{\tau_{n}(s)}^{i,N},\rho_{\tau_{n}(s)}^{Y,N}\right)\left(Y_{s}^{i,N}-Y_{\tau_{n}(s)}^{i,N}\right) \notag \\
  &~ \qquad \quad -\frac{1}{N}\sum_{k_{1}=1}^{N}\partial_{\rho} g_{j_{1}}^{(v)\top}\left(Y_{\tau_{n}(s)}^{i,N},\rho_{\tau_{n}(s)}^{Y,N},Y_{\tau_{n}(s)}^{k_{1},N}\right)\left(Y_{s}^{k_{1},N}-Y_{\tau_{n}(s)}^{k_{1},N}\right)\Big|^{p}\bigg]\notag \\
  &~+C\mathbb{E} \bigg[\left|g_{j_{1}}^{(v)}\left(Y_{\tau_{n}(s)}^{i,N},\rho_{\tau_{n}(s)}^{Y,N}\right)-\Gamma_{2}\left(g_{j_{1}}^{(v)} \left(Y_{\tau_{n}(s)}^{i, N}, \rho_{\tau_{n}(s)}^{Y, N} \right), \Delta t\right)\right|^{p}\bigg] \notag \\
  &~+C\mathbb{E}\bigg[\Big|\partial_{y}g_{j_{1}}^{(v)\top}\left(Y_{\tau_{n}(s)}^{i,N},\rho_{\tau_{n}(s)}^{Y,N}\right)\left(Y_{s}^{i,N}-Y_{\tau_{n}(s)}^{i,N}\right) \notag \\
  &~\qquad \qquad \qquad 
  -\sum_{j_{2}=1}^{m} \int_{\tau_{n}(s)}^{s} \Gamma_{3}\left(\mathcal{L}_{y}^{j_{2}}~g_{j_{1}}^{(v)}\left(Y_{\tau_{n}(r)}^{i, N}, \rho_{\tau_{n}(r)}^{Y, N} \right),\Delta t\right)   \mathrm{d}W^{i,j_{2}}_{r}\Big|^{p}\bigg] \notag\\
  &~+C\mathbb{E}\bigg[\Big|\frac{1}{N}\sum_{k_{1}=1}^{N}\partial_{\rho} g_{j_{1}}^{(v)\top}\left(Y_{\tau_{n}(s)}^{i,N},\rho_{\tau_{n}(s)}^{Y,N},Y_{\tau_{n}(s)}^{k_{1},N}\right)\left(Y_{s}^{k_{1},N}-Y_{\tau_{n}(s)}^{k_{1},N}\right)   \notag \\
  &\qquad  \qquad \quad -\frac{1}{N} \sum_{k_{1}=1}^{N} \sum_{j_{2}=1}^{m}\int_{\tau_{n}(s)}^{s} \Gamma_{4}\left( \mathcal{L}_{\rho}^{j_{2}}~g_{j_{1}}^{(v)} \left(Y_{\tau_{n}(r)}^{i,N},\rho_{\tau_{n}(r)}^{Y,N},Y_{\tau_{n}(r)}^{k_{1},N}\right) ,\Delta t\right)   \mathrm{d}W^{k_{1},j_{2}}_{r} \Big|^{p}\bigg] \notag \\
 :=&~C\mathbb{E}\left[\left|J_{1}\right|^{p}\right] + C\mathbb{E}\left[\left|J_{2}\right|^{p}\right] + C\mathbb{E}\left[\left|J_{3}\right|^{p}\right] + C\mathbb{E}\left[\left|J_{4}\right|^{p}\right].
 \end{align}   
 
Applying Lemma \ref{lem:mean-value-lemma}, Assumption \ref{ass:Derivative-of-f-and-g-with-respect-to-y-mu}, Young's inequality, and $\rm H\ddot{o}lder$'s inequality, one can obtain
 \begin{align*} 
 &\mathbb{E}\left[\left|J_{1}\right|^{p}\right] \notag \\
  =&~ \mathbb{E} 
  \bigg[ \left|\int_{0}^{1}\left(\partial _{y}g_{j_{1}}^{(v)\top}\left(Y_{\tau_{n}(s)}^{i,N}+\theta \left(Y_{s}^{i,N}-Y_{\tau_{n}(s)}^{i,N}\right),\rho_{s}^{Y,N}\right)
  -\partial_{y}g_{j_{1}}^{(v)\top}\left(Y_{\tau_{n}(s)}^{i,N},\rho_{\tau_{n}(s)}^{Y,N}\right)\right) \left(Y_{s}^{i,N}-Y_{\tau_{n}(s)}^{i,N}\right)\,\mathrm{d}\theta\right.  \notag \\
  &~ \qquad +\frac{1}{N}\sum_{k_{1}=1}^{N} \int_{0}^{1}  \left(\partial_{\rho}  g_{j_{1}}^{(v)\top}\left(Y_{\tau_{n}(s)}^{i,N},\rho_{\tau_{n}(s)+\theta(s-\tau_{n}(s))}^{Y,N},Y_{\tau_{n}(s)}^{k_{1},N}+\theta \left(Y_{s}^{k_{1},N}-Y_{\tau_{n}(s)}^{k_{1},N}\right)\right)\right.\notag  \\
  &~\qquad  \qquad \qquad \qquad \qquad \qquad   \left. \left.-\partial_{\rho}  g_{j_{1}}^{(v)\top}\left(Y_{\tau_{n}(s)}^{i,N},\rho_{\tau_{n}(s)}^{Y,N},Y_{\tau_{n}(s)}^{k_{1},N}\right)\right)\left(Y_{s}^{k_{1},N}-Y_{\tau_{n}(s)}^{k_{1},N}\right)\,\mathrm{d}\theta\right|^{p} \bigg] \notag \\
  \le &~C\bigg(\mathbb{E}\bigg[\Big( 1+\Big|Y_{s}^{i,N}\Big|^{\frac{\gamma}{2}-1}+\Big|Y_{\tau_{n}(s)}^{i,N}\Big|^{\frac{\gamma}{2}-1}\Big)^{2p}\bigg]\bigg)^{\frac{1}{2}} \bigg(\mathbb{E}\bigg[\Big|Y_{s}^{i,N}-Y_{\tau_{n}(s)}^{i,N}\Big|^{4p}\bigg]\bigg)^{\frac{1}{2}} \notag  \\
  &~+ C\mathbb{E}\bigg[\mathbb{W}_{2}^{2p}\left(\rho_{s}^{Y,N},\rho_{\tau_{n}(s)}^{Y,N}\right)\bigg] + \mathbb{E}\left[\left|Y_{s}^{i,N}-Y_{\tau_{n}(s)}^{i,N}\right|^{2p}\right] + C\mathbb{E} \bigg[\frac{1}{N}\sum_{k_{1}=1}^{N} \left|Y_{s}^{k_{1},N}-Y_{\tau_{n}(s)}^{k_{1},N}\right|^{2p} \bigg] \notag \\
  &~+ C\mathbb{E}\bigg[\mathbb{W}_{2}^{2p}\Big(\rho_{\tau_{n}(s)+\theta (s-\tau_{n}(s))}^{Y,N},\rho_{\tau_{n}(s)}^{Y,N}\Big)\bigg].
 \end{align*}
 
According to estimate of $\mathbb{W}_{2}^{2}\left(\rho_{s}^{X,N},\rho_{s}^{Y,N}\right)$ in \cite[Section 2]{goncalo2021simulation}, we further apply Jensen's inequality for $q \ge 2 $ to get
\begin{equation} \label{ineq:W_p-difference}
    \mathbb{W}_{2}^{q}\left(\rho_{s}^{X,N},\rho_{s}^{Y,N}\right) \le \frac{1}{N} \sum_{i=1}^{N} \left|X_{s}^{i,N}-Y_{s}^{i,N}\right|^{q}.
\end{equation}

Combining \eqref{ineq:W_p-difference} with Lemmas \ref{lem:The-difference-between-two-numerical-solutions}, \ref{lem:moment_bound_of_continuous_form} and \ref{lem:moment-bounds-of-numerical-solution}, we deduce that for all $\epsilon>0$ and $p \in [2, \hat{p}]$,
 \begin{align}  \label{Inter-proce:J1}
   \mathbb{E}\left[\left|J_{1}\right|^{p}\right]  
   \le &~C\bigg(\mathbb{E}\bigg[\Big( 1+\Big|Y_{s}^{i,N}\Big|^{\frac{\gamma+\epsilon}{2}-1}+\Big|Y_{\tau_{n}(s)}^{i,N}\Big|^{\frac{\gamma+\epsilon}{2}-1}\Big)^{2p}\bigg]\bigg)^{\frac{1}{2}} \bigg(1+\sup_{i\in \mathcal{I}_{N}}\mathbb{E}\bigg[\Big|Y_{\tau_{n}(s)}^{i,N}\Big|^{4p(\gamma+2)}\bigg]\bigg)^{\frac{1}{2}} \Delta t^{p} \notag  \\
  &~+C\bigg(1+\sup_{i\in \mathcal{I}_{N}}\mathbb{E}\bigg[\Big|Y_{\tau_{n}(s)}^{i,N}\Big|^{2p(\gamma+2)}\bigg]\bigg)^{\frac{1}{2}} \Delta t^{p} \notag \\
  \le&~  C \bigg(1+\Big(\mathbb{E}\Big[\big|Y_{0}\big|^{\bar{p}}\Big]\Big)^{\beta}\bigg) \Delta t^{p}.
 \end{align}

From Assumption \ref{ass:Coefficient-comparison-conditions-of-Gamma1-Gamma4}, together with \eqref{ineq:growth-condition-of-g}, the $\mathbb{W}_2$ estimate \eqref{eq:W_2^p} and Lemma \ref{lem:moment-bounds-of-numerical-solution}, it follows that 
 \begin{align} \label{Inter-proce:J2}
     \mathbb{E}\left[\left|J_{2}\right|^{p}\right]
     \le &~C \mathbb{E} \left[\left|g_{j_{1}}^{(v)}\left(Y_{\tau_{n}(s)}^{i,N},\rho_{\tau_{n}(s)}^{Y,N}\right)\right|^{p\gamma_{2}}\right]\Delta t^{p\delta_{2}} \notag \\
     \le &~C \mathbb{E} \bigg[\Big(1+\left|Y_{\tau_{n}(s)}^{i,N}\right|^{\frac{\gamma}{2}+1}+\mathbb{W}_{2}\left(\rho_{\tau_{n}(s)}^{Y,N},\delta_{0}\right)\Big)^{p\gamma_{2}}\bigg]\Delta t^{p\delta_{2}} 
     \le ~C \bigg(1+\Big(\mathbb{E}\Big[\big|Y_{0}\big|^{\bar{p}}\Big]\Big)^{\beta}\bigg) \Delta t^{p\delta_{2}}.
 \end{align}

Based on \eqref{eq:continuous-version-of-MMS}, one can infer that
 \begin{align*} 
    J_{3} 
   =&~\partial_{y}g_{j_{1}}^{(v)\top}\left(Y_{\tau_{n}(s)}^{i,N},\rho_{\tau_{n}(s)}^{Y,N}\right) \bigg\{ \int_{\tau_{n}(s)}^{s}\Gamma_{1}\left(f \left(Y_{\tau_{n}(r)}^{i, N}, \rho_{\tau_{n}(r)}^{Y, N} \right),\Delta t \right) \mathrm{d}r \notag \\
   &+ \sum_{j_{2}=1}^{m} \int_{\tau_{n}(s)}^{s}\Gamma_{2}\left(g_{j_{2}} \left(Y_{\tau_{n}(r)}^{i, N}, \rho_{\tau_{n}(r)}^{Y, N} \right), \Delta t\right)-g_{j_{2}} \left(Y_{\tau_{n}(r)}^{i, N}, \rho_{\tau_{n}(r)}^{Y, N} \right)  \,\mathrm{d} W^{i,j_{2}}_{r}  \notag \\
   &~+\sum_{j_{2},j_{3}=1}^{m}  \int_{\tau_{n}(s)}^{s} \int_{\tau_{n}(s)}^{r} \Gamma_{3}\left(\mathcal{L}_{y}^{j_{3}}~g_{j_{2}}\left(Y_{\tau_{n}(r_{1})}^{i, N}, \rho_{\tau_{n}(r_{1})}^{Y, N} \right),\Delta t\right) \,\mathrm{d} W^{i,j_{3}}_{r_{1}}\,\mathrm{d} W^{i,j_{2}}_{r}  \notag \\
   &~+ \frac{1}{N}\sum_{j_{2},j_{3}=1}^{m} \sum_{k_{1}=1}^{N} \int_{\tau_{n}(s)}^{s}  \int_{\tau_{n}(s)}^{r} \Gamma_{4}\left( \mathcal{L}_{\rho}^{j_{3}}~g_{j_{2}} \left(Y_{\tau_{n}(r_{1})}^{i,N},\rho_{\tau_{n}(r_{1})}^{Y,N},Y_{\tau_{n}(r_{1})}^{k_{1},N}\right) ,\Delta t\right)\,\mathrm{d} W^{k_{1},j_{3}}_{r_{1}}\,\mathrm{d} W^{i,j_{2}}_{r} \notag \\
   &~+ \sum_{j_{2}=1}^{m} \int_{\tau_{n}(s)}^{s}g_{j_{2}} \left(Y_{\tau_{n}(r)}^{i, N}, \rho_{\tau_{n}(r)}^{Y, N}\right) \,\mathrm{d} W^{i,j_{2}}_{r}  \bigg\}
    ~-\sum_{j_{2}=1}^{m} \int_{\tau_{n}(s)}^{s} \Gamma_{3}\left(\mathcal{L}_{y}^{j_{2}}~g_{j_{1}}^{(v)}\left(Y_{\tau_{n}(r)}^{i, N}, \rho_{\tau_{n}(r)}^{Y, N} \right),\Delta t\right)\,   \mathrm{d}W^{i,j_{2}}_{r}.
\end{align*}
Applying the Cauchy-Schwarz inequality, properties of conditional expectation, $\rm H\ddot{o}lder$'s inequality and moment inequalities, we deduce that
\begin{align*}  
  \mathbb{E}\left[\left|J_{3}\right|^{p}\right] 
   \le &~C \mathbb{E}\Bigg[\left|\partial_{y}g_{j_{1}}^{(v)}\left(Y_{\tau_{n}(s)}^{i,N},\rho_{\tau_{n}(s)}^{Y,N}\right)\right|^{p} ~\bigg\{\Big|\Gamma_{1}\left(f \left(Y_{\tau_{n}(s)}^{i, N}, \rho_{\tau_{n}(s)}^{Y, N} \right),\Delta t \right) \Big|^{p} \Delta t^{p} \notag \\
   &\qquad \quad +\sum_{j_{2}=1}^{m} \Big|\Gamma_{2}\left(g_{j_{2}} \left(Y_{\tau_{n}(s)}^{i, N}, \rho_{\tau_{n}(s)}^{Y, N} \right), \Delta t\right) -g_{j_{2}} \left(Y_{\tau_{n}(s)}^{i, N}, \rho_{\tau_{n}(s)}^{Y, N} \right) \Big|^{p} \Delta t^{\frac{p}{2}} \notag \\
   &\qquad \quad +\sum_{j_{2},j_{3}=1}^{m} \Big| \Gamma_{3}\left(\mathcal{L}_{y}^{j_{3}}~g_{j_{2}}\left(Y_{\tau_{n}(s)}^{i, N}, \rho_{\tau_{n}(s)}^{Y, N} \right),\Delta t\right) \Big|^{p}~\Delta t^{p} \notag \\
   &\qquad \quad +\frac{1}{N}\sum_{j_{2},j_{3}=1}^{m} \sum_{k_{1}=1}^{N} \Big|
    \Gamma_{4}\left( \mathcal{L}_{\rho}^{j_{3}}~g_{j_{2}} \left(Y_{\tau_{n}(s)}^{i,N},\rho_{\tau_{n}(s)}^{Y,N},Y_{\tau_{n}(s)}^{k_{1},N}\right) ,\Delta t\right) \Big|^{p} \Delta t^{p}~\bigg\}\Bigg] \notag  \\
   &~+C\mathbb{E} \bigg[\sum_{j_{2}=1}^{m} \Big|   \mathcal{L}_{y}^{j_{2}}~g_{j_{1}}^{(v)}\left(Y_{\tau_{n}(s)}^{i, N}, \rho_{\tau_{n}(s)}^{Y, N} \right)-\Gamma_{3}\left(\mathcal{L}_{y}^{j_{2}}~g_{j_{1}}^{(v)}\left(Y_{\tau_{n}(s)}^{i, N}, \rho_{\tau_{n}(s)}^{Y, N} \right),\Delta t\right) \Big|^{p}  \Delta t^{\frac{p}{2}}\bigg].
\end{align*}
In view of Assumptions \ref{ass:Gamma-control-conditions} and \ref{ass:Coefficient-comparison-conditions-of-Gamma1-Gamma4}, as well as the growth conditions of $f$, $g$, $\partial_{y}g_{j}$, and $\partial_{\rho}g_{j}$ given in \eqref{ineq:growth-condition-of-f}, \eqref{ineq:growth-condition-of-g}, \eqref{ineq:growth-condition-of-g-y}, and \eqref{ineq:growth-condition-of-g-mu}, respectively, together with Young's inequality, the $\mathbb{W}_{2}$ estimate \eqref{eq:W_2^p}, and Lemma \ref{lem:moment-bounds-of-numerical-solution}, it follows that for all $p \in [2, \hat{p}]$,
\begin{align}  \label{Inter-proce:J3} 
   \mathbb{E}\left[\left|J_{3}\right|^{p}\right] 
   \le
   &~ C \mathbb{E}\left[1+\left|Y_{\tau_{n}(s)}^{i,N}\right|^{\left(\frac{\gamma}{2} +\left(\frac{\gamma}{2}+1\right)\gamma_{2}\right) p}+\mathbb{W}_{2}^{\left(\left(\frac{\gamma}{2}+1\right)\gamma_{2}+1\right)p}\left(\rho_{\tau_{n}(s)}^{Y,N},\delta_{0}\right)\right] \Delta t^{\left(\frac{1}{2}+\delta_{2}\right)p}\notag \\
   &~+ C \mathbb{E}\left[1+\left|Y_{\tau_{n}(s)}^{i,N}\right|^{\left(\frac{3}{2}\gamma + 2\right) p}+\frac{1}{N}\sum_{k_{1}=1}^{N}\left|Y_{\tau_{n}(s)}^{k_{1},N}\right|^{\left(\frac{3}{2}\gamma+2\right)p}+\mathbb{W}_{2}^{\left(\gamma+2\right)p}\left(\rho_{\tau_{n}(s)}^{Y,N},\delta_{0}\right)\right] \Delta t^{p} \notag \\
   &~+ C \mathbb{E}\left[1+\left|Y_{\tau_{n}(s)}^{i,N}\right|^{\left(\gamma+1\right)\gamma_{3} p}+\mathbb{W}_{2}^{\left(\frac{\gamma}{2}+2\right)\gamma_{3}p}\left(\rho_{\tau_{n}(s)}^{Y,N},\delta_{0}\right)\right] \Delta t^{\left(\frac{1}{2}+\delta_{3}\right)p}\notag \\
   \le &~  C \bigg(1+\Big(\mathbb{E}\Big[\big|Y_{0}\big|^{\bar{p}}\Big]\Big)^{\beta}\bigg) \Delta t^{p}.
\end{align}

By further applying \eqref{eq:continuous-version-of-MMS}, we deduce that
 \begin{align*}  
   J_{4}
  =&~ \frac{1}{N}\sum_{k_{1}=1}^{N}\partial_{\rho} g_{j_{1}}^{(v)\top}\left(Y_{\tau_{n}(s)}^{i,N},\rho_{\tau_{n}(s)}^{Y,N},Y_{\tau_{n}(s)}^{k_{1},N}\right) \bigg\{ \int_{\tau_{n}(s)}^{s}\Gamma_{1}\left(f \left(Y_{\tau_{n}(r)}^{k_{1}, N}, \rho_{\tau_{n}(r)}^{Y, N} \right),\Delta t \right) \mathrm{d}r  \notag \\
  &~\qquad + \sum_{j_{2}=1}^{m} \int_{\tau_{n}(s)}^{s}\Gamma_{2}\left(g_{j_{2}} \left(Y_{\tau_{n}(r)}^{k_{1}, N}, \rho_{\tau_{n}(r)}^{Y, N} \right), \Delta t\right)
  -g_{j_{2}} \left(Y_{\tau_{n}(r)}^{k_{1}, N}, \rho_{\tau_{n}(r)}^{Y, N} \right) \, \mathrm{d}W^{k_{1},j_{2}}_{r} \notag \\ 
  &~\qquad + \sum_{j_{2},j_{3}=1}^{m} \int_{\tau_{n}(s)}^{s} \int_{\tau_{n}(s)}^{r}  \Gamma_{3}\left(\mathcal{L}_{y}^{j_{3}}~g_{j_{2}}\left(Y_{\tau_{n}(r_{1})}^{k_{1}, N}, \rho_{\tau_{n}(r_{1})}^{Y, N} \right),\Delta t\right) \,\mathrm{d} W^{k_{1},j_{3}}_{r_{1}} \,\mathrm{d} W^{k_{1},j_{2}}_{r}   \notag \\
  &~\qquad  + \frac{1}{N}\sum_{j_{2},j_{3}=1}^{m} \sum_{k_{2}=1}^{N} \int_{\tau_{n}(s)}^{s}\int_{\tau_{n}(s)}^{r}    \Gamma_{4}\left( \mathcal{L}_{\rho}^{j_{3}}~g_{j_{2}} \left(Y_{\tau_{n}(r_{1})}^{k_{1},N},\rho_{\tau_{n}(r_{1})}^{Y,N},Y_{\tau_{n}(r_{1})}^{k_{2},N}\right) ,\Delta t\right) \,\mathrm{d} W^{k_{2},j_{3}}_{r_{1}} \, \mathrm{d} W^{k_{1},j_{2}}_{r} \notag \\
  &~\qquad + \sum_{j_{2}=1}^{m} \int_{\tau_{n}(s)}^{s} g_{j_{2}} \left(Y_{\tau_{n}(r)}^{k_{1}, N}, \rho_{\tau_{n}(r)}^{Y, N} \right) \mathrm{d} W^{k_{1},j_{2}}_{r} \bigg\} \notag \\
  &~- \frac{1}{N}\sum_{j_{2}=1}^{m} \sum_{k_{1}=1}^{N} \int_{\tau_{n}(s)}^{s} \Gamma_{4}\left( \mathcal{L}_{\rho}^{j_{2}}~g_{j_{1}}^{(v)} \left(Y_{\tau_{n}(r)}^{i,N},\rho_{\tau_{n}(r)}^{Y,N},Y_{\tau_{n}(r)}^{k_{1},N}\right) ,\Delta t\right)   \mathrm{d} W^{k_{1},j_{2}}_{r}.
 \end{align*}
 Utilizing a technique analogous to the one demonstrated in $\mathbb{E}\left[|J_{3}|^{p}\right]$, we deduce that for all $p\in[2,\hat{p}]$, 
 \begin{align} \label{Inter-proce:J4}
    \mathbb{E}\left[\left|J_{4}\right|^{p}\right]
  \le ~C \bigg(1+\Big(\mathbb{E}\Big[\big|Y_{0}\big|^{\bar{p}}\Big]\Big)^{\beta}\bigg) \Delta t^{p}.
\end{align}
This completes the proof by substituting \eqref{Inter-proce:J1}-\eqref {Inter-proce:J4} into \eqref{Inter-proce:J}.
\end{proof}

To facilitate subsequent convergence analysis, we introduce the following notation.
Let
\begin{align} \label{defi:V}
V(\partial_{y}f^{(v)},\Gamma_{g}) :=&\sum_{j_{1}=1}^{m} \partial_{y}f^{(v)\top}\left(Y_{\tau_{n}(s)}^{i,N},\rho_{\tau_{n}(s)}^{Y,N} \right) \int_{\tau_{n}(s)}^{s} \Gamma_{g} \left(\hat{g}_{j_{1}}\left(r,Y_{\tau_{n}(r)}^{i,N},\rho_{\tau_{n}(r)}^{Y,N}\right),\Delta t \right) \,\mathrm{d} W^{i,j_{1}}_{r},   \notag \\
\bar{V}(\partial_{y}f^{(v)},\Gamma_{g}) :=&\sum_{j_{1}=1}^{m} \partial_{y}f^{(v)\top}\left(Y_{\tau_{n}(s)}^{i,N},\rho_{\tau_{n}(s)}^{Y,N} \right) \Gamma_{g} \left(\hat{g}_{j_{1}}\left(s,Y_{\tau_{n}(s)}^{i,N},\rho_{\tau_{n}(s)}^{Y,N}\right),\Delta t \right),  \notag \\
 V(\partial_{y}f^{(v)},\Gamma_{g},\Gamma_{2}) :=&\sum_{j_{1}=1}^{m} \partial_{y}f^{(v)\top}\left(Y_{\tau_{n}(s)}^{i,N},\rho_{\tau_{n}(s)}^{Y,N} \right) \int_{\tau_{n}(s)}^{s} \bigg(\Gamma_{g} \left(\hat{g}_{j_{1}}\left(r,Y_{\tau_{n}(r)}^{i,N},\rho_{\tau_{n}(r)}^{Y,N}\right),\Delta t \right)  \notag \\ 
 &~\qquad  \qquad \qquad    -\Gamma_{2}\left(g_{j_{1}}\left(Y_{\tau_{n}(r)}^{i,N},\rho_{\tau_{n}(r)}^{Y,N}\right),\Delta t\right) \bigg) \, \mathrm{d} W^{i,j_{1}}_{r}.  
\end{align}

\begin{lemma} \label{lem:V_estimate}
Let Assumptions \ref{ass:assumptions_for_MV_coefficients} and \ref{ass:Initial-value}-\ref{ass:Gamma-control-conditions} hold. Then, for any $ q\ge 2$, there exists $C>0$ such that 
\begin{align}
\mathbb{E}\left[\Big|V(\partial_{y}f^{(v)},\Gamma_{g})\Big|^{q}\right] &\le~C\mathbb{E}\big[\Phi\big] \Delta t^{\frac{q}{2}}, \label{ineq:V}\\ 
\mathbb{E}\Big[\Big|\bar{V} (\partial_{y}f^{(v)},\Gamma_{g})\Big|^{q}\Big] \le~ C\mathbb{E}\left[\Phi\right],  \quad &
\mathbb{E}\left[\left|V(\partial_{y}f^{(v)},\Gamma_{g},\Gamma_{2})\right|^{q}\right] \le~C\mathbb{E}\big[\Phi\big] \Delta t^{q},  \label{ineq:V-Gamma_2} 
\end{align}  
where $V(\partial_{y}f^{(v)},\Gamma_{g})$, $\bar{V} (\partial_{y}f^{(v)},\Gamma_{g})$, and $V(\partial_{y}f^{(v)},\Gamma_{g},\Gamma_{2})$  are defined in \eqref{defi:V}, and 
 \begin{equation*}
 \Phi:=\Phi\Big(Y_{\tau_{n}(s)}^{i,N},\!~Y_{\tau_{n}(s)}^{k_{1},N},\!~\mathbb{W}_{2}\big(\rho_{\tau_{n}(s)}^{Y,N},\delta_{0}\big)\Big) 
 =1+\!~\Big|Y_{\tau_{n}(s)}^{i,N}\Big|^{(2\gamma+2)q}+\!~\frac{1}{N}\sum_{k_{1}=1}^{N}\Big|Y_{\tau_{n}(s)}^{k_{1},N}\Big|^{(2\gamma+2)q}+\!~\mathbb{W}_{2}^{\left(\frac{3}{2}\gamma+2\right)q}\big(\rho_{\tau_{n}(s)}^{Y,N},\delta_{0}\big).
 \end{equation*}
\end{lemma}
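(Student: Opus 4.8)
The three estimates share one underlying structure, so the plan is to carry out \eqref{ineq:V} in detail and then indicate the (minor) modifications needed for the two bounds in \eqref{ineq:V-Gamma_2}. The pivotal structural observation is that $\tau_{n}(r)=\tau_{n}(s)$ for every $r\in[\tau_{n}(s),s]$; hence all coefficients entering $\Gamma_{g}$ in \eqref{eq:Gamma_g} — namely $g_{j_{1}}(Y_{\tau_{n}(\cdot)}^{i,N},\rho_{\tau_{n}(\cdot)}^{Y,N})$, $\mathcal{L}_{y}^{j_{2}}g_{j_{1}}(Y_{\tau_{n}(\cdot)}^{i,N},\rho_{\tau_{n}(\cdot)}^{Y,N})$ and $\mathcal{L}_{\rho}^{j_{2}}g_{j_{1}}(Y_{\tau_{n}(\cdot)}^{i,N},\rho_{\tau_{n}(\cdot)}^{Y,N},Y_{\tau_{n}(\cdot)}^{k_{1},N})$ — together with $\partial_{y}f^{(v)}(Y_{\tau_{n}(s)}^{i,N},\rho_{\tau_{n}(s)}^{Y,N})$ are constant in $r$ over this interval and $\mathcal{F}_{\tau_{n}(s)}$-measurable, so the inner iterated integrals in \eqref{eq:Gamma_g} collapse to products of these $\mathcal{F}_{\tau_{n}(s)}$-measurable coefficients with the Brownian increments $W_{r}^{\cdot,j_{2}}-W_{\tau_{n}(s)}^{\cdot,j_{2}}$. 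I will use Assumption \ref{ass:Gamma-control-conditions} in the pointwise form $|\Gamma_{l}(\mathds{F}_{l}(\cdot),\Delta t)|\le|\mathds{F}_{l}(\cdot)|$, the polynomial growth bounds \eqref{ineq:growth-condition-of-f-y}, \eqref{ineq:growth-condition-of-g}, \eqref{ineq:growth-condition-of-g-y}, \eqref{ineq:growth-condition-of-g-mu}, and the identity $\mathbb{W}_{2}(\rho_{\tau_{n}(s)}^{Y,N},\delta_{0})=\big(\tfrac1N\sum_{k_{1}}|Y_{\tau_{n}(s)}^{k_{1},N}|^{2}\big)^{1/2}$.

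\textit{Step 1 (conditioning and the Burkholder-Davis-Gundy chain).}
First I would split $|V(\partial_{y}f^{(v)},\Gamma_{g})|^{q}\le C\sum_{j_{1}=1}^{m}|\partial_{y}f^{(v)}(Y_{\tau_{n}(s)}^{i,N},\rho_{\tau_{n}(s)}^{Y,N})|^{q}|I_{j_{1}}|^{q}$ with $I_{j_{1}}:=\int_{\tau_{n}(s)}^{s}\Gamma_{g}(\hat{g}_{j_{1}}(r,\cdot),\Delta t)\,\mathrm{d}W_{r}^{i,j_{1}}$, pass to $\mathbb{E}[\,\cdot\mid\mathcal{F}_{\tau_{n}(s)}]$ pulling out the $\mathcal{F}_{\tau_{n}(s)}$-measurable factor, and apply the conditional Burkholder-Davis-Gundy inequality followed by H\"older's inequality in $r$ on an interval of length $\le\Delta t$ to obtain $\mathbb{E}[|I_{j_{1}}|^{q}\mid\mathcal{F}_{\tau_{n}(s)}]\le C\Delta t^{q/2-1}\int_{\tau_{n}(s)}^{s}\mathbb{E}[|\Gamma_{g}(\hat{g}_{j_{1}}(r,\cdot))|^{q}\mid\mathcal{F}_{\tau_{n}(s)}]\,\mathrm{d}r$. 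Inserting the collapsed form of $\Gamma_{g}$, using $\mathbb{E}[|W_{r}^{\cdot,j_{2}}-W_{\tau_{n}(s)}^{\cdot,j_{2}}|^{q}\mid\mathcal{F}_{\tau_{n}(s)}]\le C\Delta t^{q/2}$ (independent increments), and Jensen's inequality to move $q$-th powers inside $\tfrac1N\sum_{k_{1}}$, I expect $\mathbb{E}[|\Gamma_{g}(\hat{g}_{j_{1}}(r,\cdot))|^{q}\mid\mathcal{F}_{\tau_{n}(s)}]\le C\Xi$, where $\Xi:=|\Gamma_{2}(g_{j_{1}}(\cdot))|^{q}+\sum_{j_{2}}|\Gamma_{3}(\mathcal{L}_{y}^{j_{2}}g_{j_{1}}(\cdot))|^{q}+\tfrac1N\sum_{k_{1},j_{2}}|\Gamma_{4}(\mathcal{L}_{\rho}^{j_{2}}g_{j_{1}}(\cdot))|^{q}$ is $\mathcal{F}_{\tau_{n}(s)}$-measurable; hence $\mathbb{E}[|I_{j_{1}}|^{q}\mid\mathcal{F}_{\tau_{n}(s)}]\le C\Delta t^{q/2}\Xi$. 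Multiplying back by $|\partial_{y}f^{(v)}|^{q}$ and taking expectations, \eqref{ineq:V} reduces to the moment estimate $\mathbb{E}[|\partial_{y}f^{(v)}(Y_{\tau_{n}(s)}^{i,N},\rho_{\tau_{n}(s)}^{Y,N})|^{q}\Xi]\le C\,\mathbb{E}[\Phi]$.

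\textit{Step 2 (growth bounds, Young's inequality, and the other two bounds).}
Here one combines $|\Gamma_{l}(\mathds{F}_{l})|\le|\mathds{F}_{l}|$ with \eqref{ineq:growth-condition-of-f-y}--\eqref{ineq:growth-condition-of-g-mu} and the factorisations $\mathcal{L}_{y}^{j_{2}}g_{j_{1}}=\partial_{y}g_{j_{1}}\,g_{j_{2}}$, $\mathcal{L}_{\rho}^{j_{2}}g_{j_{1}}(Y^{i},\rho,Y^{k_{1}})=\partial_{\rho}g_{j_{1}}(Y^{i},\rho,Y^{k_{1}})\,g_{j_{2}}(Y^{k_{1}},\rho)$; one then checks that the highest joint polynomial degree of $|\partial_{y}f^{(v)}|^{q}\Xi$ in $(Y_{\tau_{n}(s)}^{i,N},Y_{\tau_{n}(s)}^{k_{1},N})$ equals $(2\gamma+2)q$ — attained through $\partial_{y}f^{(v)}\cdot\Gamma_{4}(\mathcal{L}_{\rho}^{j_{2}}g_{j_{1}})$ — while the highest power of $\mathbb{W}_{2}(\rho_{\tau_{n}(s)}^{Y,N},\delta_{0})$ that occurs is at most $(\tfrac32\gamma+2)q$; a (weighted) Young inequality applied monomial-by-monomial, together with the Jensen identity for $\mathbb{W}_{2}$, then gives the pointwise bound $|\partial_{y}f^{(v)}|^{q}\Xi\le C\Phi$, which proves \eqref{ineq:V}. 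For $\bar{V}(\partial_{y}f^{(v)},\Gamma_{g})$ there is no outer stochastic integral, so the conditioning of Step 1 directly yields $\mathbb{E}[|\bar{V}(\partial_{y}f^{(v)},\Gamma_{g})|^{q}]\le C\,\mathbb{E}[|\partial_{y}f^{(v)}|^{q}\Xi]\le C\,\mathbb{E}[\Phi]$, where only $\Delta t^{q/2}\le C$ is used, which is why no power of $\Delta t$ survives. For $V(\partial_{y}f^{(v)},\Gamma_{g},\Gamma_{2})$, after the collapse the integrand of the outer integral is $\sum_{j_{2}}\Gamma_{3}(\mathcal{L}_{y}^{j_{2}}g_{j_{1}}(\cdot))(W_{r}^{i,j_{2}}-W_{\tau_{n}(s)}^{i,j_{2}})+\tfrac1N\sum_{k_{1},j_{2}}\Gamma_{4}(\mathcal{L}_{\rho}^{j_{2}}g_{j_{1}}(\cdot))(W_{r}^{k_{1},j_{2}}-W_{\tau_{n}(s)}^{k_{1},j_{2}})$, so it already carries one Brownian increment; the same Burkholder-Davis-Gundy/H\"older/conditioning chain then yields an extra factor $\Delta t^{q/2}$ on top of the $\Delta t^{q/2}$ of the outer integral, for the total $\Delta t^{q}$, after which Step 2 closes the estimate.

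The main obstacle is the bookkeeping in Step 2: one must verify, via weighted Young's and Jensen's inequalities, that \emph{every} monomial occurring in $|\partial_{y}f^{(v)}|^{q}\Xi$ — in particular the mixed terms of the form $|Y_{\tau_{n}(s)}^{i,N}|^{a}|Y_{\tau_{n}(s)}^{k_{1},N}|^{b}\,\mathbb{W}_{2}^{c}(\rho_{\tau_{n}(s)}^{Y,N},\delta_{0})$ — is dominated by the three genuine terms of $\Phi$; this constraint is exactly what dictates the exponents $2\gamma+2$ and $\tfrac32\gamma+2$ in the definition of $\Phi$ (a smaller $\mathbb{W}_{2}$-exponent such as $3$ would fail for large $\gamma$).
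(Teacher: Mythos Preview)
Your proposal is correct and follows essentially the same route as the paper: condition on $\mathcal{F}_{\tau_{n}(s)}$, pull out $|\partial_{y}f^{(v)}|^{q}$, apply a BDG/H\"older chain to the outer stochastic integral, expand $\Gamma_{g}$ into its $\Gamma_{2},\Gamma_{3},\Gamma_{4}$ parts, and close with the growth bounds \eqref{ineq:growth-condition-of-g}, \eqref{ineq:growth-condition-of-f-y}, \eqref{ineq:growth-condition-of-g-y}, \eqref{ineq:growth-condition-of-g-mu} together with Young's inequality. The only cosmetic difference is that you exploit the collapse $\int_{\tau_{n}(s)}^{r}\Gamma_{3}(\cdot)\,\mathrm{d}W^{i,j_{2}}=\Gamma_{3}(\cdot)\big(W_{r}^{i,j_{2}}-W_{\tau_{n}(s)}^{i,j_{2}}\big)$ and use direct Brownian-moment bounds for the inner integrals, whereas the paper applies a second moment inequality there; both yield the same $\Delta t$-powers and the same polynomial $\Phi$.
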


\begin{proof}
Using the Cauchy-Schwarz inequality, moment inequalities together with \eqref{eq:Gamma_g}, one can deduce that
\begin{align*}
&\mathbb{E}\left[\Big|V(\partial_{y}f^{(v)},\Gamma_{g})\right|^{q}\Big]  \notag \\
\le &~C \sum_{j_{1}=1}^{m} \mathbb{E}\bigg[\left|\partial_{y}f^{(v)}\left(Y_{\tau_{n}(s)}^{i,N},\rho_{\tau_{n}(s)}^{Y,N} \right)\right|^{q}  \mathbb{E}\Big(\int_{\tau_{n}(s)}^{s} \left|\Gamma_{g} \left(\hat{g}_{j_{1}}\left(r,Y_{\tau_{n}(r)}^{i,N},\rho_{\tau_{n}(r)}^{Y,N}\right),\Delta t \right)\right|^{q} \mathrm{d} r \Big \arrowvert \mathcal{F}_{\tau_{n}(s)}\Big)\bigg] \Delta t^{\frac{q}{2}-1} \notag \\
\le &~C\sum_{j_{1}=1}^{m} \mathbb{E}\bigg[\left|\partial_{y}f^{(v)}\left(Y_{\tau_{n}(s)}^{i,N},\rho_{\tau_{n}(s)}^{Y,N} \right)\right|^{q} \bigg\{\,\mathbb{E}\bigg(\int_{\tau_{n}(s)}^{s} \left|\Gamma_{2} \left(g_{j_{1}}\left(Y_{\tau_{n}(r)}^{i,N},\rho_{\tau_{n}(r)}^{Y,N}\right),\Delta t \right)\right|^{q} \,\mathrm{d} r\,\Delta t^{\frac{q}{2}-1} \notag \\
&~\qquad \qquad \quad + \frac{1}{N} \sum_{k_{1}=1}^{N} \sum_{j_{2}=1}^{m}    \int_{\tau_{n}(s)}^{s}  \int_{\tau_{n}(s)}^{r} \Big| \Gamma_{4}\left( \mathcal{L}_{\rho}^{j_{2}}~g_{j_{1}} \left(Y_{\tau_{n}(r_{1})}^{i,N},\rho_{\tau_{n}(r_{1})}^{Y,N},Y_{\tau_{n}(r_{1})}^{k_{1},N}\right) ,\Delta t\right) \Big|^{q}  \,\mathrm{d}r_{1} \,\mathrm{d} r  \, \Delta t^{q-2}  \notag \\
&~\qquad \qquad \quad + \sum_{j_{2}=1}^{m} \int_{\tau_{n}(s)}^{s} \int_{\tau_{n}(s)}^{r} \Big| \Gamma_{3}\left(\mathcal{L}_{y}^{j_{2}}~g_{j_{1}}\left(Y_{\tau_{n}(r_{1})}^{i, N}, \rho_{\tau_{n}(r_{1})}^{Y, N} \right),\Delta t\right)\Big|^{q}  \,\mathrm{d}r_{1} \,\mathrm{d} r \,\Delta t^{q-2}\Big \arrowvert \mathcal{F}_{\tau_{n}(s)}\bigg) \bigg\}\,\bigg] .
\end{align*}
Based on Assumption \ref{ass:Gamma-control-conditions}, \eqref{ineq:growth-condition-of-g}, \eqref{ineq:growth-condition-of-f-y}, \eqref{ineq:growth-condition-of-g-y} and \eqref{ineq:growth-condition-of-g-mu}, one can get
\begin{align}
\mathbb{E}\left[\Big|V(\partial_{y}f^{(v)},\Gamma_{g})\Big|^{q}\right] 
\le &~C\sum_{j_{1}=1}^{m} \mathbb{E}\bigg[\left|\partial_{y}f^{(v)}\left(Y_{\tau_{n}(s)}^{i,N},\rho_{\tau_{n}(s)}^{Y,N} \right)\right|^{q} \bigg\{\, \left|g_{j_{1}}\left(Y_{\tau_{n}(s)}^{i,N},\rho_{\tau_{n}(s)}^{Y,N}\right)\right|^{q}\,\Delta t^{\frac{q}{2}} \notag \\
&~\qquad \qquad \quad + \frac{1}{N} \sum_{k_{1}=1}^{N}\sum_{u=1}^{d}  \Big|\partial_{\rho}g_{j_{1}}^{(u)} \left(Y_{\tau_{n}(s)}^{i,N},\rho_{\tau_{n}(s)}^{Y,N},Y_{\tau_{n}(s)}^{k_{1},N}\right) \Big|^{q} \Big|g \left(Y_{\tau_{n}(s)}^{k_{1},N},\rho_{\tau_{n}(s)}^{Y,N}\right) \Big|^{q}  \, \Delta t^{q}  \notag \\
&~\qquad \qquad \quad + \sum_{u=1}^{d} \Big| \partial_{y}g_{j_{1}}^{(u)}\left(Y_{\tau_{n}(s)}^{i, N}, \rho_{\tau_{n}(s)}^{Y, N} \right)\Big|^{q} \Big|g\left(Y_{\tau_{n}(s)}^{i, N}, \rho_{\tau_{n}(s)}^{Y, N} \right)\Big|^{q} \,\Delta t^{q} \bigg\}\,\bigg] \notag \\
\le &~C\mathbb{E}\bigg[1+\left|Y_{\tau_{n}(s)}^{i,N}\right|^{\left(2\gamma+2\right)q}+\frac{1}{N}\sum_{k_{1}=1}^{N}\left|Y_{\tau_{n}(s)}^{k_{1},N}\right|^{(2\gamma+2)q}+\mathbb{W}_{2}^{\left(\frac{3}{2}\gamma+2\right)q}\left(\rho_{\tau_{n}(s)}^{Y,N},\delta_{0}\right)\bigg] \Delta t^{\frac{q}{2}}.
\end{align}
Hence, \eqref{ineq:V} follows directly, and with minor adjustments to the argument, the estimate in \eqref{ineq:V-Gamma_2} can also be established.
\end{proof}

Similarly, we are also given the following notation. Let 
\begin{align} \label{eq:mathcal_V}
 &\mathcal{V}\left(\partial_{\rho}f^{(v)},\Gamma_{g}\right)  
 :=~\frac{1}{N}\sum_{k_{1}=1}^{N}\partial_{\rho}f^{(v)\top}\Big(Y_{\tau_{n}(s)}^{i,N},\rho_{\tau_{n}(s)}^{Y,N},Y_{\tau_{n}(s)}^{k_{1},N} \Big) \sum_{j_{1}=1}^{m} \int_{\tau_{n}(s)}^{s} \Gamma_{g} \Big(\hat{g}_{j_{1}}\big(r,Y_{\tau_{n}(r)}^{k_{1},N},\rho_{\tau_{n}(r)}^{Y,N}\big),\Delta t \Big)\,\mathrm{d} W^{k_{1},j_{1}}_{r},  \notag  \\
 &\bar{\mathcal{V}}\left(\partial_{\rho}f^{(v)},\Gamma_{g}\right) 
 :=\frac{1}{N}\sum_{k_{1}=1}^{N}\partial_{\rho}f^{(v)\top}\left(Y_{\tau_{n}(s)}^{i,N},\rho_{\tau_{n}(s)}^{Y,N},Y_{\tau_{n}(s)}^{k_{1},N} \right) \sum_{j_{1}=1}^{m} \Gamma_{g} \left(\hat{g}_{j_{1}}\left(s,Y_{\tau_{n}(s)}^{k_{1},N},\rho_{\tau_{n}(s)}^{Y,N}\right),\Delta t \right),  \notag \\ 
 &\mathcal{V}\left(\partial_{\rho}f^{(v)},\Gamma_{g},\Gamma_{2}\right)  
 :=~\frac{1}{N}\sum_{k_{1}=1}^{N}\partial_{\rho}f^{(v)\top}\left(Y_{\tau_{n}(s)}^{i,N},\rho_{\tau_{n}(s)}^{Y,N},Y_{\tau_{n}(s)}^{k_{1},N} \right) \sum_{j_{1}=1}^{m} \int_{\tau_{n}(s)}^{s}\bigg( \Gamma_{g} \left(\hat{g}_{j_{1}}\left(r,Y_{\tau_{n}(r)}^{k_{1},N},\rho_{\tau_{n}(r)}^{Y,N}\right),\Delta t \right)  \notag \\ 
 &~\qquad  \qquad \qquad \qquad  \qquad     -\Gamma_{2}\left(g_{j_{1}}\left(Y_{\tau_{n}(r)}^{k_{1},N},\rho_{\tau_{n}(r)}^{Y,N}\right),\Delta t\right) \bigg)~\mathrm{d} W^{k_{1},j_{1}}_{r}.  
\end{align}

\begin{lemma} \label{lem:mathcal_V_estimate}
Let Assumptions \ref{ass:assumptions_for_MV_coefficients} and \ref{ass:Initial-value}-\ref{ass:Gamma-control-conditions} hold. Then, for any $ q\ge 2$, there exists $C>0$ such that 
\begin{align}
\mathbb{E}\left[\Big|\mathcal{V}(\partial_{\rho}f^{(v)},\Gamma_{g})\Big|^{q}\right] &\le~C\mathbb{E}\big[\Psi\big] \Delta t^{\frac{q}{2}}, \label{ineq:mathcal_V}\\ 
\mathbb{E}\Big[\Big|\bar{\mathcal{V}} (\partial_{\rho}f^{(v)},\Gamma_{g})\Big|^{q}\Big] \le~ C\mathbb{E}\left[\Psi\right],  \quad &
\mathbb{E}\left[\left|\mathcal{V}(\partial_{\rho}f^{(v)},\Gamma_{g},\Gamma_{2})\right|^{q}\right] \le~C\mathbb{E}\big[\Psi\big] \Delta t^{q},  \label{ineq:mathcal_V-Gamma_2} 
\end{align}  
where $\mathcal{V}(\partial_{\rho}f^{(v)},\Gamma_{g})$, $\bar{\mathcal{V}} (\partial_{\rho}f^{(v)},\Gamma_{g})$, and $\mathcal{V}(\partial_{\rho}f^{(v)},\Gamma_{g},\Gamma_{2})$  are defined in \eqref{eq:mathcal_V}, and 
\begin{equation*}
\Psi:=\Psi\Big(Y_{\tau_{n}(s)}^{i,N},\!~Y_{\tau_{n}(s)}^{k_{1},N},\!~\mathbb{W}_{2}\big(\rho_{\tau_{n}(s)}^{Y,N},\delta_{0}\big)\Big) 
= 1\!~+\big|Y_{\tau_{n}(s)}^{i,N}\big|^{\left(2\gamma+3\right)q}\!~+\frac{1}{N}\sum_{k_{1}=1}^{N}\big|Y_{\tau_{n}(s)}^{k_{1},N}\big|^{(2\gamma+3)q}\!~+\mathbb{W}_{2}^{\left(\frac{3\gamma}{2}+3\right)q}\big(\rho_{\tau_{n}(s)}^{Y,N},\delta_{0}\big).
\end{equation*}
\end{lemma}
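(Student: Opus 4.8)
The plan is to follow the argument used for Lemma~\ref{lem:V_estimate} almost verbatim, the only structural novelty being the empirical average $\frac1N\sum_{k_1=1}^N$ that now sits outside the stochastic term. First I would invoke the convexity of $x\mapsto|x|^q$ for $q\ge2$ to pass this average through $|\cdot|^q$ by Jensen's inequality, thereby reducing each of the three estimates to a per-particle bound in which $\partial_{\rho}f^{(v)}$, evaluated at $(Y^{i,N}_{\tau_n(s)},\rho^{Y,N}_{\tau_n(s)},Y^{k_1,N}_{\tau_n(s)})$, plays exactly the role that $\partial_y f^{(v)}$ played in Lemma~\ref{lem:V_estimate}, with the driving Brownian motion and the inner coefficient arguments attached to the particle $k_1$. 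The additional $\frac1N\sum_{k_2}$ hidden inside the $\Gamma_4$-contribution of $\Gamma_g$ in \eqref{eq:Gamma_g} is disposed of by one further application of Jensen.

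For the per-particle version of \eqref{ineq:mathcal_V} I would then (i) use the Cauchy--Schwarz (or Young) inequality to separate the $\mathcal F_{\tau_n(s)}$-measurable factor $|\partial_{\rho}f^{(v)}(\cdots)|^q$ from the stochastic integral; (ii) condition on $\mathcal F_{\tau_n(s)}$ and apply the Burkholder--Davis--Gundy / moment inequality to the outer $\int_{\tau_n(s)}^s\cdot\,\mathrm d W^{k_1,j_1}_r$, producing a factor $\Delta t^{q/2-1}\int_{\tau_n(s)}^s\mathbb E\big[\,|\Gamma_g(\hat g_{j_1}(r,\cdot),\Delta t)|^q\mid\mathcal F_{\tau_n(s)}\big]\,\mathrm dr$; (iii) expand $\Gamma_g$ via \eqref{eq:Gamma_g}, bound each $\Gamma_l$ by $|\mathds{F}_l|$ using Assumption~\ref{ass:Gamma-control-conditions}, and apply the moment inequality once more to the inner iterated integrals; (iv) insert the polynomial-growth bounds \eqref{ineq:growth-condition-of-g}, \eqref{ineq:growth-condition-of-g-y}, \eqref{ineq:growth-condition-of-g-mu} for $g,\partial_y g_j,\partial_{\rho}g_j$ and \eqref{ineq:growth-condition-of-f-mu} for $\partial_{\rho}f$. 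Multiplying out, the dominant term is $\partial_{\rho}f^{(v)}\cdot(\partial_{\rho}g_{j_1}\cdot g)$, of degree $(\gamma+1)+(\tfrac\gamma2+1)+(\tfrac\gamma2+1)=2\gamma+3$ in the state variables (and of degree $\tfrac{3\gamma}{2}+3$ in the $\mathbb W_2(\rho^{Y,N}_{\tau_n(s)},\delta_0)$ contribution after the usual Young-type redistribution of the mixed terms), while every other product is of lower order. Finally I would control $\mathbb W_2^q(\rho^{Y,N}_{\tau_n(s)},\delta_0)$ by the empirical moments through the Wasserstein estimate \eqref{eq:W_2^p} and re-average over $k_1$, which gives $C\,\mathbb E[\Psi]\,\Delta t^{q/2}$.

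The remaining two bounds need only minor modifications. For $\bar{\mathcal V}(\partial_{\rho}f^{(v)},\Gamma_g)$ the outer stochastic integral is absent, so after the Cauchy--Schwarz split one uses the pointwise bound $|\Gamma_g(\hat g_{j_1}(s,\cdot),\Delta t)|\le|g_{j_1}(\cdots)|+\cdots$ together with the same growth estimates, yielding $C\,\mathbb E[\Psi]$ with no power of $\Delta t$. For $\mathcal V(\partial_{\rho}f^{(v)},\Gamma_g,\Gamma_2)$ I would exploit that $\tau_n(r)=\tau_n(s)$ whenever $r\in[\tau_n(s),s]$, so that the integrand $\Gamma_g(\hat g_{j_1}(r,\cdot),\Delta t)-\Gamma_2(g_{j_1}(\cdot),\Delta t)$ collapses to the nested double-integral part of $\Gamma_g$; the term then carries an iterated stochastic integral over $[\tau_n(s),s]$, whose $L^q$-size is $O(\Delta t)$ rather than $O(\Delta t^{1/2})$, and this produces the sharper $C\,\mathbb E[\Psi]\,\Delta t^{q}$. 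Everything else is identical to the first case.

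I expect the main obstacle to be purely the exponent bookkeeping: one must carefully track the polynomial degrees propagating through the product $\partial_{\rho}f^{(v)}\times\Gamma_g$ and through the two layers of iterated integrals, and verify that the worst case is indeed $2\gamma+3$ (respectively $\tfrac{3\gamma}{2}+3$ for the Wasserstein contribution), so that, combined with the moment bound of Lemma~\ref{lem:moment-bounds-of-numerical-solution}, the constant $C\,\mathbb E[\Psi]$ remains finite under the standing moment hypothesis on $Y_0$. No analytic ingredient beyond those already used in the proof of Lemma~\ref{lem:V_estimate} is required.
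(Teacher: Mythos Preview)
Your proposal is correct and follows exactly the route the paper takes: the paper's own proof simply states that the argument of Lemma~\ref{lem:V_estimate} is repeated with $\partial_y f^{(v)\top}(Y^{i,N}_{\tau_n(s)},\rho^{Y,N}_{\tau_n(s)})$ replaced by $\tfrac1N\sum_{k_1}\partial_{\rho}f^{(v)\top}(Y^{i,N}_{\tau_n(s)},\rho^{Y,N}_{\tau_n(s)},Y^{k_1,N}_{\tau_n(s)})$ and the growth bound \eqref{ineq:growth-condition-of-f-mu} for $\partial_{\rho}f$ used in place of \eqref{ineq:growth-condition-of-f-y}. Your Jensen step for the empirical average, the conditioning/BDG argument, and the exponent count leading to $2\gamma+3$ (respectively $\tfrac{3\gamma}{2}+3$) are precisely the details that the paper leaves implicit.
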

\begin{proof}
The proof proceeds in the same manner as that of Lemma \ref{lem:V_estimate}, except that 
$\partial_{y}f^{(v)\top}\!\big(Y_{\tau_{n}(s)}^{i,N},\!\rho_{\tau_{n}(s)}^{Y,N}\big)$ is replaced by 
$\frac{1}{N}\!\sum\limits_{k_{1}=1}^{N}\!\partial_{\rho}f^{(v)\top}\!\big(Y_{\tau_{n}(s)}^{i,N},\!\rho_{\tau_{n}(s)}^{Y,N},\!Y_{\tau_{n}(s)}^{k_{1},N}\big) $, and the growth condition for $\partial_{\rho}f^{(v)}\!\big(Y_{\tau_{n}(s)}^{i,N},\!\rho_{\tau_{n}(s)}^{Y,N},\!Y_{\tau_{n}(s)}^{k_{1},N}\big)$ is employed to complete the argument.
\end{proof}

Finally, the following lemma plays an important role in the proof of the main theorem.
\begin{lemma} \label{lem:e_and_f_difference}
Let Assumptions \ref{ass:assumptions_for_MV_coefficients}, \ref{ass:Initial-value}-\ref{ass:Gamma-control-conditions}, and \ref{ass:Coefficient-comparison-conditions-of-Gamma1-Gamma4} hold. Then, for all $p\in[2,\tilde{p}\wedge \hat{p}]$, there exist constants $C,\beta>0$ such that 
\begin{align}
    &\mathbb{E}\left[\int_0^t \left|e^{i,N}_{s}\right|^{p-2} \left\langle e^{i,N}_{s}, \Delta f_{s,\tau_{n}(s)}^{Y,Y}\right\rangle \mathrm{d}s\right] \le C\int_{0}^{t} \sup_{i\in \mathcal{I}_{N}}\sup_{ r \in[0,s]} \mathbb{E}\left[\left|e_{r}^{i,N}\right|^{p}\right] \mathrm{d}s + C\bigg(1+\Big(\mathbb{E}\Big[\big|Y_{0}\big|^{\bar{p}}\Big]\Big)^{\beta}\bigg) \Delta t^{p}.
\end{align}
Here, $\hat{p}$ from Lemma \ref{lem:g_and_Gamma_g_difference} and the upper bound $\tilde{p}$ for the admissible moment order is given by
$$
\tilde{p}:=\left(\frac{1}{(2\gamma+3)(\gamma+1)}\wedge   \frac{1}{(\gamma+1)\gamma_{1}}\right)\times \frac{\bar{p}-\Theta}{1+\bar{\zeta}\Theta},
$$
where $\bar{p}, \bar{\zeta}, \Theta, \gamma$ as defined in Lemma \ref{lem:moment-bounds-of-numerical-solution}, while $ \gamma_{1} $ from Assumption \ref{ass:Coefficient-comparison-conditions-of-Gamma1-Gamma4}.
\end{lemma}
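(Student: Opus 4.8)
The plan is to extract from $\Delta f_{s,\tau_n(s)}^{Y,Y}$ the part that is centred conditionally on $\mathcal F_{\tau_n(s)}$, annihilate it against the left-endpoint part of $|e_s^{i,N}|^{p-2}e_s^{i,N}$ by the tower property, and dispatch everything else either by Young's inequality (genuine error contributions) or by a conditioning/It\^o-isometry argument (the delicate cross term). Concretely, I would apply the mean-value identity of Lemma \ref{lem:mean-value-lemma} componentwise with $h=f^{(v)}$, $y=Y_s^{i,N}$, $\bar y=Y_{\tau_n(s)}^{i,N}$, $z^j=Y_s^{j,N}$, $\bar z^j=Y_{\tau_n(s)}^{j,N}$, writing $\Delta f_{s,\tau_n(s)}^{(v),Y,Y}$ as a $\theta$-average of $\partial_y f^{(v)}$ and $\partial_\rho f^{(v)}$ at intermediate arguments times the increments $Y_s^{i,N}-Y_{\tau_n(s)}^{i,N}$ and $Y_s^{k_1,N}-Y_{\tau_n(s)}^{k_1,N}$. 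Substituting \eqref{eq:continuous-version-of-MMS} for those increments and replacing the intermediate arguments of $\partial_y f^{(v)},\partial_\rho f^{(v)}$ by the left endpoint $(Y_{\tau_n(s)}^{i,N},\rho_{\tau_n(s)}^{Y,N})$ (resp. with $Y_{\tau_n(s)}^{k_1,N}$) gives $\Delta f_{s,\tau_n(s)}^{(v),Y,Y}=V(\partial_y f^{(v)},\Gamma_g)+\mathcal V(\partial_\rho f^{(v)},\Gamma_g)+\mathcal R_s^{(v)}$, with $V,\mathcal V$ as in \eqref{defi:V}, \eqref{eq:mathcal_V} (so that $\mathbb E[V+\mathcal V\mid\mathcal F_{\tau_n(s)}]=0$, the integrands being adapted and the left-endpoint prefactors $\mathcal F_{\tau_n(s)}$-measurable), while $\mathcal R_s^{(v)}$ collects the drift contributions $\partial_y f^{(v)\top}\Gamma_1(f)\,\Delta t$, $\tfrac1N\sum_{k_1}\partial_\rho f^{(v)\top}\Gamma_1(f)\,\Delta t$ and the terms carrying a difference $\partial_y f^{(v)}(\cdot_\theta)-\partial_y f^{(v)}(Y_{\tau_n(s)}^{i,N},\rho_{\tau_n(s)}^{Y,N})$ and its $\partial_\rho f^{(v)}$ analogue. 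For the drift contributions I would use the growth bounds \eqref{ineq:growth-condition-of-f}, \eqref{ineq:growth-condition-of-f-y}, \eqref{ineq:growth-condition-of-f-mu} and Lemma \ref{lem:moment-bounds-of-numerical-solution}; for the difference terms, Assumption \ref{ass:Derivative-of-f-and-g-with-respect-to-y-mu} supplies an extra increment factor $|Y_s^{i,N}-Y_{\tau_n(s)}^{i,N}|$ or $\mathbb W_2(\rho_s^{Y,N},\rho_{\tau_n(s)}^{Y,N})$, so that Cauchy--Schwarz with Lemma \ref{lem:The-difference-between-two-numerical-solutions}, \eqref{ineq:W_p-difference}, \eqref{eq:W_2^p} and the moment bounds yields $\mathbb E[|\mathcal R_s^{(v)}|^p]\le C\bigl(1+(\mathbb E[|Y_0|^{\bar p}])^{\beta}\bigr)\Delta t^p$ for $p\in[2,\tilde p\wedge\hat p]$.

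I would then write $|e_s^{i,N}|^{p-2}e_s^{i,N}=|e_{\tau_n(s)}^{i,N}|^{p-2}e_{\tau_n(s)}^{i,N}+D_s^{i,N}$ and treat the three resulting integrals in turn. The $\mathcal R$-part is bounded via $|e_s^{i,N}|^{p-1}|\mathcal R_s|\le\tfrac{p-1}{p}|e_s^{i,N}|^p+\tfrac1p|\mathcal R_s|^p$, contributing $C\int_0^t\sup_i\sup_{r\le s}\mathbb E[|e_r^{i,N}|^p]\,\mathrm ds+C\bigl(1+(\mathbb E[|Y_0|^{\bar p}])^{\beta}\bigr)\Delta t^p$. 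The left-endpoint part $\mathbb E[\int_0^t\langle|e_{\tau_n(s)}^{i,N}|^{p-2}e_{\tau_n(s)}^{i,N},\,V+\mathcal V\rangle\,\mathrm ds]$ vanishes by the tower property, since $|e_{\tau_n(s)}^{i,N}|^{p-2}e_{\tau_n(s)}^{i,N}$ is $\mathcal F_{\tau_n(s)}$-measurable and $\mathbb E[V+\mathcal V\mid\mathcal F_{\tau_n(s)}]=0$.

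The remaining term $\mathbb E[\int_0^t\langle D_s^{i,N},\,V+\mathcal V\rangle\,\mathrm ds]$ is the heart of the argument. Using $\bigl|\,|a|^{p-2}a-|b|^{p-2}b\,\bigr|\le C(|a|^{p-2}+|b|^{p-2})|a-b|$ (and, for $2\le p<3$, the sharper $\bigl|\,|a|^{p-2}a-|b|^{p-2}b\,\bigr|\le C|a-b|^{p-1}$ on the affected pieces) and decomposing $e_s^{i,N}-e_{\tau_n(s)}^{i,N}$ into its drift part and its stochastic-integral part, I would handle the drift part (size $\Delta t$) by Cauchy--Schwarz against the $\Delta t^{q/2}$-bounds for $V,\mathcal V$ from Lemmas \ref{lem:V_estimate}, \ref{lem:mathcal_V_estimate} and the moment bounds, obtaining $O(\Delta t^p)$ after Young. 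For the stochastic-integral part paired with $V+\mathcal V$ I would condition on $\mathcal F_{\tau_n(s)}$ and use It\^o isometry: the resonance integrand frozen at $\tau_n(s)$ contains $g_{j}(X_{\tau_n(s)}^{i,N},\rho_{\tau_n(s)}^{X,N})-\Gamma_2(g_{j}(Y_{\tau_n(s)}^{i,N},\rho_{\tau_n(s)}^{Y,N}))$, which by \eqref{ineq:polynomial-growth-of-g}, Assumption \ref{ass:Coefficient-comparison-conditions-of-Gamma1-Gamma4} and \eqref{ineq:W_p-difference} is bounded by a polynomial factor times $\bigl(|e_{\tau_n(s)}^{i,N}|+\mathbb W_2(\rho_{\tau_n(s)}^{X,N},\rho_{\tau_n(s)}^{Y,N})\bigr)$ plus an $O(\Delta t^{\delta_2})$ term; the error factor $|e_{\tau_n(s)}^{i,N}|$ turns the resulting $O(\Delta t)$-contribution into a Gronwall term, absorbed into $\epsilon\sup_{r\le s}\mathbb E[|e_r^{i,N}|^p]+C_\epsilon\Delta t^p$ by Young, while the $O(\Delta t^{\delta_2})$ term, with $\delta_2\ge1$, again gives $O(\Delta t^p)$ after Young. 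The $r$-variation of the resonance integrand is expanded once more by Lemma \ref{lem:mean-value-lemma}, its leading piece being again $\mathcal F_{\tau_n(s)}$-conditionally centred, so that It\^o isometry produces a clean extra power of $\Delta t$; here the $\bar V,\ V(\cdot,\Gamma_g,\Gamma_2),\ \bar{\mathcal V},\ \mathcal V(\cdot,\Gamma_g,\Gamma_2)$ bounds of Lemmas \ref{lem:V_estimate}, \ref{lem:mathcal_V_estimate} close the bookkeeping and leave a genuinely $O(\Delta t^p)$ remainder. Summing over $v\in\{1,\dots,d\}$ and over $[0,t]$ then gives the assertion.

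The step I expect to be the main obstacle is exactly $\mathbb E[\int_0^t\langle D_s^{i,N},V+\mathcal V\rangle\,\mathrm ds]$: the naive estimate $|\langle D_s^{i,N},V+\mathcal V\rangle|\le C(|e_s^{i,N}|^{p-2}+|e_{\tau_n(s)}^{i,N}|^{p-2})|e_s^{i,N}-e_{\tau_n(s)}^{i,N}|\,|V+\mathcal V|$ only yields $O(\Delta t^{p/2})$, since both the one-step increment of $e^{i,N}$ and the Milstein-martingale terms $V,\mathcal V$ are of size $\Delta t^{1/2}$; recovering the full order $\Delta t^p$ forces one to first annihilate the $\mathcal F_{\tau_n(s)}$-frozen part exactly (conditional centring) and then extract the surviving half power of $\Delta t$ from the It\^o isometry in the resonance term, where it is essential that the resonance integrand carries the factor $|e_{\tau_n(s)}^{i,N}|$ so that the leftover $O(\Delta t)$ feeds the Gronwall loop instead of standing alone. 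Propagating the polynomial-moment weights through all these splittings without exceeding the thresholds $\hat p,\tilde p$ of Lemmas \ref{lem:g_and_Gamma_g_difference} and this one is the accompanying technical burden.
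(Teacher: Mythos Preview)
Your overall architecture matches the paper's: expand $\Delta f^{(v),Y,Y}_{s,\tau_n(s)}$ componentwise via Lemma~\ref{lem:mean-value-lemma}, freeze $\partial_y f^{(v)}$ and $\partial_\rho f^{(v)}$ at the left endpoint to isolate the centred pieces $V(\partial_y f^{(v)},\Gamma_g)+\mathcal{V}(\partial_\rho f^{(v)},\Gamma_g)$, and dispatch the remainder $\mathcal{R}_s^{(v)}$ via Assumption~\ref{ass:Derivative-of-f-and-g-with-respect-to-y-mu}, Lemma~\ref{lem:The-difference-between-two-numerical-solutions} and the moment bounds. Your observation that $\mathbb{E}\bigl[\langle |e_{\tau_n(s)}^{i,N}|^{p-2}e_{\tau_n(s)}^{i,N},\,V+\mathcal{V}\rangle\bigr]=0$ by conditional centring is in fact slightly sharper than the paper's treatment of this piece (its $K_{2,2,1}$), which cancels only the $\Gamma_2$-part and then bounds the surviving $V(\cdot,\Gamma_g,\Gamma_2)$ through Lemma~\ref{lem:V_estimate}.

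The genuine gap is in the cross term $\mathbb{E}\bigl[\int_0^t\langle D_s^{i,N},\,V+\mathcal{V}\rangle\,\mathrm{d}s\bigr]$. Once you invoke the pointwise bound $\bigl|\,|a|^{p-2}a-|b|^{p-2}b\,\bigr|\le C(|a|^{p-2}+|b|^{p-2})|a-b|$, you are left with the product of moduli $(|e_s|^{p-2}+|e_{\tau_n(s)}|^{p-2})\,\bigl|e_s^{i,N}-e_{\tau_n(s)}^{i,N}\bigr|\,|V+\mathcal{V}|$, and the sign structure needed for the It\^o product rule is gone: you cannot ``pair'' $\bigl|\int_{\tau_n(s)}^s\Delta g^{X,Y;\Gamma_g}\,\mathrm{d}W\bigr|$ with $|V|$ and extract a covariation. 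With only moment bounds available, the stochastic part of $e_s-e_{\tau_n(s)}$ is $O(\Delta t^{1/2})$ in $L^p$ and $|V|$ is likewise $O(\Delta t^{1/2})$ by Lemma~\ref{lem:V_estimate}; no Young splitting converts the resulting $O(\Delta t)$ per interval into an overall $O(\Delta t^p)$ while keeping the $|e|^p$ coefficient integrable in $s$ --- exactly the obstruction you flag yourself. The paper's cure is structural: it applies It\^o's formula \emph{exactly} to $s\mapsto|e_s^{i,N}|^{p-2}e_s^{(v),i,N}$ on $[\tau_n(s),s]$ (see \eqref{ineq:ito_derivation}), so that $D_s^{(v),i,N}$ is an honest sum of Lebesgue and It\^o integrals. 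The It\^o-integral summands, still signed and with integrands $|e_r|^{p-2}\Delta g^{(v),X,Y;\Gamma_g}_{j,r,\tau_n(r)}$ and $e_r^{(v)}|e_r|^{p-4}e_r^{\top}\Delta g^{X,Y;\Gamma_g}_{j,r,\tau_n(r)}$, multiply $V(\partial_y f^{(v)},\Gamma_g)$ and after the It\^o product rule collapse to a Lebesgue integral of $|e_r|^{p-2}|\Delta g^{X,Y;\Gamma_g}_{j,r,\tau_n(r)}|\,|\bar V(\partial_y f^{(v)},\Gamma_g)|$; the triangle splitting $\Delta g^{X,Y;\Gamma_g}=\Delta g^{X,Y}+\Delta g^{Y,Y;\Gamma_g}$ with \eqref{ineq:polynomial-growth-of-g} and Lemma~\ref{lem:g_and_Gamma_g_difference} then closes the estimate. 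Replacing your pointwise Lipschitz bound for $\phi(x)=|x|^{p-2}x$ by this exact It\^o expansion is the missing ingredient.
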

\begin{proof}
One can observe that
\begin{align} \label{Inter-proce:K}
    &\int_0^t \left|e^{i,N}_{s}\right|^{p-2} \left\langle e^{i,N}_{s}, \Delta f_{s,\tau_{n}(s)}^{Y,Y}\right\rangle \mathrm{d}s 
    =\sum_{v=1}^{d}\int_0^t \left|e^{i,N}_{s}\right|^{p-2}  e^{(v),i,N}_{s} \Delta f_{s,\tau_{n}(s)}^{(v),Y,Y} \mathrm{d}s \notag \\
    =&~\sum_{v=1}^{d}\int_0^t \left|e^{i,N}_{s}\right|^{p-2}  e^{(v),i,N}_{s} \bigg\{f^{(v)}\left(Y_{s}^{i,N},\rho_{s}^{Y,N}\right) -f^{(v)}\left(Y_{\tau_{n}(s)}^{i,N},\rho_{\tau_{n}(s)}^{Y,N} \right)  \notag \\
    &~\qquad \qquad \qquad \qquad \qquad \quad  -\partial_{y}f^{(v)\top}\left(Y_{\tau_{n}(s)}^{i,N},\rho_{\tau_{n}(s)}^{Y,N} \right) \left(Y_{s}^{i,N}-Y_{\tau_{n}(s)}^{i,N}\right)\notag \\
    &~\qquad \qquad \qquad \qquad \qquad \quad  -\frac{1}{N}\sum_{k_{1}=1}^{N}\partial_{\rho}f^{(v)\top}\left(Y_{\tau_{n}(s)}^{i,N},\rho_{\tau_{n}(s)}^{Y,N},Y_{\tau_{n}(s)}^{k_{1},N} \right)\left(Y_{s}^{k_{1},N}-Y_{\tau_{n}(s)}^{k_{1},N}\right) \bigg\} \mathrm{d}s \notag \\
    &~+\sum_{v=1}^{d}\int_0^t \left|e^{i,N}_{s}\right|^{p-2}  e^{(v),i,N}_{s} \left(\partial_{y}f^{(v)\top}\left(Y_{\tau_{n}(s)}^{i,N},\rho_{\tau_{n}(s)}^{Y,N} \right) \left(Y_{s}^{i,N}-Y_{\tau_{n}(s)}^{i,N}\right)\right) \mathrm{d}s \notag \\
    &~+\sum_{v=1}^{d}\int_0^t \left|e^{i,N}_{s}\right|^{p-2}  e^{(v),i,N}_{s} \bigg(\frac{1}{N}\sum_{k_{1}=1}^{N}\partial_{\rho}f^{(v)\top}\left(Y_{\tau_{n}(s)}^{i,N},\rho_{\tau_{n}(s)}^{Y,N},Y_{\tau_{n}(s)}^{k_{1},N} \right)\left(Y_{s}^{k_{1},N}-Y_{\tau_{n}(s)}^{k_{1},N}\right)\bigg)\mathrm{d}s  \notag  \\ 
    =: &~K_{1} + K_{2} + K_{3} .  
\end{align}

Based on Lemma \ref{lem:mean-value-lemma}, the Cauchy-Schwarz inequality, Young's inequality, Assumption \ref{ass:Derivative-of-f-and-g-with-respect-to-y-mu}, $\rm H\ddot{o}lder$'s inequality, the $\mathbb{W}_2$ estimate in \eqref{ineq:W_p-difference}, and Lemmas \ref{lem:The-difference-between-two-numerical-solutions}, \ref{lem:moment-bounds-of-numerical-solution} and \ref{lem:moment_bound_of_continuous_form}, for any $p\in[2,\tilde{p}]$, we get
\begin{align} \label{Inter-proce:K1}
    &\mathbb{E}\left[K_{1}\right] \notag \\
    \le&~\mathbb{E}\bigg[\sum_{v=1}^{d}\int_0^t \left|e^{i,N}_{s}\right|^{p-1} \!\bigg\{\!\int_{0}^{1}\!\Big|\partial_{y}f^{(v)}\big(Y_{\tau_{n}(s)}^{i,N}\!+\!\theta\big(Y_{s}^{i,N}\!-\!Y_{\tau_{n}(s)}^{i,N}\big),\rho_{s}^{Y,N}\big) \!-\!\partial_{y}f^{(v)}\!\big(Y_{\tau_{n}(s)}^{i,N},\!\rho_{\tau_{n}(s)}^{Y,N} \big) \Big|\,\Big|Y_{s}^{i,N}\!-\!Y_{\tau_{n}(s)}^{i,N}\Big|\,\mathrm{d}\theta\notag \\
    &~\qquad \qquad \qquad \qquad \qquad + \frac{1}{N}\sum_{k_{1}=1}^{N}\int_{0}^{1}\Big|\partial_{\rho}f^{(v)}\Big(Y_{\tau_{n}(s)}^{i,N},\rho_{\tau_{n}(s)+\theta (s-\tau_{n}(s))}^{Y,N},Y_{\tau_{n}(s)}^{k_{1},N}+\theta\big(Y_{s}^{k_{1},N}-Y_{\tau_{n}(s)}^{k_{1},N}\big) \Big) \notag \\
    &~\qquad \qquad \qquad \qquad \qquad \qquad \qquad \qquad \quad -\partial_{\rho}f^{(v)}\big(Y_{\tau_{n}(s)}^{i,N},\rho_{\tau_{n}(s)}^{Y,N},Y_{\tau_{n}(s)}^{k_{1},N} \big)\Big|\,\Big|Y_{s}^{k_{1},N}-Y_{\tau_{n}(s)}^{k_{1},N}\Big|\,\mathrm{d}\theta\, \bigg\} \, \mathrm{d}s\bigg] \notag \\
    \le &~C\mathbb{E}\bigg[\int_0^t \left|e^{i,N}_{s}\right|^{p} \,\mathrm{d}s \bigg]+ C\bigg(1+\Big(\mathbb{E}\Big[\big|Y_{0}\big|^{\bar{p}}\Big]\Big)^{\beta}\bigg) \Delta t^{p}.
\end{align}

Combining the Milstein-type schemes in \eqref{eq:continuous-version-of-MMS} with the definition of $V(\partial_{y}f^{(v)},\Gamma_{g})$ in \eqref{defi:V}, we can derive that 
\begin{align} \label{Inter-proce:K2}
    K_{2}=&~\sum_{v=1}^{d}\int_0^t \left|e^{i,N}_{s}\right|^{p-2}  e^{(v),i,N}_{s} \bigg(\partial_{y}f^{(v)\top}\big(Y_{\tau_{n}(s)}^{i,N},\rho_{\tau_{n}(s)}^{Y,N} \big) \int_{\tau_{n}(s)}^{s}\Gamma_{1}\Big(f\big(Y_{\tau_{n}(r)}^{i,N},\rho_{\tau_{n}(r)}^{Y,N}\big),\Delta t  \Big)\mathrm{d}r\bigg) \mathrm{d}s \notag \\
    &~+\sum_{v=1}^{d}\int_0^t \left|e^{i,N}_{s}\right|^{p-2}  e^{(v),i,N}_{s}~ V(\partial_{y}f^{(v)},\Gamma_{g}) \,\mathrm{d}s \notag \\
    =:&~K_{2,1}+K_{2,2}.
\end{align}
Applying the Cauchy-Schwarz, Young's, and $\rm H\ddot{o}lder$'s inequalities, along with Assumption \ref{ass:Gamma-control-conditions}, \eqref{ineq:growth-condition-of-f-y} and \eqref{ineq:growth-condition-of-f}, the $\mathbb{W}_{2}$ estimate in \eqref{eq:W_2^p} and Lemma \ref{lem:moment-bounds-of-numerical-solution}, it follows that for all $p\in[2,\tilde{p}]$, 
\begin{align} \label{Inter-proce:K21}
    \mathbb{E}\left[K_{2,1}\right]
    \le &~\mathbb{E}\bigg[\sum_{v=1}^{d}\int_0^t \left|e^{i,N}_{s}\right|^{p-1}  \left|\partial_{y}f^{(v)}\left(Y_{\tau_{n}(s)}^{i,N},\rho_{\tau_{n}(s)}^{Y,N} \right)\right| \Big|\int_{\tau_{n}(s)}^{s}\Gamma_{1}\left(f\left(Y_{\tau_{n}(r)}^{i,N},\rho_{\tau_{n}(r)}^{Y,N}\right),\Delta t  \right)\mathrm{d} r\Big|\mathrm{d} s \bigg] \notag \\
    \le &~C\mathbb{E}\left[\int_0^t \left|e^{i,N}_{s}\right|^{p} \mathrm{d}s\right] + C\bigg(1+\Big(\mathbb{E}\Big[\big|Y_{0}\big|^{\bar{p}}\Big]\Big)^{\beta}\bigg) \Delta t^{p}.
\end{align}
We now proceed to estimate $K_{2,2}$, noting that 
\begin{equation} \label{eq:error-expression}
e^{i,N}_{t} = \int_{0}^{t} \Delta f^{X,Y;\,\Gamma_{1}}_{s,\tau_{n}(s)}\,\mathrm{d}s + \sum_{j=1}^{m}\int_{0}^{t} \Delta g_{j,s,\tau_{n}(s)}^{X,Y;\,\Gamma_{g}}\,\mathrm{d}W_{s}^{i,j}.
\end{equation}
Subsequently, applying $\rm It\hat{o}$'s formula to $|e_{s}^{i,N}|^{p-2}e_{s}^{(v),i,N}$ yields
\begin{align}  \label{ineq:ito_derivation}
    &~\left|e^{i,N}_{s}\right|^{p-2}  e^{(v),i,N}_{s}   \notag \\
    =&~ \left|e^{i,N}_{\tau_{n}(s)}\right|^{p-2}  e^{(v),i,N}_{\tau_{n}(s)} + \int_{\tau_{n}(s)}^{s}\left|e^{i,N}_{r}\right|^{p-2}\Delta f_{r,\tau_{n}(r)}^{(v),X,Y;\,\Gamma_{1}}\,\mathrm{d}r +\sum_{j=1}^{m}\int_{\tau_{n}(s)}^{s}\left|e^{i,N}_{r}\right|^{p-2}\Delta g^{(v),X,Y;\,\Gamma_{g}}_{j,r,\tau_{n}(r)}\,\mathrm{d} W^{i,j}_{r}   \notag \\
    &~+(p-2)\int_{\tau_{n}(s)}^{s}e^{(v),i,N}_{r}\left|e^{i,N}_{r}\right|^{p-4}e^{i,N \top}_{r}\Delta f_{r,\tau_{n}(r)}^{X,Y;\,\Gamma_{1}} \,\mathrm{d}r  \notag  \\
    &~+(p-2)\sum_{j=1}^{m}\int_{\tau_{n}(s)}^{s}e^{(v),i,N}_{r}\left|e^{i,N}_{r}\right|^{p-4}e^{i,N \top}_{r}  \Delta g_{j,r,\tau_{n}(r)}^{X,Y;\,\Gamma_{g}}
     \,\mathrm{d} W^{i,j}_{r}   \notag \\
    &~+\frac{(p-2)(p-4)}{2}\sum_{j=1}^{m}\int_{\tau_{n}(s)}^{s}e^{(v),i,N}_{r}\left|e^{i,N}_{r}\right|^{p-6}  
    \left|e^{i,N \top}_{r}  \Delta g_{j,r,\tau_{n}(r)}^{X,Y;\,\Gamma_{g}} \right|^{2} \,\mathrm{d}r  \notag \\
    &~+\frac{(p-2)}{2}\sum_{j=1}^{m}\int_{\tau_{n}(s)}^{s}e^{(v),i,N}_{r}\left|e^{i,N}_{r}\right|^{p-4}  
    \left|\Delta g_{j,r,\tau_{n}(r)}^{X,Y;\,\Gamma_{g}} \right|^{2} \,\mathrm{d}r  \notag \\
    &~+(p-2)\sum_{j=1}^{m}\sum_{u=1}^{d}\int_{\tau_{n}(s)}^{s}\left|e^{i,N}_{r}\right|^{p-4}  
    \Delta g_{j,r,\tau_{n}(r)}^{(v),X,Y;\,\Gamma_{g}}~ e^{(u),i,N}_{r}~ \Delta g_{j,r,\tau_{n}(r)}^{(u),X,Y;\,\Gamma_{g}}  \,\mathrm{d}r.
\end{align}
Therefore, substituting \eqref{ineq:ito_derivation} into $K_{2,2}$, and applying the Cauchy-Schwarz inequality, we derive that
\begin{align} \label{Inter-proce:K22}
\mathbb{E}\left[K_{2,2}\right] 
\le&~\mathbb{E}\bigg[\sum_{v=1}^{d}\int_0^t \big|e^{i,N}_{\tau_{n}(s)}\big|^{p-2}  e^{(v),i,N}_{\tau_{n}(s)}~V(\partial_{y}f^{(v)},\Gamma_{g}) \,\mathrm{d}s \bigg] \notag \\
&~+C\mathbb{E}\bigg[\sum_{v=1}^{d}\int_0^t \Big(\int_{\tau_{n}(s)}^{s}\big|e^{i,N}_{r}\big|^{p-2}\big|\Delta f_{r,\tau_{n}(r)}^{X,Y;\,\Gamma_{1}}\big|\,\mathrm{d}r \Big)~\big|V(\partial_{y}f^{(v)},\Gamma_{g})\big| \,\mathrm{d}s \bigg] \notag \\
&~+\mathbb{E}\bigg[\sum_{v=1}^{d}\int_0^t \Big(\sum_{j=1}^{m}\int_{\tau_{n}(s)}^{s}\left|e^{i,N}_{r}\right|^{p-2}\Delta g^{(v),X,Y;\,\Gamma_{g}}_{j,r,\tau_{n}(r)}\,\mathrm{d} W^{i,j}_{r} \Big)~V(\partial_{y}f^{(v)},\Gamma_{g}) \,\mathrm{d} s\bigg] \notag \\
&~+\mathbb{E}\bigg[\sum_{v=1}^{d}\int_0^t \Big((p-2)\sum_{j=1}^{m}\int_{\tau_{n}(s)}^{s}e^{(v),i,N}_{r}\left|e^{i,N}_{r}\right|^{p-4}e^{i,N \top}_{r}  \Delta g_{j,r,\tau_{n}(r)}^{X,Y;\,\Gamma_{g}}
     \,\mathrm{d} W^{i,j}_{r} \Big)~V(\partial_{y}f^{(v)},\Gamma_{g}) \,\mathrm{d} s\bigg] \notag \\
&~+C \mathbb{E}\bigg[\sum_{v=1}^{d}\int_0^t \Big(\sum_{j=1}^{m}\int_{\tau_{n}(s)}^{s}\left|e^{i,N}_{r}\right|^{p-3}  
    \big|\Delta g_{j,r,\tau_{n}(r)}^{X,Y;\,\Gamma_{g}} \big|^{2} \,\mathrm{d}r\Big)~\big|V(\partial_{y}f^{(v)},\Gamma_{g})\big| \,\mathrm{d}s\bigg] \notag \\
=:&~\mathbb{E}\left[K_{2,2,1}\right]+\mathbb{E}\left[K_{2,2,2}\right]+\mathbb{E}\left[K_{2,2,3}\right]+\mathbb{E}\left[K_{2,2,4}\right]+\mathbb{E}\left[K_{2,2,5}\right].
\end{align}
In what follows, we evaluate each of the above expressions separately. Using \eqref{defi:V}, together with martingale properties, Young's inequality, the $\mathbb{W}_{2}$ estimate in \eqref{eq:W_2^p}, and Lemmas \ref{lem:V_estimate} and \ref{lem:moment-bounds-of-numerical-solution}, one can show that for all $p\in[2,\tilde{p}]$,  
\begin{align} \label{Inter-proce:K221}
    \mathbb{E}\left[K_{2,2,1}\right] 
    =&~\mathbb{E}\bigg[\sum_{v=1}^{d}\int_0^t \big|e^{i,N}_{\tau_{n}(s)}\big|^{p-2}  e^{(v),i,N}_{\tau_{n}(s)}~V\Big(\partial_{y}f^{(v)},\Gamma_{g},\Gamma_{2}\Big)\,\mathrm{d}s\bigg] \notag \\ 
    \le &~C\mathbb{E}\bigg[\int_{0}^{t}\big|e^{i,N}_{\tau_{n}(s)}\big|^{p}\,\mathrm{d}s\bigg] + C \bigg(1+\sup_{i\in\mathcal{I}_{N}}\mathbb{E}\Big[\Big|Y_{\tau_{n}(s)}^{i,N}\Big|^{2(\gamma+1)p}\Big]\bigg) \Delta t^{p} \notag \\
    \le&~C\mathbb{E}\bigg[\int_0^t \big|e^{i,N}_{\tau_{n}(s)}\big|^{p} \mathrm{d}s\bigg] + C\bigg(1+\Big(\mathbb{E}\Big[\big|Y_{0}\big|^{\bar{p}}\Big]\Big)^{\beta}\bigg) \Delta t^{p}.
\end{align}
By applying the triangle inequality, together with Assumptions \ref{ass:polynomial-growth-of-f} and \ref{ass:Gamma-control-conditions},  one can derive that  
\begin{align*}
    \mathbb{E}\left[K_{2,2,2}\right] 
    \le &~C\mathbb{E}\bigg[\sum_{v=1}^{d} \int_0^t \Big(\int_{\tau_{n}(s)}^{s}\big|e^{i,N}_{r}\big|^{p-2}\big|\Delta f_{r}^{X,Y}\big|\mathrm{d}r\Big)   ~\Big|V(\partial_{y}f^{(v)},\Gamma_{g})\Big| \,\mathrm{d}s \bigg]\notag  \\
    &~+C\mathbb{E}\bigg[\sum_{v=1}^{d}\int_0^t \Big(\int_{\tau_{n}(s)}^{s}\big|e^{i,N}_{r}\big|^{p-2}\big|\Delta f_{r,\tau_{n}(r)}^{Y,Y} \big|\mathrm{d}r\Big) ~\Big|V(\partial_{y}f^{(v)},\Gamma_{g})\Big| \,\mathrm{d}s \bigg
    ]\notag  \\
    &~+C\mathbb{E}\bigg[\sum_{v=1}^{d}\int_0^t \Big
    (\int_{\tau_{n}(s)}^{s}\big|e^{i,N}_{r}\big|^{p-2}\big|\Delta f_{\tau_{n}(r)}^{Y,Y;\,\Gamma_{1}}\big|\mathrm{d}r\Big)  ~\Big|V(\partial_{y}f^{(v)},\Gamma_{g})\Big| \,\mathrm{d}s \bigg]\notag  \\
    \le &~C\mathbb{E}\bigg[\sum_{v=1}^{d} \int_0^t \Big(\int_{\tau_{n}(s)}^{s}\big|e^{i,N}_{r}\big|^{p-1}\Delta t^{\frac{1-p}{p}} \Delta t^{\frac{p-1}{p}} \Big(1+\big|X_r^{i, N}\big|^{\gamma}+\big|Y_r^{i,N}\big|^{\gamma}\Big) \mathrm{d}r \Big)  ~\Big|V(\partial_{y}f^{(v)},\Gamma_{g})\Big| \,\mathrm{d}s \bigg]\notag  \\
    &~+C\mathbb{E}\bigg[ \sum_{v=1}^{d}\int_0^t \int_{\tau_{n}(s)}^{s}\big|e^{i,N}_{r}\big|^{p-2}\,\mathbb{W}_{2}\big(\rho_r^{X, N},\rho_r^{Y,N}\big) \Delta t^{\frac{1-p}{p}} \Delta t^{\frac{p-1}{p}}~\mathrm{d}r~\Big|V(\partial_{y}f^{(v)},\Gamma_{g})\Big| \,\mathrm{d}s \bigg]\notag  \\
    &~+C\mathbb{E}\bigg[\sum_{v=1}^{d} \int_0^t \Big(\int_{\tau_{n}(s)}^{s}\big|e^{i,N}_{r}\big|^{p-2}\Delta t^{\frac{2-p}{p}}\Delta t^{\frac{p-2}{p}} \Big(1+\big|Y_r^{i,N}\big|^{\gamma}+\big|Y_{\tau_{n}(r)}^{i,N}\big|^{\gamma}\Big)\big|Y_r^{i,N}-Y_{\tau_{n}(r)}^{i,N}\big|\mathrm{d}r \Big)  \notag \\ 
    &~\qquad   \qquad \times ~\Big|V(\partial_{y}f^{(v)},\Gamma_{g})\Big| \,\mathrm{d}s \bigg]\notag  \\
    &~+C\mathbb{E}\bigg[\sum_{v=1}^{d} \int_0^t \Big(\int_{\tau_{n}(s)}^{s}\big|e^{i,N}_{r}\big|^{p-2}\Delta t^{\frac{2-p}{p}}\Delta t^{\frac{p-2}{p}}\,\mathbb{W}_{2}\big(\rho_r^{Y,N},\rho_{\tau_{n}(r)}^{Y,N}\big)\mathrm{d}r\Big) ~\left|V(\partial_{y}f^{(v)},\Gamma_{g})\right| \mathrm{d}s \bigg]\notag  \\
    &~+C\mathbb{E}\bigg[ \sum_{v=1}^{d}\int_0^t \Big(\int_{\tau_{n}(s)}^{s}\big|e^{i,N}_{r}\big|^{p-2}\Delta t^{\frac{2-p}{p}}\Delta t^{\frac{p-2}{p}}\big|f\big(Y_{\tau_{n}(r)}^{i,N},\rho_{\tau_{n}(r)}^{Y,N}\big)\big|^{\gamma_{1}}\Delta t^{\delta_{1}}\mathrm{d}r\Big) ~\Big|V(\partial_{y}f^{(v)},\Gamma_{g})\Big| \,\mathrm{d}s \bigg].
\end{align*}
Combining Young's and $\rm H\ddot{o}lder$'s inequalities with the $\mathbb{W}_{2}$ estimate in \eqref{ineq:W_p-difference}, as well as  \eqref{remak:well_posedness_interact_particle}, Lemmas \ref{lem:The-difference-between-two-numerical-solutions}, \ref{lem:V_estimate}, \ref{lem:moment-bounds-of-numerical-solution} and \ref{lem:moment_bound_of_continuous_form}, and \eqref{ineq:growth-condition-of-f}, it follows that for all $p\in[2,\tilde{p}]$,
\begin{align} \label{Inter-proce:K222}
    &\mathbb{E}\left[K_{2,2,2}\right] \notag \\
   \le &~C\mathbb{E}\bigg[\int_0^t\int_{\tau_{n}(s)}^{s}\big|e^{i,N}_{r}\big|^{p} \Delta t^{-1}\,\mathrm{d}r\,\mathrm{d}s\bigg] \notag \\
    &~+ C\mathbb{E}\bigg[\sum_{v=1}^{d}\int_0^t \Big(\int_{\tau_{n}(s)}^{s} \Big(1+\big|X_r^{i, N}\big|^{\gamma}+\big|Y_r^{i,N}\big|^{\gamma}\Big)^{p}\,\mathrm{d}r \Big) \Big|V(\partial_{y}f^{(v)},\Gamma_{g}) \Big|^{p}\, \mathrm{d}s \bigg]\Delta t^{p-1}\notag  \\
    &~+ C\mathbb{E}\bigg[\int_0^t \int_{\tau_{n}(s)}^{s} \mathbb{W}_{2}^{p}\big(\rho_{r}^{X,N},\rho_{r}^{Y,N}\big)\Delta t^{-1}\,\mathrm{d}r \,\mathrm{d}s \bigg] + C\mathbb{E}\bigg[\sum_{v=1}^{d}\int_0^t  \Big|V(\partial_{y}f^{(v)},\Gamma_{g}) \Big|^{p} \mathrm{d}s \bigg]\Delta t^{p}\notag  \\
    &~+C\mathbb{E}\bigg[\sum_{v=1}^{d} \int_0^t \Big(\int_{\tau_{n}(s)}^{s}\Big(1+\big|Y_r^{i,N}\big|^{\gamma}+\big|Y_{\tau_{n}(r)}^{i,N}\big|^{\gamma}\Big)^{\frac{p}{2}}\big|Y_r^{i,N}-Y_{\tau_{n}(r)}^{i,N}\big|^{\frac{p}{2}}\mathrm{d}r \Big)  \Big|V(\partial_{y}f^{(v)},\Gamma_{g}) \Big|^{\frac{p}{2}} \,\mathrm{d}s \bigg] \Delta t^{\frac{p-2}{2}} \notag \\
    &~+C\mathbb{E}\bigg[\sum_{v=1}^{d} \int_0^t \Big(\int_{\tau_{n}(s)}^{s}\mathbb{W}_{2}^{\frac{p}{2}}\big(\rho_r^{Y,N},\rho_{\tau_{n}(r)}^{Y,N}\big)\mathrm{d}r\Big)\Big|V(\partial_{y}f^{(v)},\Gamma_{g}) \Big|^{\frac{p}{2}} \,\mathrm{d}s \bigg]\Delta t^{\frac{p-2}{2}}\notag  \\
    &~+C\mathbb{E}\bigg[ \sum_{v=1}^{d}\int_0^t \Big(\int_{\tau_{n}(s)}^{s}\big|f\big(Y_{\tau_{n}(r)}^{i,N},\rho_{\tau_{n}(r)}^{Y,N}\big)\big|^{\frac{\gamma_{1}p}{2}}\Delta t^{\frac{\delta_{1}p}{2}}\mathrm{d}r\Big)   \Big|V(\partial_{y}f^{(v)},\Gamma_{g}) \Big|^{\frac{p}{2}} \, \mathrm{d}s \bigg] \Delta t^{\frac{p-2}{2}} \notag \\
    \le &~C  \bigg[ \int_0^t \sup_{i\in \mathcal{I}_{N}}\sup_{r\in[0,s]}\mathbb{E}\Big[\big|e^{i,N}_{r}\big|^{p}\Big] \,\mathrm{d}s\bigg] + C\bigg(1+\Big(\mathbb{E}\Big[\big|Y_{0}\big|^{\bar{p}}\Big]\Big)^{\beta}\bigg) \Delta t^{p}.
\end{align}
Based on the definitions of $V(\partial_{y}f^{(v)},\Gamma_{g})$ and $\bar{V}(\partial_{y}f^{(v)},\Gamma_{g})$ in \eqref{defi:V}, together with the $\rm It\hat{o}$ product formula, martingale properties, and the Cauchy-Schwarz inequality, it can be shown that
\begin{align}
\label{proof-eq:middle-proc-K223}
  &\mathbb{E}\left[K_{2,2,3}\right]\\  
  = &~\mathbb{E}\bigg[\sum_{v=1}^{d}\int_{0}^{t}\Big(\sum_{j=1}^{m}\int_{\tau_{n}(s)}^{s}\big|e^{i,N}_{r}\big|^{p-2}\,\Delta g^{(v),X,Y;\,\Gamma_{g}}_{j,r,\tau_{n}(r)}\, \partial_{y}f^{(v)\top}\big(Y_{\tau_{n}(r)}^{i,N},\rho_{\tau_{n}(r)}^{Y,N} \big) \Gamma_{g} \big(\hat{g}_{j}\big(r,Y_{\tau_{n}(r)}^{i,N},\rho_{\tau_{n}(r)}^{Y,N}\big),\Delta t \big) \;d r\Big)\;\mathrm{d}s\bigg] \notag \\
  \le&~\mathbb{E}\bigg[\sum_{v=1}^{d}\int_{0}^{t}\Big(\sum_{j=1}^{m}\int_{\tau_{n}(s)}^{s}\big|e^{i,N}_{r}\big|^{p-2}\big|\Delta g_{j,r,\tau_{n}(r)}^{X,Y;\,\Gamma_{g}} \big|    \big|\bar{V}\big(\partial_{y}f^{(v)},\Gamma_{g}\big) \big|\;d r\Big)\;\mathrm{d}s\bigg].
\end{align}
Therefore, beginning with the triangle inequality and the polynomial growth of g in \eqref{ineq:polynomial-growth-of-g}, an argument analogous to \text{$\mathbb{E}\left[K_{2,2,2}\right]$} is developed: applying Young's inequality, $\rm H\ddot{o}lder$'s inequality, \eqref{remak:well_posedness_interact_particle}, $\mathbb{W}_{2}$ estimate in  \eqref{ineq:W_p-difference}, as well as Lemmas \ref{lem:g_and_Gamma_g_difference}, \ref{lem:V_estimate}, \ref{lem:moment-bounds-of-numerical-solution} and \ref{lem:moment_bound_of_continuous_form}, we conclude that for all $p\in[2,\hat{p}\wedge\tilde{p}]$
\begin{align} \label{Inter-proce:K223}
  \mathbb{E}\left[K_{2,2,3}\right]  
  \le&~\mathbb{E}\bigg[\sum_{v=1}^{d}\int_{0}^{t}\Big(\sum_{j=1}^{m}\int_{\tau_{n}(s)}^{s}\big|e^{i,N}_{r}\big|^{p-2}\big|\Delta g_{j,r}^{X,Y} \big|\,\big|\bar{V}\big(\partial_{y}f^{(v)},\Gamma_{g}\big) \big|\;d r\Big)\;\mathrm{d}s\bigg] \notag \\
  &~+\mathbb{E}\bigg[\sum_{v=1}^{d}\int_{0}^{t}\Big(\sum_{j=1}^{m}\int_{\tau_{n}(s)}^{s}\big|e^{i,N}_{r}\big|^{p-2}\big|\Delta g_{j,r,\tau_{n}(r)}^{Y,Y;\,\Gamma_{g}} \big| \,   \big|\bar{V}\big(\partial_{y}f^{(v)},\Gamma_{g}\big) \big|\;d r\Big)\;\mathrm{d}s\bigg]\notag \\
\le&~C\bigg[\int_0^t \sup_{r\in[0,s]} \sup_{i\in\mathcal{I}_{N}}\mathbb{E}\Big[\big|e^{i,N}_{r}\big|^{p}\big] \,\mathrm{d}s\bigg] + C\bigg(1+\Big(\mathbb{E}\Big[\big|Y_{0}\big|^{\bar{p}}\Big]\Big)^{\beta}\bigg) \Delta t^{p}.
\end{align}
For the term $\mathbb{E}\left[K_{2,2,4}\right]$, similar techniques used in \eqref{proof-eq:middle-proc-K223} help us to show
\begin{align*}
   \mathbb{E}\left[K_{2,2,4}\right]
    \le&~C\mathbb{E}\bigg[\sum_{v=1}^{d}\sum_{j=1}^{m}\int_0^t \int_{\tau_{n}(s)}^{s}e^{(v),i,N}_{r}\big|e^{i,N}_{r}\big|^{p-4}e^{i,N \top}_{r} 
    \Delta g_{j,r,\tau_{n}(r)}^{X,Y;\,\Gamma_{g}} \bar{V}\big(\partial_{y}f^{(v)},\Gamma_{g}\big) \,\mathrm{d}r \,\mathrm{d}s \bigg]\notag \\
    \le&~C\mathbb{E}\bigg[\sum_{v=1}^{d}\sum_{j=1}^{m}\int_{0}^{t}\int_{\tau_{n}(s)}^{s}\big|e^{i,N}_{r}\big|^{p-2}\big|\Delta g_{j,r,\tau_{n}(r)}^{X,Y;\,\Gamma_{g}} \big|\,    \big|\bar{V}\big(\partial_{y}f^{(v)},\Gamma_{g}\big) \big|\;d r\;\mathrm{d}s\bigg].
\end{align*}
By following an identical derivation process as in \eqref{Inter-proce:K223}, we obtain
\begin{align} \label{Inter-proce:K224}
    \mathbb{E}\left[K_{2,2,4}\right] \le C\bigg[\int_0^t \sup_{r\in[0,s]} \sup_{i\in\mathcal{I}_{N}}\mathbb{E}\Big[\big|e^{i,N}_{r}\big|^{p}\big] \,\mathrm{d}s\bigg] + C\bigg(1+\Big(\mathbb{E}\Big[\big|Y_{0}\big|^{\bar{p}}\Big]\Big)^{\beta}\bigg) \Delta t^{p}.
\end{align}
Based on the triangle inequality, the polynomial growth condition of $g$ in \eqref{ineq:polynomial-growth-of-g}, following an approach analogous to $\mathbb{E}\left[K_{2,2,2}\right]$, it follows that for all $p\in[2,\tilde{p}\wedge\hat{p}]$, 
\begin{align} \label{Inter-proce:K225}
   \mathbb{E}\left[K_{2,2,5}\right]
    \le&~C\mathbb{E}\bigg[\sum_{v=1}^{d} \sum_{j=1}^{m} \int_0^t \int_{\tau_{n}(s)}^{s}\big|e^{i,N}_{r}\big|^{p-3}
    \big|\Delta g_{j,r}^{X,Y}\big|^{2} \,\mathrm{d}r \,\big|V(\partial_{y}f^{(v)},\Gamma_{g})\big| \,\mathrm{d}s \bigg]\notag  \\
    &~+C\mathbb{E}\bigg[\sum_{v=1}^{d} \sum_{j=1}^{m} \int_0^t \int_{\tau_{n}(s)}^{s} \big|e^{i,N}_{r}\big|^{p-3}
    \big|\Delta g_{j,r,\tau_{n}(r)}^{Y,Y;\,\Gamma_{g}}\big|^{2} \,\mathrm{d}r\,\big|V(\partial_{y}f^{(v)},\Gamma_{g})\big| \,\mathrm{d}s \bigg]\notag \\ 
    \le&~C\int_0^t\sup_{r\in[0,s]}\sup_{i\in\mathcal{I}_{N}}\mathbb{E}\left[\left|e^{i,N}_{r}\right|^{p}\right] \,\mathrm{d}s + C\bigg(1+\Big(\mathbb{E}\Big[\big|Y_{0}\big|^{\bar{p}}\Big]\Big)^{\beta}\bigg) \Delta t^{p}.
\end{align}
The derivation from \eqref{Inter-proce:K21} to \eqref{Inter-proce:K225} implies that, for all $p\in[2,\tilde{p}\wedge\hat{p}]$, we have
\begin{equation} \label{Inter-proce:all_K2_deriva}
    \mathbb{E}\left[K_{2}\right] \le C\int_0^t\sup_{r\in[0,s]}\sup_{i\in\mathcal{I}_{N}}\mathbb{E}\left[\left|e^{i,N}_{r}\right|^{p}\right] \,\mathrm{d}s + C\bigg(1+\Big(\mathbb{E}\Big[\big|Y_{0}\big|^{\bar{p}}\Big]\Big)^{\beta}\bigg) \Delta t^{p}.
\end{equation}

Using the Milstein-type schemes given in \eqref{eq:continuous-version-of-MMS} together with the definition of $\mathcal{V}(\partial_{\rho}f^{(v)},\Gamma_{g})$ in \eqref{eq:mathcal_V}, we deduce that
\begin{align}
    K_{3}:=&~\sum_{v=1}^{d}\int_0^t \big|e^{i,N}_{s}\big|^{p-2}  e^{(v),i,N}_{s} \Big(\frac{1}{N}\sum_{k_{1}=1}^{N}\partial_{\rho}f^{(v)\top}\left(Y_{\tau_{n}(s)}^{i,N},\rho_{\tau_{n}(s)}^{Y,N},Y_{\tau_{n}(s)}^{k_{1},N} \right)  \notag \\
    &~\qquad \qquad \qquad   \qquad \qquad \qquad \qquad \times \int_{\tau_{n}(s)}^{s}\Gamma_{1}\left(f\left(Y_{\tau_{n}(s)}^{k_{1},N},\rho_{\tau_{n}(s)}^{Y,N}\right),\Delta t  \right)\mathrm{d}r\Big)\mathrm{d}s  \notag  \\ 
    &~+\sum_{v=1}^{d}\int_0^t \left|e^{i,N}_{s}\right|^{p-2}  e^{(v),i,N}_{s} \,\mathcal{V}\Big(\partial_{\rho}f^{(v)},\Gamma_{g}\Big) \,\mathrm{d}s. 
\end{align}
Subsequently, by replacing $\partial_{y}f^{(v)}$ and $V(\partial_{\rho}f^{(v)},\Gamma_{g})$ in equation \eqref{Inter-proce:K2} with $\partial_{\rho}f^{(v)}$ and $\mathcal{V}(\partial_{\rho}f^{(v)},\Gamma_{g})$, respectively, and following a derivation similar to that from \eqref{Inter-proce:K21} to \eqref{Inter-proce:K225} in the proof of $\mathbb{E}\left[K_{2}\right]$, while invoking the growth condition on $\partial_{\rho}f$ in \eqref{ineq:growth-condition-of-f-mu} and Lemma \ref{lem:mathcal_V_estimate}, we obtain
\begin{align} \label{Inter-proce:all_K3_deriva}
\mathbb{E}\left[K_{3}\right] 
\le &~C\mathbb{E}\bigg[  \int_0^t \int_{\tau_{n}(s)}^{s}\left|e^{i,N}_{r}\right|^{p}\Delta t^{-1}\,dr\,ds\bigg] \notag \\
&~+C\bigg[\sum_{v=1}^{d} \int_0^t \int_{\tau_{n}(s)}^{s}\left(\mathbb{E}\Big[\left(1+\left|X_{r}^{i,N}\right|^{(\gamma+1)p}+\left|Y_{r}^{i,N}\right|^{(\gamma+1)p}\right)\Big]\right)^{\frac{\gamma}{\gamma+1}} \notag \\
&~\qquad \qquad \ \qquad \qquad \qquad \qquad \times \Big(1+\sup_{i\in\mathcal{I}_{N}}\mathbb{E}\Big[\big|Y_{\tau_{n}(r)}^{i,N}\big|^{(2\gamma+3)(\gamma+1)p}\Big]\Big)^{\frac{1}{\gamma+1}}\,dr\,ds\bigg]\Delta t^{p-1}  \notag \\
\le&~ C\int_0^t\sup_{r\in[0,s]}\sup_{i\in\mathcal{I}_{N}}\mathbb{E}\left[\left|e^{i,N}_{r}\right|^{p}\right] \,\mathrm{d}s + C\bigg(1+\Big(\mathbb{E}\Big[\big|Y_{0}\big|^{\bar{p}}\Big]\Big)^{\beta}\bigg) \Delta t^{p}.
\end{align}
Combining \eqref{Inter-proce:K1}, \eqref{Inter-proce:all_K2_deriva}, and \eqref{Inter-proce:all_K3_deriva} yields the desired result and completes the proof of the lemma.
\end{proof}

We adopt the proof technique developed in \cite{kumar2019milstein}, which was originally introduced for the tamed Milstein method applied to SDEs with superlinear growth coefficients. 

\begin{proof}[Proof of Theorem
\ref{thm:convergence-result-particle-scheme}]
Based on \eqref{eq:error-expression}, we apply $\rm It\hat{o}$'s formula to $|e_{t}^{i,N}|^{p}$ and use the Cauchy-Schwarz inequality to derive that
\begin{align*}
    \big|e^{i,N}_{t}\big|^{p} 
     \le &~ p \int_0^t \big|e^{i,N}_{s}\big|^{p-2} \Big\langle e^{i,N}_{s}, \Delta f^{X,Y;\,\Gamma_{1}}_{s,\tau_{n}(s)}\Big\rangle \,\mathrm{d}s +  p\sum_{j=1}^{m}\int_0^t \left|e^{i,N}_{s}\right|^{p-2} \left\langle e^{i,N}_{s}, \Delta g_{j,s,\tau_{n}(s)}^{X,Y;\,\Gamma_{g}}  \right \rangle \mathrm{d}W^{i,j}_{s}  \\
     &~+\frac{p(p-1)}{2}\sum_{j=1}^{m} \int_0^t \left|e^{i,N}_{s}\right|^{p-2}\Big|\Delta g_{j,s,\tau_{n}(s)}^{X,Y;\,\Gamma_{g}}\Big|^{2} \mathrm{d}s.
\end{align*}

By taking expectations on both sides and applying the Cauchy-Schwarz and Young’s inequalities (with $\epsilon>0$), together with Assumptions \ref{ass:assumptions_for_MV_coefficients} and \ref{ass:Coefficient-comparison-conditions-of-Gamma1-Gamma4}, Lemmas \ref{lem:g_and_Gamma_g_difference}, \ref{lem:e_and_f_difference}, and \ref{lem:moment-bounds-of-numerical-solution}, the $\mathbb{W}_{2}$ estimate \eqref{eq:W_2^p}, and the growth condition of $f$ in \eqref{ineq:growth-condition-of-f}, we deduce that, for all $p\in[2,\tilde{p}\wedge \hat{p}]$, the following estimate holds:
\begin{align*}
    \mathbb{E}\left[\big|e^{i,N}_{t}\big|^{p}\right]
     \le &~ p \mathbb{E}\bigg[\int_0^t \big|e^{i,N}_{s}\big|^{p-2} \left\langle e^{i,N}_{s},\Delta f^{X,Y;\,\Gamma_{1}}_{s,\tau_{n}(s)}\right\rangle \,\mathrm{d}s\bigg] +\frac{p(p-1)}{2}\sum_{j=1}^{m} \mathbb{E}\bigg[\int_0^t \left|e^{i,N}_{s}\right|^{p-2}\Big|\Delta g_{j,s,\tau_{n}(s)}^{X,Y;\,\Gamma_{g}}\Big|^{2} \,\mathrm{d}s\bigg] \\
     \le&~p \mathbb{E}\Bigg[\int_0^t \left|e^{i,N}_{s}\right|^{p-2} \Big\langle e^{i,N}_{s}, \Delta f_{s}^{X,Y}\Big\rangle \,\mathrm{d}s\bigg] + p \mathbb{E}\left[\int_0^t \left|e^{i,N}_{s}\right|^{p-2} \left\langle e^{i,N}_{s}, \Delta f_{s,\tau_{n}(s)}^{Y,Y}\right\rangle \mathrm{d}s\right] \\
     &~ + p \mathbb{E}\left[\int_0^t \!\left|e^{i,N}_{s}\right|^{p-2}\! \left\langle e^{i,N}_{s}, \Delta f^{Y,Y;\,\Gamma_{1}}_{\tau_{n}(s)}\right\rangle \mathrm{d}s\right] + \frac{p(p-1)}{2}(1+\epsilon)\sum_{j=1}^{m} \mathbb{E}\left[\int_0^t \!\left|e^{i,N}_{s}\right|^{p-2}\!\big|\Delta g_{j,s}^{X,Y} \big|^{2} \mathrm{d}s\right] \\
     &~+C \sum_{j=1}^{m} \mathbb{E}\left[\int_0^t \left|e^{i,N}_{s}\right|^{p-2}\left|\Delta g_{j,s,\tau_{n}(s)}^{Y,Y;\,\Gamma_{g}}\right|^{2} \mathrm{d}s\right] \notag \\
    \le &~ C\int_{0}^{t} \sup_{ r \in[0,s]} \sup_{i\in \mathcal{I}_{N}}\mathbb{E}\left[\left|e_{r}^{i,N}\right|^{p}\right] \mathrm{d}s + C\bigg(1+\Big(\mathbb{E}\Big[\big|Y_{0}\big|^{\bar{p}}\Big]\Big)^{\beta}\bigg) \Delta t^{p} < \infty.
\end{align*}
Applying the \text{Gr\"onwall} inequality yields the desired conclusion.
\end{proof}

\section{Conclusion and discussion}
This work develops a unified framework of Milstein-type discretizations for the IPS associated with MV-SDEs whose drift and diffusion coefficients grow superlinearly.
Within this framework, we establish moment bounds and prove order-one strong convergence under mild regularity conditions, requiring only once differentiability of the coefficients and bounded operator modifications that scale with a negative power of the time step size. Numerical experiments on mean-square errors show slopes near $1$ in log-log plots across the tested step sizes, consistent with the predicted convergence rate for Milstein-type schemes (TanhM, SineM, TamedM and MixM). 
Compared with the classical Milstein method, which diverges in the superlinear growth, the proposed schemes remain stable and accurately reproduce the qualitative behavior of the underlying dynamics. 
The empirical distributions preserve expected bimodal or unimodal structures while maintaining numerical stability even for relatively large step sizes.
A promising direction for future research is the long-time analysis of the proposed schemes, particularly their ergodic behavior and asymptotic stability in high-dimensional particle systems, which remains a mathematically challenging problem.
 
\section*{Acknowledgements}
The authors would like to thank Prof. Xiaojie Wang, Dr. Chenxu Pang, and Bin Yang for their insightful suggestions and helpful discussions.
This work was supported by the National Natural Science Foundation of China (Grant No. 12371417), the Postdoctoral Fellowship Program of the China Postdoctoral Science Foundation (Grant Nos. GZC2024205 and 2025M773122), and the Innovative Project for Graduate Students of Central South University (Grant No. 2023ZZTS0161).
\bibliographystyle{elsarticle-num-names} 
\bibliography{ref}

\begin{thebibliography}{41}
\expandafter\ifx\csname natexlab\endcsname\relax\def\natexlab#1{#1}\fi
\providecommand{\url}[1]{\texttt{#1}}
\providecommand{\href}[2]{#2}
\providecommand{\path}[1]{#1}
\providecommand{\DOIprefix}{doi:}
\providecommand{\ArXivprefix}{arXiv:}
\providecommand{\URLprefix}{URL: }
\providecommand{\Pubmedprefix}{pmid:}
\providecommand{\doi}[1]{\href{http://dx.doi.org/#1}{\path{#1}}}
\providecommand{\Pubmed}[1]{\href{pmid:#1}{\path{#1}}}
\providecommand{\bibinfo}[2]{#2}
\ifx\xfnm\relax \def\xfnm[#1]{\unskip,\space#1}\fi
%Type = Inproceedings
\bibitem[{Sznitman(1991)}]{sznitman1991topics}
\bibinfo{author}{A.-S. Sznitman},
\newblock \bibinfo{title}{Topics in propagation of chaos},
\newblock in: \bibinfo{editor}{P.-L. Hennequin} (Ed.), \bibinfo{booktitle}{Ecole d'Et{\'e} de Probabilit{\'e}s de Saint-Flour XIX---1989}, \bibinfo{publisher}{Springer Berlin Heidelberg}, \bibinfo{address}{Berlin, Heidelberg}, \bibinfo{year}{1991}, pp. \bibinfo{pages}{165--251}.
%Type = Article
\bibitem[{Braun and Hepp(1977)}]{braun1977vlasov}
\bibinfo{author}{W.~Braun}, \bibinfo{author}{K.~Hepp},
\newblock \bibinfo{title}{The {Vlasov} dynamics and its fluctuations in the {$1/N$} limit of interacting classical particles},
\newblock \bibinfo{journal}{Comm. Math. Phys.} \bibinfo{volume}{56} (\bibinfo{year}{1977}) \bibinfo{pages}{101--113}.
%Type = Article
\bibitem[{Lasry and Lions(2007)}]{lasry2007mean}
\bibinfo{author}{J.-M. Lasry}, \bibinfo{author}{P.-L. Lions},
\newblock \bibinfo{title}{Mean field games},
\newblock \bibinfo{journal}{Jpn. J. Math.} \bibinfo{volume}{2} (\bibinfo{year}{2007}) \bibinfo{pages}{229--260}.
%Type = Book
\bibitem[{Carmona and Delarue(2018)}]{carmona2018probabilistic}
\bibinfo{author}{R.~Carmona}, \bibinfo{author}{F.~Delarue}, \bibinfo{title}{Probabilistic Theory of Mean Field Games with Applications I: Mean Field FBSDEs, Control, and Games}, \bibinfo{publisher}{Springer Cham}, \bibinfo{address}{Cham, Switzerland}, \bibinfo{year}{2018}.
%Type = Book
\bibitem[{Bensoussan et~al.(2013)Bensoussan, Frehse, and Yam}]{bensoussan2013mean}
\bibinfo{author}{A.~Bensoussan}, \bibinfo{author}{J.~Frehse}, \bibinfo{author}{P.~Yam}, \bibinfo{title}{Mean Field Games and Mean Field Type Control Theory}, \bibinfo{publisher}{Springer}, \bibinfo{address}{New York, NY}, \bibinfo{year}{2013}.
%Type = Article
\bibitem[{Baladron et~al.(2012)Baladron, Fasoli, Faugeras, and Touboul}]{baladron2012mean}
\bibinfo{author}{J.~Baladron}, \bibinfo{author}{D.~Fasoli}, \bibinfo{author}{O.~Faugeras}, \bibinfo{author}{J.~Touboul},
\newblock \bibinfo{title}{Mean-field description and propagation of chaos in networks of {Hodgkin-Huxley} and {FitzHugh-Nagumo} neurons},
\newblock \bibinfo{journal}{J. Math. Neurosc.} \bibinfo{volume}{2} (\bibinfo{year}{2012}) \bibinfo{pages}{10}.
%Type = Book
\bibitem[{Milstein and Tretyakov(2021)}]{Milstein2004stochastic}
\bibinfo{author}{G.~N. Milstein}, \bibinfo{author}{M.~V. Tretyakov}, \bibinfo{title}{Stochastic Numerics for Mathematical Physics}, \bibinfo{publisher}{Springer Nature Switzerland AG}, \bibinfo{address}{Cham, Switzerland}, \bibinfo{year}{2021}.
%Type = Book
\bibitem[{Hutzenthaler and Jentzen(2015)}]{hutzenthaler2015numerical}
\bibinfo{author}{M.~Hutzenthaler}, \bibinfo{author}{A.~Jentzen}, \bibinfo{title}{Numerical Approximations of Stochastic Differential Equations with Non-globally Lipschitz Continuous Coefficients}, volume \bibinfo{volume}{236, no. 1112}, \bibinfo{publisher}{American Mathematical Society}, \bibinfo{year}{2015}.
%Type = Article
\bibitem[{Wang et~al.(2020)Wang, Wu, and Dong}]{wang2020mean}
\bibinfo{author}{X.~Wang}, \bibinfo{author}{J.~Wu}, \bibinfo{author}{B.~Dong},
\newblock \bibinfo{title}{{Mean-square convergence rates of stochastic theta methods for SDEs under a coupled monotonicity condition}},
\newblock \bibinfo{journal}{BIT} \bibinfo{volume}{60} (\bibinfo{year}{2020}) \bibinfo{pages}{759--790}.
%Type = Article
\bibitem[{Li et~al.(2018)Li, Mao, and Yin}]{li2019explicit}
\bibinfo{author}{X.~Li}, \bibinfo{author}{X.~Mao}, \bibinfo{author}{G.~Yin},
\newblock \bibinfo{title}{Explicit numerical approximations for stochastic differential equations in finite and infinite horizons: truncation methods, convergence in pth moment and stability},
\newblock \bibinfo{journal}{IMA J. Numer. Anal.} \bibinfo{volume}{39} (\bibinfo{year}{2018}) \bibinfo{pages}{847--892}.
%Type = Article
\bibitem[{Li et~al.(2022)Li, Mao, Song, Wu, and Yin}]{li2022strong}
\bibinfo{author}{Y.~Li}, \bibinfo{author}{X.~Mao}, \bibinfo{author}{Q.~Song}, \bibinfo{author}{F.~Wu}, \bibinfo{author}{G.~Yin},
\newblock \bibinfo{title}{Strong convergence of {Euler-Maruyama} schemes for {McKean-Vlasov} stochastic differential equations under local {Lipschitz} conditions of state variables},
\newblock \bibinfo{journal}{IMA J. Numer. Anal.} \bibinfo{volume}{43} (\bibinfo{year}{2022}) \bibinfo{pages}{1001--1035}.
%Type = Article
\bibitem[{Bao and Huang(2022)}]{bao2022approximations}
\bibinfo{author}{J.~Bao}, \bibinfo{author}{X.~Huang},
\newblock \bibinfo{title}{Approximations of {McKean-Vlasov} stochastic differential equations with irregular coefficients},
\newblock \bibinfo{journal}{J. Theoret. Probab.} \bibinfo{volume}{35} (\bibinfo{year}{2022}) \bibinfo{pages}{1187--1215}.
%Type = Article
\bibitem[{Erban et~al.(2016)Erban, Ha\v{s}kovec, and Sun}]{erban2016cucker}
\bibinfo{author}{R.~Erban}, \bibinfo{author}{J.~Ha\v{s}kovec}, \bibinfo{author}{Y.~Sun},
\newblock \bibinfo{title}{{A Cucker-Smale model with noise and delay}},
\newblock \bibinfo{journal}{SIAM J. Appl. Math.} \bibinfo{volume}{76} (\bibinfo{year}{2016}) \bibinfo{pages}{1535--1557}.
%Type = Article
\bibitem[{Chen and {dos Reis}(2022)}]{chen2022flexible}
\bibinfo{author}{X.~Chen}, \bibinfo{author}{G.~{dos Reis}},
\newblock \bibinfo{title}{{A flexible split-step scheme for solving McKean-Vlasov stochastic differential equations}},
\newblock \bibinfo{journal}{Appl. Math. Comput.} \bibinfo{volume}{427} (\bibinfo{year}{2022}) \bibinfo{pages}{127180}.
%Type = Article
\bibitem[{Hutzenthaler et~al.(2011)Hutzenthaler, Jentzen, and Kloeden}]{hutzenthaler2011strong}
\bibinfo{author}{M.~Hutzenthaler}, \bibinfo{author}{A.~Jentzen}, \bibinfo{author}{P.~E. Kloeden},
\newblock \bibinfo{title}{Strong and weak divergence in finite time of {Euler's} method for stochastic differential equations with non-globally {Lipschitz} continuous coefficients},
\newblock \bibinfo{journal}{Proc. R. Soc. A.} \bibinfo{volume}{467} (\bibinfo{year}{2011}) \bibinfo{pages}{1563--1576}.
%Type = Article
\bibitem[{Mattingly et~al.(2002)Mattingly, Stuart, and Higham}]{mattingly2002ergodicity}
\bibinfo{author}{J.~Mattingly}, \bibinfo{author}{A.~Stuart}, \bibinfo{author}{D.~Higham},
\newblock \bibinfo{title}{{Ergodicity for SDEs and approximations: locally {Lipschitz} vector fields and degenerate noise}},
\newblock \bibinfo{journal}{Stochastic Process. Appl.} \bibinfo{volume}{101} (\bibinfo{year}{2002}) \bibinfo{pages}{185--232}.
%Type = Article
\bibitem[{dos Reis et~al.(2021)dos Reis, Engelhardt, and Smith}]{goncalo2021simulation}
\bibinfo{author}{G.~dos Reis}, \bibinfo{author}{S.~Engelhardt}, \bibinfo{author}{G.~Smith},
\newblock \bibinfo{title}{{Simulation of McKean-Vlasov SDEs with super-linear growth}},
\newblock \bibinfo{journal}{IMA J. Numer. Anal.} \bibinfo{volume}{42} (\bibinfo{year}{2021}) \bibinfo{pages}{874--922}.
%Type = Article
\bibitem[{Liu et~al.(2023{\natexlab{a}})Liu, Shi, and Wu}]{liu2023tamed}
\bibinfo{author}{H.~Liu}, \bibinfo{author}{B.~Shi}, \bibinfo{author}{F.~Wu},
\newblock \bibinfo{title}{Tamed {Euler-Maruyama} approximation of {McKean-Vlasov} stochastic differential equations with super-linear drift and {H\"older} diffusion coefficients},
\newblock \bibinfo{journal}{Appl. Numer. Math.} \bibinfo{volume}{183} (\bibinfo{year}{2023}{\natexlab{a}}) \bibinfo{pages}{56--85}.
%Type = Article
\bibitem[{Liu et~al.(2023{\natexlab{b}})Liu, Wu, and Wu}]{huagui2023tamed}
\bibinfo{author}{H.~Liu}, \bibinfo{author}{F.~Wu}, \bibinfo{author}{M.~Wu},
\newblock \bibinfo{title}{The tamed {Euler-Maruyama} approximation of {Mckean-Vlasov} stochastic differential equations and asymptotic error analysis},
\newblock \bibinfo{journal}{Discrete Contin. Dyn. Syst. Ser. S} \bibinfo{volume}{16} (\bibinfo{year}{2023}{\natexlab{b}}) \bibinfo{pages}{1014--1040}.
%Type = Article
\bibitem[{Chen and dos Reis(2023)}]{chen2023euler}
\bibinfo{author}{X.~Chen}, \bibinfo{author}{G.~dos Reis},
\newblock \bibinfo{title}{{Euler simulation of interacting particle systems and McKean-Vlasov SDEs with fully super-linear growth drifts in space and interaction}},
\newblock \bibinfo{journal}{IMA J. Numer. Anal.} \bibinfo{volume}{44} (\bibinfo{year}{2023}) \bibinfo{pages}{751--796}.
%Type = Article
\bibitem[{Kumar et~al.(2022)Kumar, Neelima, Reisinger, and Stockinger}]{kumar2022well}
\bibinfo{author}{C.~Kumar}, \bibinfo{author}{Neelima}, \bibinfo{author}{C.~Reisinger}, \bibinfo{author}{W.~Stockinger},
\newblock \bibinfo{title}{{Well-posedness and tamed schemes for McKean-Vlasov equations with common noise}},
\newblock \bibinfo{journal}{Ann. Appl. Probab.} \bibinfo{volume}{32} (\bibinfo{year}{2022}) \bibinfo{pages}{3283--3330}.
%Type = Misc
\bibitem[{Cui et~al.(2025)Cui, Li, Liu, and Wang}]{yuanping2025explicit}
\bibinfo{author}{Y.~Cui}, \bibinfo{author}{X.~Li}, \bibinfo{author}{Y.~Liu}, \bibinfo{author}{F.-Y. Wang}, \bibinfo{title}{Explicit numerical approximations for {McKean-Vlasov} stochastic differential equations in finite and infinite time}, \bibinfo{year}{2025}. \href{http://arxiv.org/abs/2401.02878}{{\tt arXiv:2401.02878}}.
%Type = Article
\bibitem[{Guo et~al.(2024)Guo, He, and Li}]{guo2024convergence}
\bibinfo{author}{Q.~Guo}, \bibinfo{author}{J.~He}, \bibinfo{author}{L.~Li},
\newblock \bibinfo{title}{{Convergence analysis of an explicit method and its random batch approximation for the McKean-Vlasov equations with non-globally Lipschitz conditions}},
\newblock \bibinfo{journal}{ESAIM Math. Model. Numer. Anal.} \bibinfo{volume}{58} (\bibinfo{year}{2024}) \bibinfo{pages}{639--671}.
%Type = Article
\bibitem[{Chen et~al.(2025)Chen, dos Reis, and Stockinger}]{chen2025wellposedness}
\bibinfo{author}{X.~Chen}, \bibinfo{author}{G.~dos Reis}, \bibinfo{author}{W.~Stockinger},
\newblock \bibinfo{title}{{Wellposedness, exponential ergodicity and numerical approximation of fully super-linear McKean-Vlasov SDEs and associated particle systems}},
\newblock \bibinfo{journal}{Electron. J. Probab.} \bibinfo{volume}{30} (\bibinfo{year}{2025}) \bibinfo{pages}{1--50}.
%Type = Article
\bibitem[{Jian et~al.(2025)Jian, Song, Wang, Zhang, and Zhao}]{jian2025modified}
\bibinfo{author}{J.~Jian}, \bibinfo{author}{Q.~Song}, \bibinfo{author}{X.~Wang}, \bibinfo{author}{Z.~Zhang}, \bibinfo{author}{Y.~Zhao},
\newblock \bibinfo{title}{{On modified Euler methods for McKean-Vlasov stochastic differential equations with super-linear coefficients}},
\newblock \bibinfo{journal}{Automatica J. IFAC} \bibinfo{volume}{177} (\bibinfo{year}{2025}) \bibinfo{pages}{112284}.
%Type = Article
\bibitem[{Reisinger and Stockinger(2022)}]{christoph2022adaptive}
\bibinfo{author}{C.~Reisinger}, \bibinfo{author}{W.~Stockinger},
\newblock \bibinfo{title}{{An adaptive Euler-Maruyama scheme for McKean-Vlasov SDEs with super-linear growth and application to the mean-field FitzHugh-Nagumo model}},
\newblock \bibinfo{journal}{J. Comput. Appl. Math.} \bibinfo{volume}{400} (\bibinfo{year}{2022}) \bibinfo{pages}{113725}.
%Type = Misc
\bibitem[{Neelima et~al.(2020)Neelima, Biswas, Kumar, dos Reis, and Reisinger}]{neelima2020well}
\bibinfo{author}{Neelima}, \bibinfo{author}{S.~Biswas}, \bibinfo{author}{C.~Kumar}, \bibinfo{author}{G.~dos Reis}, \bibinfo{author}{C.~Reisinger}, \bibinfo{title}{{Well-posedness and tamed Euler schemes for McKean-Vlasov equations driven by L\'evy noise}}, \bibinfo{year}{2020}. \href{http://arxiv.org/abs/2010.08585}{{\tt arXiv:2010.08585}}.
%Type = Article
\bibitem[{{Khue Tran} et~al.(2025){Khue Tran}, Kieu, Luong, and Ngo}]{khue2025infinite}
\bibinfo{author}{N.~{Khue Tran}}, \bibinfo{author}{T.-T. Kieu}, \bibinfo{author}{D.-T. Luong}, \bibinfo{author}{H.-L. Ngo},
\newblock \bibinfo{title}{{On the infinite time horizon approximation for L\'{e}vy-driven McKean-Vlasov SDEs with non-globally Lipschitz continuous and super-linearly growth drift and diffusion coefficients}},
\newblock \bibinfo{journal}{Journal of Mathematical Analysis and Applications} \bibinfo{volume}{543} (\bibinfo{year}{2025}) \bibinfo{pages}{128982}.
%Type = Misc
\bibitem[{Biswas et~al.(2025)Biswas, Kumar, Reisinger, and Schwarz}]{biswas2025milstein}
\bibinfo{author}{S.~Biswas}, \bibinfo{author}{C.~Kumar}, \bibinfo{author}{C.~Reisinger}, \bibinfo{author}{V.~Schwarz}, \bibinfo{title}{{Milstein-type schemes for McKean-Vlasov SDEs driven by Brownian motion and Poisson random measure (with super-linear coefficients)}}, \bibinfo{year}{2025}. \href{http://arxiv.org/abs/2411.11759}{{\tt arXiv:2411.11759}}.
%Type = Misc
\bibitem[{Zhu et~al.(2025)Zhu, Zhao, and Gan}]{zhu2025euler}
\bibinfo{author}{J.~Zhu}, \bibinfo{author}{Y.~Zhao}, \bibinfo{author}{S.~Gan}, \bibinfo{title}{{Euler-type methods for L\'evy-driven McKean-Vlasov SDEs with super-linear coefficients: mean-square error analysis}}, \bibinfo{year}{2025}. \href{http://arxiv.org/abs/2509.09302}{{\tt arXiv:2509.09302}}.
%Type = Incollection
\bibitem[{Giles(2006)}]{giles2006improved}
\bibinfo{author}{M.~Giles},
\newblock \bibinfo{title}{{Improved multilevel Monte Carlo Convergence Using the Milstein Scheme}},
\newblock in: \bibinfo{booktitle}{Monte Carlo and Quasi-Monte Carlo Methods 2006}, \bibinfo{publisher}{Springer}, \bibinfo{year}{2006}, pp. \bibinfo{pages}{343--358}.
%Type = Article
\bibitem[{Giles(2008)}]{giles2008multilevel}
\bibinfo{author}{M.~B. Giles},
\newblock \bibinfo{title}{{Multilevel Monte Carlo path simulation}},
\newblock \bibinfo{journal}{Operations research} \bibinfo{volume}{56} (\bibinfo{year}{2008}) \bibinfo{pages}{607--617}.
%Type = Article
\bibitem[{Bao et~al.(2021)Bao, Reisinger, Ren, and Stockinger}]{bao2021first}
\bibinfo{author}{J.~Bao}, \bibinfo{author}{C.~Reisinger}, \bibinfo{author}{P.~Ren}, \bibinfo{author}{W.~Stockinger},
\newblock \bibinfo{title}{{First-order convergence of Milstein schemes for McKean-Vlasov equations and interacting particle systems}},
\newblock \bibinfo{journal}{Proc. R. Soc. A.} \bibinfo{volume}{477} (\bibinfo{year}{2021}) \bibinfo{pages}{20200258}.
%Type = Article
\bibitem[{Kumar and Neelima(2021)}]{kumar2021explicit}
\bibinfo{author}{C.~Kumar}, \bibinfo{author}{Neelima},
\newblock \bibinfo{title}{{On explicit Milstein-type scheme for McKean-Vlasov stochastic differential equations with super-linear drift coefficient}},
\newblock \bibinfo{journal}{Electron. J. Probab.} \bibinfo{volume}{26} (\bibinfo{year}{2021}) \bibinfo{pages}{1--32}.
%Type = Article
\bibitem[{Bao et~al.(2023)Bao, Reisinger, Ren, and Stockinger}]{bao2023milstein}
\bibinfo{author}{J.~Bao}, \bibinfo{author}{C.~Reisinger}, \bibinfo{author}{P.~Ren}, \bibinfo{author}{W.~Stockinger},
\newblock \bibinfo{title}{{Milstein schemes and antithetic multilevel Monte Carlo sampling for delay McKean-Vlasov equations and interacting particle systems}},
\newblock \bibinfo{journal}{IMA J. Numer. Anal.} \bibinfo{volume}{44} (\bibinfo{year}{2023}) \bibinfo{pages}{2437--2479}.
%Type = Article
\bibitem[{Gy{\"o}ngy and and(1980)}]{Gyongy01011980}
\bibinfo{author}{I.~Gy{\"o}ngy}, \bibinfo{author}{N.~V.~K. and},
\newblock \bibinfo{title}{{On stochastic equations with respect to semimartingales I}},
\newblock \bibinfo{journal}{Stochastics} \bibinfo{volume}{4} (\bibinfo{year}{1980}) \bibinfo{pages}{1--21}.
%Type = Article
\bibitem[{Zhang and Ma(2017)}]{ZHANG20171}
\bibinfo{author}{Z.~Zhang}, \bibinfo{author}{H.~Ma},
\newblock \bibinfo{title}{{Order-preserving strong schemes for SDEs with locally Lipschitz coefficients}},
\newblock \bibinfo{journal}{Appl. Numer. Math.} \bibinfo{volume}{112} (\bibinfo{year}{2017}) \bibinfo{pages}{1--16}.
%Type = Misc
\bibitem[{Zhao et~al.(2024)Zhao, Wang, and Zhang}]{zhao2024weakerroranalysisstrong}
\bibinfo{author}{Y.~Zhao}, \bibinfo{author}{X.~Wang}, \bibinfo{author}{Z.~Zhang}, \bibinfo{title}{{Weak error analysis for strong approximation schemes of SDEs with super-linear coefficients II: finite moments and higher-order schemes}}, \bibinfo{year}{2024}. \href{http://arxiv.org/abs/2406.14065}{{\tt arXiv:2406.14065}}.
%Type = Article
\bibitem[{Wiktorsson(2001)}]{wiktorsson2001joint}
\bibinfo{author}{M.~Wiktorsson},
\newblock \bibinfo{title}{{Joint characteristic function and simultaneous simulation of iterated It\^{o} integrals for multiple independent Brownian motions}},
\newblock \bibinfo{journal}{Ann. Appl. Probab.} \bibinfo{volume}{11} (\bibinfo{year}{2001}) \bibinfo{pages}{470--487}.
%Type = Article
\bibitem[{Kumar and Sabanis(2019)}]{kumar2019milstein}
\bibinfo{author}{C.~Kumar}, \bibinfo{author}{S.~Sabanis},
\newblock \bibinfo{title}{{On Milstein approximations with varying coefficients: the case of super-linear diffusion coefficients}},
\newblock \bibinfo{journal}{BIT} \bibinfo{volume}{59} (\bibinfo{year}{2019}) \bibinfo{pages}{929--968}.
%Type = Article
\bibitem[{dos Reis et~al.(2019)dos Reis, Salkeld, and Tugaut}]{goncalo2019freidlin}
\bibinfo{author}{G.~dos Reis}, \bibinfo{author}{W.~Salkeld}, \bibinfo{author}{J.~Tugaut},
\newblock \bibinfo{title}{{Freidlin-Wentzell LDP in path space for McKean-Vlasov equations and the functional iterated logarithm law}},
\newblock \bibinfo{journal}{Ann. Appl. Probab.} \bibinfo{volume}{29} (\bibinfo{year}{2019}) \bibinfo{pages}{1487--1540}.

\end{thebibliography}
%%%%%%%%%%%%%%%%%%%%%%%%%%%%%%%%%%%%%%
\appendix
\section{Differentiability of functions defined on measures.} 
\label{appen:differen_measure}
A function $h: \mathscr{P}_2(\mathbb{R}^d) \to \mathbb{R} $ is said to be differentiable at $ \nu_0 \in \mathscr{P}_2(\mathbb{R}^d) $ if there exists a square-integrable $ \mathbb{R}^d $-valued random variable $ Y_0 \in L^2(\tilde{\Omega}; \mathbb{R}^d) $ on an atomless Polish space $(\tilde{\Omega},\tilde{\mathcal{F}},\tilde{\mathbb{P}})$ such that $ \mathscr{L}_{Y_0} = \nu_0 $, and the lifted function $ H(Z) := h(\mathscr{L}_Z) $ is Fr\'echet differentiable at $Y_0 $. The function $ h $ is said to belong to $ C^1(\mathscr{P}_2(\mathbb{R}^{d})) $ if its lifting $ H(X) = h(\mathscr{L}_X) $, defined on $ L^2(\tilde{\Omega}; \mathbb{R}^d) $, lies in $ C^1(L^2(\tilde{\Omega}; \mathbb{R}^d)) $. By the Riesz representation theorem, the derivative $ H'(Y_0) \in (L^2(\tilde{\Omega}; \mathbb{R}^d))^* $ admits a unique representation:
$$
H'(Y_0)(Z) = \tilde{\mathbb{E}} \left[ \langle D H(Y_0), Z \rangle \right], \quad \forall Z \in L^2(\tilde{\Omega}; \mathbb{R}^d),
$$
with $ DH(Y_0) \in L^2(\tilde{\Omega}; \mathbb{R}^d) $. It follows from \cite[Proposition 5.25]{carmona2018probabilistic} that there exists a measurable function $\partial_\rho h(\nu_0): \mathbb{R}^d \to \mathbb{R}^d$, independent of the choice of $Y_0$, such that
$$
DH(Y_0) = \partial_\rho h(\nu_0)(Y_0), \quad \text{and} \quad \int_{\mathbb{R}^d} |\partial_\rho h(\nu_0)(x)|^2 \nu_0(dx) < \infty.
$$
This function $ \partial_\rho h(\nu_0)(\cdot) $ is called the \emph{Lions derivative} of $ h $, and we write $ \partial_\rho h: \mathscr{P}_2(\mathbb{R}^d) \times \mathbb{R}^d \to \mathbb{R}^d $, with $ \partial_\rho h(\nu, x) := \partial_\rho h(\nu)(x) $. When dealing with vector- or matrix-valued functions, this definition is understood to hold for each component individually.
%\fi

\section{Proof of Lemma \ref{lem:moment-bounds-of-numerical-solution}}
\label{appen:proof-moment-bound-modi-method}
\begin{proof}
Let $\mathcal{R} > 0$ be sufficiently large and define a sequence of decreasing subevents
$$
\Omega_{\mathcal R, k}^{i} = \left\{\omega \in \Omega : \left\vert Y_{t_j}^{i, N}(\omega)\right\vert \leq \mathcal R, \, j = 0,1,\dots, k, i\in \mathcal{I}_{N}\right\},
$$
for $k = 0,1, \dots, n$. We denote the complement of $\Omega_{\mathcal R, k}^{i}$ by $\Omega_{\mathcal R, k}^{i,c}$.

To begin, we show that the boundedness of higher-order moments is preserved within the selected family of subevents. Firstly, if $\bar{p}$ is a sufficiently large even number, we notice that
\begin{align}
\label{Inter-proce:I}
& \mathbb{E} \left[\mathds{1}_{\Omega_{\mathcal{R}, k+1}^{i}} \left\vert Y_{t_{k+1}}^{i, N} \right\vert^{\bar p} \right] \leq \mathbb{E} \left[\mathds{1}_{\Omega_{\mathcal R, k}^{i}} \left|Y_{t_{k+1}}^{i, N}\right|^{\bar p}\right]  
=  \mathbb{E} \left[\mathds{1}_{\Omega_{\mathcal R, k}^{i}} \left|Y_{t_{k+1}}^{i, N} - Y_{t_k}^{i, N} + Y_{t_k}^{i, N} \right|^{\bar p} \right] \notag \\
\leq &~ \mathbb{E} \left[\mathds{1}_{\Omega_{\mathcal R, k}^{i}} \left|Y_{t_k}^{i, N} \right|^{\bar{p}} \right] + \mathbb{E} \left[\mathds{1}_ {\Omega_{\mathcal R ,k}^{i}} \left| Y_{t_k}^{i, N} \right|^{\bar p - 2} \left( \bar p \left \langle Y_{t_k}^{i, N}, Y_{t_{k+1}}^{i, N} - Y_{t_k}^{i, N} \right\rangle + \frac{\bar{p}(\bar{p}-1)}{2} \left|Y_{t_{k+1}}^{i, N} - Y_{t_k}^{i, N} \right|^{2} \right) \right] \notag \\
&~ + C~ \sum_{l = 3}^{\bar p} \mathbb{E} \left[\mathds{1}_{\Omega_{\mathcal R, k}^{i}} \left|Y_{t_k}^{i, N}\right|^{\bar p - l} \left|Y_{t_{k+1}}^{i, N} - Y_{t_k}^{i, N} \right|^{l} \right] \notag \\
:= &~ \mathbb{E} \left[\mathds{1}_{\Omega_{\mathcal R, k}^{i} } \left|Y_{t_k}^{i, N} \right|^{\bar p} \right] + I_{1} + I_{2}.
\end{align}

According to the Milstein-type schemes \eqref{eq:modified_Milstein_method}, and by using the conditional expectation along with the martingale property of $\rm It\hat{o}$ integrals, one can deduce that
\begin{align} \label{Inter-proce:I1}
    I_{1} 
    =&~\bar{p}~ \mathbb{E} \bigg[\mathds{1}_{\Omega_{\mathcal R ,k}^{i}}\Big|Y_{t_{k}}^{i,N}\Big|^{\bar p - 2}  \left \langle Y_{t_{k}}^{i,N}, \Gamma_{1}\left(f \left(Y_{t_{k}}^{i,N}, \rho_{t_{k}}^{Y, N} \right),\Delta t \right) \Delta t \right \rangle \bigg] \notag \\
    &~+\frac{\bar{p}(\bar{p}-1)}{2}~\mathbb{E}\bigg[\mathds{1}_{\Omega_{\mathcal R ,k}^{i}}\left|Y_{t_{k}}^{i,N}\right|^{\bar p - 2}\Big|\Gamma_{1}\left(f \left(Y_{t_{k}}^{i, N}, \rho_{t_{k}}^{Y, N} \right),\Delta t \right) \Delta t\Big|^{2}\bigg] \notag \\ 
    &~+\frac{\bar{p}(\bar{p}-1)}{2}~\mathbb{E}\bigg[\mathds{1}_{\Omega_{\mathcal R ,k}^{i}}\left|Y_{t_{k}}^{i,N}\right|^{\bar p - 2}\Big|\sum_{j_{1}=1}^{m} \Gamma_{2}\left(g_{j_{1}} \left(Y_{t_{k}}^{i, N}, \rho_{t_{k}}^{Y, N} \right), \Delta t\right) \Delta W^{i,j_{1}}_{t_{k}}\Big|^{2}\bigg] \notag \\ 
    &~+\frac{\bar{p}(\bar{p}-1)}{2}~\mathbb{E}\bigg[\mathds{1}_{\Omega_{\mathcal R ,k}^{i}}\left|Y_{t_{k}}^{i,N}\right|^{\bar p - 2}\Big|\sum_{j_{1},j_{2}=1}^{m} \Gamma_{3}\left(\mathcal{L}_{y}^{j_{2}}~g_{j_{1}}\left(Y_{t_{k}}^{i, N}, \rho_{t_{k}}^{Y, N} \right),\Delta t\right) \int_{t_{k}}^{t_{k+1}} \int_{t_{k}}^{s} \,\mathrm{d} W^{i,j_{2}}_{r}  \,\mathrm{d} W^{i,j_{1}}_{s} \Big|^{2}\bigg] \notag \\
    &~+\frac{\bar{p}(\bar{p}-1)}{2}~\mathbb{E}\bigg[\mathds{1}_{\Omega_{\mathcal R ,k}^{i}}\left|Y_{t_{k}}^{i,N}\right|^{\bar p - 2}\Big|\frac{1}{N}\sum_{j_{1},j_{2}=1}^{m} \sum_{k_{1}=1}^{N} \Gamma_{4}\left( \mathcal{L}_{\rho}^{j_{2}}~g_{j_{1}} \left(Y_{t_{k}}^{i,N},\rho_{t_{k}}^{Y,N},Y_{t_{k}}^{k_{1},N}\right) ,\Delta t\right)    \notag \\
    &~\qquad \qquad \qquad \qquad  
    \times \int_{t_{k}}^{t_{k+1}}  \int_{t_{k}}^{s} \,\mathrm{d} W^{k_{1},j_{2}}_{r} \,\mathrm{d} W^{i,j_{1}}_{s} \Big|^{2}\bigg] \notag \\
    &~+\bar{p}(\bar{p}-1)~\mathbb{E} \bigg[\mathds{1}_{\Omega_{\mathcal R ,k}^{i}}\left|Y_{t_{k}}^{i,N}\right|^{\bar p - 2} \Big \langle \sum_{j_{1},j_{2}=1}^{m} \Gamma_{3}\left(\mathcal{L}_{y}^{j_{2}}~g_{j_{1}}\left(Y_{t_{k}}^{i, N}, \rho_{t_{k}}^{Y, N} \right),\Delta t\right) \int_{t_{k}}^{t_{k+1}} \int_{t_{k}}^{s} \,\mathrm{d} W^{i,j_{2}}_{r}  \,\mathrm{d} W^{i,j_{1}}_{s}  ,  \notag \\
    &~\qquad \qquad \qquad \qquad \frac{1}{N}\sum_{j_{1},j_{2}=1}^{m} \sum_{k_{1}=1}^{N} \Gamma_{4}\left( \mathcal{L}_{\rho}^{j_{2}}~g_{j_{1}} \left(Y_{t_{k}}^{i,N},\rho_{t_{k}}^{Y,N},Y_{t_{k}}^{k_{1},N}\right) ,\Delta t\right)\int_{t_{k}}^{t_{k+1}}  \int_{t_{k}}^{s} \,\mathrm{d} W^{k_{1},j_{2}}_{r} \,\mathrm{d} W^{i,j_{1}}_{s}  \Big\rangle\bigg]  \notag \\    :=&~I_{1,1}+I_{1,2}+I_{1,3}+I_{1,4}+I_{1,5}+I_{1,6}.
\end{align}
By applying the Cauchy-Schwarz inequality, Assumption \ref{ass:Coefficient-comparison-conditions-of-f}, the growth condition of $f$ given in \eqref{ineq:growth-condition-of-f}, and Young's inequality, one can derive
\begin{align} \label{Inter-proce:I11}
 I_{1,1} 
 =&~\bar{p}~\mathbb{E} \bigg[\mathds{1}_{\Omega_{\mathcal R ,k}^{i}}\left|Y_{t_{k}}^{i,N}\right|^{\bar p - 2} \left \langle Y_{t_{k}}^{i,N}, \Gamma_{1}\left(f \left(Y_{t_{k}}^{i, N}, \rho_{t_{k}}^{Y, N} \right),\Delta t \right) -f \left(Y_{t_{k}}^{i, N}, \rho_{t_{k}}^{Y, N} \right) \right \rangle  \bigg] \Delta t \notag \\
 &~+\bar{p}~\mathbb{E}\left[\mathds{1}_{\Omega_{\mathcal R ,k}^{i}}\left|Y_{t_{k}}^{i,N}\right|^{\bar p - 2} \left \langle Y_{t_{k}}^{i,N}, f \left(Y_{t_{k}}^{i, N}, \rho_{t_{k}}^{Y, N} \right) \right \rangle  \right] \Delta t \notag \\
 \le &~\underbrace{ C \mathbb{E} \bigg[\mathds{1}_{\Omega_{\mathcal R ,k}^{i}}\Big(1+\left|Y_{t_{k}}^{i,N}\right|^{\bar p - 1+ r_{2}(\gamma +1)}  +  \mathbb{W}_{2}^{\bar{p}-1+r_{2}}\Big( \rho_{t_{k}}^{Y, N},\delta_{0}\Big)\Big) \bigg] \Delta t^{1+r_{1}}}_{I_{1,A}} \notag \\
  &~+\underbrace{\bar{p}~\mathbb{E}\bigg[\mathds{1}_{\Omega_{\mathcal R ,k}^{i}}\left|Y_{t_{k}}^{i,N}\right|^{\bar p - 2} \Big \langle Y_{t_{k}}^{i,N}, f \left(Y_{t_{k}}^{i, N}, \rho_{t_{k}}^{Y, N} \right) \Big \rangle \bigg] \Delta t }_{I_{1,B}} .
\end{align} 
Using Assumption \ref{ass:Gamma-control-conditions}, the growth condition of $f$ in \eqref{ineq:growth-condition-of-f}, and Young's inequality, one can obtain
\begin{align}\label{Inter-proce:I12}
    I_{1,2}
    \le &~C \mathbb{E}\bigg[\mathds{1}_{\Omega_{\mathcal R ,k}^{i}}\left|Y_{t_{k}}^{i,N}\right|^{\bar p - 2} \left|f \left(Y_{t_{k}}^{i, N}, \rho_{t_{k}}^{Y, N} \right) \right|^{2}\bigg] \Delta t^{2}  \notag \\
     \le &~\underbrace{  C \mathbb{E} \bigg[\mathds{1}_{\Omega_{\mathcal R ,k}^{i}}\Big(1+\left|Y_{t_{k}}^{i,N}\right|^{\bar p + 2\gamma} + \mathbb{W}_{2}^{\bar{p}}\Big( \rho_{t_{k}}^{Y, N},\delta_{0}\Big) \Big) \bigg] \Delta t^{2}}_{I_{1,C}}.
\end{align}
In view of the independence of $\Delta W^{i,j_{1}}_{t_{k}}, j_{1}=1,\cdots,m$, the identity $\mathbb{E}\Big(\Big|\Delta W^{i,j_{1}}_{t_{k}}\Big|^{2}\Big) =  \Delta  t$, and Assumption \ref{ass:Gamma-control-conditions}, it follows that
\begin{align} \label{Inter-proce:I13}
    I_{1,3}
    = &~\frac{\bar{p}(\bar{p}-1)}{2}~\mathbb{E}\bigg[\mathds{1}_{\Omega_{\mathcal R ,k}^{i}}\left|Y_{t_{k}}^{i,N}\right|^{\bar p - 2} \sum_{j_{1}=1}^{m} \left| \Gamma_{2}\left(g_{j_{1}} \left(Y_{t_{k}}^{i, N}, \rho_{t_{k}}^{Y, N} \right), \Delta t\right) \right|^{2} \left.\mathbb{E}\left(\left|\Delta W^{i,j_{1}}_{t_{k}} \right|^{2} \right \arrowvert \mathcal{F}_{t_{k}} \right) \bigg]  \notag \\
    \le &~\underbrace{\frac{\bar{p}(\bar{p}-1)}{2}~\mathbb{E}\bigg[\mathds{1}_{\Omega_{\mathcal R ,k}^{i}}\left|Y_{t_{k}}^{i,N}\right|^{\bar p - 2} \sum_{j_{1}=1}^{m} \left| g_{j_{1}} \left(Y_{t_{k}}^{i, N}, \rho_{t_{k}}^{Y, N} \right) \right|^{2} \bigg] \Delta t}_{I_{1,D}}.
\end{align}
Based on the properties of the conditional expectation, the identity $\mathbb{E}\Big(\Big|\int_{t_{k}}^{t_{k+1}} \int_{t_{k}}^{s}\, \mathrm{d} W^{i,j_{2}}_{r} \,\mathrm{d} W^{i,j_{1}}_{s} \Big |^{2}\Big \arrowvert \mathcal{F}_{t_{k}}\Big) = \frac{1}{2}\Delta  t^{2},$ $j_{1},j_{2}=1,2,\dots,m$, Assumption \ref{ass:Gamma-control-conditions}, the growth conditions of $g$ and $\partial_{y}g_{j}$ given in \eqref{ineq:growth-condition-of-g} and \eqref{ineq:growth-condition-of-g-y}, respectively, and Young's inequality, one can infer that 
\begin{align} \label{Inter-proce:I14}
      I_{1,4}
    \le&~C\mathbb{E}\bigg[\mathds{1}_{\Omega_{\mathcal R ,k}^{i}}\left|Y_{t_{k}}^{i,N}\right|^{\bar p - 2} \sum_{j_{1},j_{2}=1}^{m} \left| \Gamma_{3}\left(\mathcal{L}_{y}^{j_{2}}~g_{j_{1}}\left(Y_{t_{k}}^{i, N}, \rho_{t_{k}}^{Y, N} \right),\Delta t\right)\right|^{2} \mathbb{E}\Big( \Big|\int_{t_{k}}^{t_{k+1}} \int_{t_{k}}^{s}\, \mathrm{d} W^{i,j_{2}}_{r}  \,\mathrm{d} W^{i,j_{1}}_{s} \Big|^{2}\Big \arrowvert \mathcal{F}_{t_{k}}\Big ) \bigg]  \notag \\
    \le &~ C \mathbb{E}\left[\mathds{1}_{\Omega_{\mathcal R ,k}^{i}}\left|Y_{t_{k}}^{i,N}\right|^{\bar p - 2} \sum_{j_{1},j_{2}=1}^{m} \left| \Gamma_{3}\left(\mathcal{L}_{y}^{j_{2}}~g_{j_{1}}\left(Y_{t_{k}}^{i, N}, \rho_{t_{k}}^{Y, N} \right),\Delta t\right)\right|^{2} \right] \Delta t ^{2}  \notag \\
    \le &~\underbrace{ C\mathbb{E}\left[\mathds{1}_{\Omega_{\mathcal R ,k}^{i}} \left(1+\left|Y_{t_k}^{i,N}\right|^{\bar{p}+2\gamma}+ \mathbb{W}_{2}^{\bar{p}+\gamma+2}\left( \rho_{t_{k}}^{Y, N},\delta_{0}\right) \right)\right] \Delta t^{2}.}_{I_{1,E}}
\end{align}
By the elementary inequality, the properties of conditional expectation, the $\rm It\hat{o}$ isometry, Assumption \ref{ass:Gamma-control-conditions}, the growth conditions of $g$ and $\partial_{\rho}g_{j}$ in \eqref{ineq:growth-condition-of-g} and \eqref{ineq:growth-condition-of-g-mu}, respectively, and Young's inequality, it follows that  
\begin{align} \label{Inter-proce:I15}
   I_{1,5}
  \le &~C \mathbb{E}\bigg[\mathds{1}_{\Omega_{\mathcal R ,k}^{i}}\left|Y_{t_{k}}^{i,N}\right|^{\bar p - 2} \frac{1}{N}\sum_{j_{1},j_{2}=1}^{m} \sum_{k_{1}=1}^{N} \left|\Gamma_{4}\left( \mathcal{L}_{\rho}^{j_{2}}~g_{j_{1}} \left(Y_{t_{k}}^{i,N},\rho_{t_{k}}^{Y,N},Y_{t_{k}}^{k_{1},N}\right) ,\Delta t\right) \right|^{2}    \notag \\
  &~\qquad \qquad \qquad \qquad  \times \mathbb{E}\Big(\Big|
  \int_{t_{k}}^{t_{k+1}}  \int_{t_{k}}^{s} \,\mathrm{d} W^{k_{1},j_{2}}_{r} \,\mathrm{d} W^{i,j_{1}}_{s} \Big|^{2}\Big\arrowvert\mathcal{F}_{t_{k}}\Big)\bigg] \notag \\
  \le&~C \mathbb{E}\bigg[\mathds{1}_{\Omega_{\mathcal R ,k}^{i}}\left|Y_{t_{k}}^{i,N}\right|^{\bar p - 2} \frac{1}{N}\sum_{j_{1},j_{2}=1}^{m} \sum_{k_{1}=1}^{N}  \left| \mathcal{L}_{\rho}^{j_{2}}~g_{j_{1}} \left(Y_{t_{k}}^{i,N},\rho_{t_{k}}^{Y,N},Y_{t_{k}}^{k_{1},N}\right) \right|^{2} \bigg]\Delta t ^{2}  \notag \\
  \le &~ \underbrace{C\mathbb{E} \bigg[\mathds{1}_{\Omega_{\mathcal R ,k}^{i}}\Big(1+\left|Y_{t_{k}}^{i,N}\right|^{\bar{p}+2\gamma+2}+ \frac{1}{N}\sum_{k_{1}=1}^{N}\left|Y_{t_{k}}^{k_{1},N}\right|^{\bar{p}+2\gamma+2}+ \mathbb{W}_{2}^{\bar{p}+\gamma+2}\left(\rho_{t_{k}}^{Y,N},\delta_{0}\right)\Big)\bigg] \Delta t^{2}}_{I_{1,F}}.
\end{align}
Taking into account the properties of conditional expectation, the independence of multiple stochastic integrals $\int_{t_{k}}^{t_{k+1}} \int_{t_{k}}^{s} \mathrm{d} W^{k_{1},j_{2}}_{r}  \mathrm{d} W^{i,j_{1}}_{s}, \,\text{for}\,i\ne k_{1} \, \text{or} \, j_{1} \ne j_{2},\, i,k\in\mathcal{I}_{N}, j_{1},j_{2}=1,2,\dots,m$, the martingale property of the $\rm It\hat{o}$ integral, the Cauchy-Schwarz inequality, Assumption \ref{ass:Gamma-control-conditions}, the growth conditions of $g$, $\partial_{y}g_{j}$ and $\partial_{\rho}g_{j}$ stated in \eqref{ineq:growth-condition-of-g}, \eqref{ineq:growth-condition-of-g-y}, and \eqref{ineq:growth-condition-of-g-mu}, respectively, as well as Young’s inequality, it follows that 
\begin{align}\label{Inter-proce:I16}
     I_{1,6} 
    =&~\bar{p}\left(\bar{p}-1\right) \mathbb{E} \bigg[\mathds{1}_{\Omega_{\mathcal R ,k}^{i}}\left|Y_{t_{k}}^{i,N}\right|^{\bar p - 2}  \frac{1}{N}  \sum_{j_{1},j_{2}=1}^{m}  \Big\langle  \Gamma_{3}\left(\mathcal{L}_{y}^{j_{2}}~g_{j_{1}}\left(Y_{t_{k}}^{i, N}, \rho_{t_{k}}^{Y, N} \right),\Delta t\right),  \notag \\
    &~\quad  \Gamma_{4}\left( \mathcal{L}_{\rho}^{j_{2}}~g_{j_{1}} \left(Y_{t_{k}}^{i,N},\rho_{t_{k}}^{Y,N},Y_{t_{k}}^{i,N}\right) ,\Delta t\right) \Big \rangle~ \mathbb{E}\Big( \Big(\int_{t_{k}}^{t_{k+1}} \int_{t_{k}}^{s} \,\mathrm{d} W^{i,j_{2}}_{r}  \,\mathrm{d} W^{i,j_{1}}_{s}  \Big)^{2}  \Big \arrowvert \mathcal{F}_{t_{k}}\Big) \bigg]  \notag  \\
    \le &~\underbrace{C \mathbb{E} \bigg[\mathds{1}_{\Omega_{\mathcal R ,k}^{i}}\Big(1+\left|Y_{t_{k}}^{i,N}\right|^{\bar{p}+2\gamma+1}+\mathbb{W}_{2}^{\bar{p}+\frac{3}{2}\gamma+2}\Big(\rho_{t_{k}}^{Y,N},\delta_{0}\Big)\Big) \bigg] \Delta t^{2}.}_{I_{1,G}}
\end{align}

Based on the Milstein-type schemes \eqref{eq:modified_Milstein_method}, the elementary inequality, properties of conditional expectation, moment inequalities, Assumption \ref{ass:Gamma-control-conditions}, the growth conditions of $f$, $g$, $\partial_{y}g_{j}$ and $\partial_{\rho}g_{j}$ given in \eqref{ineq:growth-condition-of-f}, \eqref{ineq:growth-condition-of-g}, \eqref{ineq:growth-condition-of-g-y} and \eqref{ineq:growth-condition-of-g-mu}, respectively, as well as Young's inequality, one can obtain
\begin{align} \label{Inter-proce:I2}
     I_{2} 
     \le &~C \sum_{l = 3}^{\bar p} \mathbb{E} \bigg[\mathds{1}_{\Omega_{\mathcal R, k}^{i}} \left|Y_{t_k}^{i, N}\right|^{\bar p - l} \Big|f \Big(Y_{t_{k}}^{i, N}, \rho_{t_{k}}^{Y, N} \Big) \Big|^{l}\bigg] \Delta t^{l}  \notag \\
     &+~C\sum_{l = 3}^{\bar p} \mathbb{E} \bigg[\mathds{1}_{\Omega_{\mathcal R, k}^{i}} \left|Y_{t_k}^{i, N}\right|^{\bar p - l}  \sum_{j_{1}=1}^{m} \Big| g_{j_{1}} \left(Y_{t_{k}}^{i, N}, \rho_{t_{k}}^{Y, N} \right)\Big|^{l}  ~\mathbb{E}\Big(\Big|\Delta W^{i,j_{1}}_{t_{k}}\Big|^{l}  \Big \arrowvert \mathcal{F}_{t_{k}}\Big) \bigg] \notag \\
     &+~C \sum_{l = 3}^{\bar p} \mathbb{E} \bigg[\mathds{1}_{\Omega_{\mathcal R, k}^{i}} \left|Y_{t_k}^{i, N}\right|^{\bar p - l} \sum_{j_{1},j_{2}=1}^{m} \Big|\mathcal{L}_{y}^{j_{2}}~g_{j_{1}}\left(Y_{t_{k}}^{i, N}, \rho_{t_{k}}^{Y, N} \right)\Big|^{l}  ~\mathbb{E} \Big(\Big|\int_{t_{k}}^{t_{k+1}} \int_{t_{k}}^{s} \,\mathrm{d} W^{i,j_{2}}_{r} \, \mathrm{d} W^{i,j_{1}}_{s} \Big|^{l} \Big \arrowvert \mathcal{F}_{t_{k}} \Big)\bigg] \notag \\
     &+~C\sum_{l = 3}^{\bar p} \mathbb{E} \bigg[\mathds{1}_{\Omega_{\mathcal R, k}^{i}} \left|Y_{t_k}^{i, N}\right|^{\bar p - l} \frac{1}{N}\sum_{j_{1},j_{2}=1}^{m} \sum_{k_{1}=1}^{N} \Big| \mathcal{L}_{\rho}^{j_{2}}~g_{j_{1}} \left(Y_{t_{k}}^{i,N},\rho_{t_{k}}^{Y,N},Y_{t_{k}}^{k_{1},N}\right) \Big|^{l}  \notag \\  
     &  \qquad \qquad \qquad \qquad \qquad \qquad \qquad \qquad \mathbb{E} \Big(\Big|\int_{t_{k}}^{t_{k+1}}  \int_{t_{k}}^{s} \,\mathrm{d} W^{k_{1},j_{2}}_{r} \,\mathrm{d} W^{i,j_{1}}_{s} \Big|^{l} \Big \arrowvert \mathcal{F}_{t_{k}}\Big)~\bigg] \notag \\
     \le &~\underbrace{C\sum_{l = 3}^{\bar p} \mathbb{E} \bigg[\mathds{1}_{\Omega_{\mathcal R, k}^{i}}  \Big(1+\left|Y_{t_k}^{i, N}\right|^{\bar{p}+\frac{\gamma}{2}l}+\mathbb{W}_{2}^{\bar{p}}\Big(\rho_{t_{k}}^{Y,N},\delta_{0}\Big)\Big)   \bigg]  \Delta t^{\frac{l}{2}}}_{I_{2,A}}\notag \\
     &+~\underbrace{C\sum_{l = 3}^{\bar p} \mathbb{E} \bigg[\mathds{1}_{\Omega_{\mathcal R, k}^{i}}   \Big(1+\left|Y_{t_k}^{i,N}\right|^{\bar{p}+\gamma l+l}+\frac{1}{N}\sum_{k_{1}=1}^{N}\Big|Y_{t_{k}}^{k_{1},N}\Big|^{\bar{p}+\gamma l+l}+\mathbb{W}_{2}^{\bar{p}+\frac{\gamma}{2} l+l}\Big(\rho_{t_{k}}^{Y,N},\delta_{0}\Big)\Big) \bigg] \Delta t^{l}}_{I_{2,B}}.
 \end{align}
 
Therefore, combining \eqref{Inter-proce:I1}-\eqref{Inter-proce:I2} and using the elementary inequality together with (A2) in Assumption \ref{ass:assumptions_for_MV_coefficients}, we deduce that
\begin{align} \label{Inter-proce:I1+I2}
    I_{1}+I_{2} \le &~  I_{1,A} +I_{1,B} + I_{1,C} + I_{1,D} + I_{1,E} + I_{1,F} + I_{1,G} + I_{2,A} + I_{2,B} \notag \\
    \le &~C\Delta t + C \mathbb{E} \bigg[\mathds{1}_{\Omega_{\mathcal R ,k}^{i}}\left|Y_{t_{k}}^{i,N}\right|^{\bar p }\bigg]\Delta t + C\mathbb{E}\bigg[\mathds{1}_{\Omega_{\mathcal R ,k}^{i}}  \mathbb{W}_{2}^{\bar{p}}\left( \rho_{t_{k}}^{Y, N},\delta_{0}\right) \bigg] \Delta t \notag \\
    &~+ C \mathbb{E} \bigg[\mathds{1}_{\Omega_{\mathcal R ,k}^{i}}\left|Y_{t_{k}}^{i,N}\right|^{\bar p - 1+ r_{2}(\gamma +1)}\bigg]\Delta t^{1+r_{1}} +C\mathbb{E}\bigg[\mathds{1}_{\Omega_{\mathcal R ,k}^{i}}  \mathbb{W}_{2}^{\bar{p}-1+r_{2}}\left( \rho_{t_{k}}^{Y, N},\delta_{0}\right) \bigg] \Delta t^{1+r_{1}} \notag \\
    &~+C\mathbb{E}\bigg[\mathds{1}_{\Omega_{\mathcal R ,k}^{i}}  \mathbb{W}_{2}^{\bar{p}+\frac{3}{2}\gamma +2}\left( \rho_{t_{k}}^{Y, N},\delta_{0}\right) \bigg] \Delta t^{2} +C \sum_{l=2}^{\bar{p}}\mathbb{E} \bigg[\mathds{1}_{\Omega_{\mathcal R ,k}^{i}} \frac{1}{N} \sum_{k_{1}=1}^{N} \left|Y_{t_{k}}^{k_{1},N}\right|^{\bar p +\gamma l+l}\bigg]\Delta t^{l} \notag \\
    &~+ C\sum_{l=2}^{\bar{p}}\mathbb{E}\bigg[\mathds{1}_{\Omega_{\mathcal R ,k}^{i}}\left|Y_{t_{k}}^{i,N}\right|^{\bar{p}+\gamma l+l}\bigg]\Delta t^{l} + C\sum_{l=3}^{\bar{p}}\mathbb{E}\bigg[\mathds{1}_{\Omega_{\mathcal R ,k}^{i}}\mathbb{W}_{2}^{\bar{p}+\frac{\gamma}{2}l+l}\left( \rho_{t_{k}}^{Y, N},\delta_{0}\right)\bigg]\Delta t^{l}\notag \\
    &~ + C\sum_{l=3}^{\bar{p}}\mathbb{E}\bigg[\mathds{1}_{\Omega_{\mathcal R ,k}^{i}}\left|Y_{t_{k}}^{i,N}\right|^{\bar{p}+\frac{\gamma}{2}l}\bigg]\Delta t^{\frac{l}{2}} + C\sum_{l=3}^{\bar{p}}\mathbb{E}\bigg[\mathds{1}_{\Omega_{\mathcal R ,k}^{i}}\mathbb{W}_{2}^{\bar{p}}\left( \rho_{t_{k}}^{Y, N},\delta_{0}\right)\bigg]\Delta t^{\frac{l}{2}} . 
\end{align}
It follows from \cite[Lemma 2.3]{goncalo2019freidlin}, together with an application of Jensen's inequality, that for any $q \geq 2$,
\begin{align} \label{eq:W_2^p}
\mathbb{W}_2^{q}\left(\rho_{t_k}^{Y, N}, \delta_0\right)=\mathbb{W}_2^{q}\bigg(\frac{1}{N} \sum_{i=1}^N \delta_{Y_{t_{k}}^{i, N},} \delta_0\bigg) \le \frac{1}{N} \sum_{i=1}^N\left|Y_{t_{k}}^{i, N}\right|^{q}.
\end{align}
Upon successive substitution of \eqref{eq:W_2^p} into \eqref{Inter-proce:I1+I2} and then into \eqref{Inter-proce:I}, we conclude that
\begin{align}
    &\mathbb{E} \bigg[\mathds{1}_{\Omega_{\mathcal{R}, k+1}^{i}} \left\vert Y_{t_{k+1}}^{i, N} \right\vert^{\bar p} \bigg] \le \mathbb{E} \left[\mathds{1}_{\Omega_{\mathcal R, k}^{i}} \left|Y_{t_{k+1}}^{i, N}\right|^{\bar p}\right]  \notag \\ 
    \le &~C\Delta t + \left(1+C\Delta t\right) \sup_{i\in\mathcal{I}_{N}}\mathbb{E} \bigg[\mathds{1}_{\Omega_{\mathcal R ,k}^{i}}\left|Y_{t_{k}}^{i,N}\right|^{\bar p} \bigg] + C \sup_{i\in \mathcal{I}_{N}} \mathbb{E} \bigg[\mathds{1}_{\Omega_{\mathcal R ,k}^{i}}\left|Y_{t_{k}}^{i,N}\right|^{\bar p - 1+ r_{2}(\gamma +1)} \bigg]\Delta t^{1+r_{1}}\notag \\
    &~ +C \sum_{l=3}^{\bar{p}} \sup_{i\in\mathcal{I}_{N}}\mathbb{E} \bigg[\mathds{1}_{\Omega_{\mathcal R, k}^{i}} \left|Y_{t_{k}}^{i,N}\right|^{\bar{p}+\frac{\gamma}{2} l}\bigg] \Delta t^{\frac{l}{2}}   + C \sum_{l = 2}^{\bar p} \sup_{i\in\mathcal{I}_{N}} \mathbb{E} \left[\mathds{1}_{\Omega_{\mathcal R, k}^{i}} \left|Y_{t_k}^{i, N}\right|^{\bar p +\gamma l+ l} \right] \Delta t^{l}.
\end{align}
With the choice $\mathcal{R}=\mathcal {R}(\Delta t) = \Delta t^{-1/\Theta(\gamma,r_{1},r_{2})}$, where $\Theta(\gamma,r_{1},r_{2})=\frac{r_{2}(\gamma+1)-1}{r_{1}}\vee 3\gamma$, we deduce that
\begin{align*}
    &\mathds{1}_{\Omega_{\mathcal R ,k}^{i}}\left|Y_{t_{k}}^{i,N}\right|^{\bar p - 1+ r_{2}(\gamma +1)} \Delta t^{1+r_{1}} = \mathds{1}_{\Omega_{\mathcal R ,k}^{i}}\left|Y_{t_{k}}^{i,N}\right|^{\bar p}\Delta t \Big(\mathds{1}_{\Omega_{\mathcal R ,k}^{i}}\left|Y_{t_{k}}^{i,N}\right|^{r_{2}(\gamma+1)-1}\Delta t ^{r_{1}}\Big) \le C \mathds{1}_{\Omega_{\mathcal R ,k}^{i}}\left|Y_{t_{k}}^{i,N}\right|^{\bar p}\Delta t,  \\
    &\mathds{1}_{\Omega_{\mathcal R, k}^{i}} \left|Y_{t_{k}}^{i,N}\right|^{\bar{p}+\frac{\gamma}{2} l} \Delta t^{\frac{l}{2}} = \mathds{1}_{\Omega_{\mathcal R, k}^{i}} \left|Y_{t_{k}}^{i,N}\right|^{\bar{p}}\Delta t \Big(\mathds{1}_{\Omega_{\mathcal R, k}^{i}} \left|Y_{t_{k}}^{i,N}\right|^{\frac{\gamma}{2}l}\Delta t^{\frac{l}{2}-1}\Big) \le C \mathds{1}_{\Omega_{\mathcal R ,k}^{i}}\left|Y_{t_{k}}^{i,N}\right|^{\bar p}\Delta t\;,l=3,\cdots,\bar{p}, \\
    &\mathds{1}_{\Omega_{\mathcal R, k}^{i}} \left|Y_{t_k}^{i, N}\right|^{\bar p +\gamma l+ l} \Delta t^{l} = \mathds{1}_{\Omega_{\mathcal R, k}^{i}} \left|Y_{t_k}^{i, N}\right|^{\bar p} \Delta t \Big(\mathds{1}_{\Omega_{\mathcal R, k}^{i}} \left|Y_{t_k}^{i, N}\right|^{l(\gamma+1)}\Delta t^{l-1}\Big) \le  C \mathds{1}_{\Omega_{\mathcal R ,k}^{i}}\left|Y_{t_{k}}^{i,N}\right|^{\bar p}\Delta t\;,l=2,\cdots,\bar{p}.
\end{align*}
From the above estimations, it follows that
\begin{align*}
    \sup_{i\in \mathcal{I}_{N}} \mathbb{E}\left[\mathds{1}_{\Omega^{i}_{\mathcal{R},k+1}}|Y_{t_{k+1}}^{i,N}|^{\bar{p}}\right] \le ~\sup_{i\in \mathcal{I}_{N}} \mathbb{E}\left[\mathds{1}_{\Omega^{i}_{\mathcal{R},k}}|Y_{t_{k+1}}^{i,N}|^{\bar{p}}\right] \le ~  C \Delta t +  (1+ C \Delta t) \sup_{i\in \mathcal{I}_{N}} \mathbb{E} \left[\mathds{1}_{\Omega^{i}_{\mathcal{R},k}}|Y_{t_{k}}^{i,N}|^{\bar{p}}\right].
\end{align*}
By recursion, we obtain
\begin{align} \label{ineq:Moments-bounded-of-sets}
    \sup_{i\in \mathcal{I}_{N}} \mathbb{E}\left[\mathds{1}_{\Omega^{i}_{\mathcal{R},k+1}}|Y_{t_{k+1}}^{i,N}|^{\bar{p}}\right] \le &~  C \left(1+\mathbb{E}\left[|Y_{0}|^{\bar{p}}\right]\right).
\end{align}

In what follows, we concentrate on estimating $\mathbb{E}\left[\mathds{1}_{\Omega_{\mathcal{R},k}^{i,c}}\left|Y_{t_{k}}^{i,N}\right|^{p}\right]$. To this end, we return to the Milstein schemes \eqref{eq:modified_Milstein_method}, from which we deduce that
\begin{align*} 
   \bigg|Y_{t_{k+1}}^{i, N}\bigg| \le & \bigg|Y_{0}^{i,N}\bigg| + \sum_{l=0}^{k}\bigg|\Gamma_{1}\left(f \left(Y_{t_{l}}^{i, N}, \rho_{t_{l}}^{Y, N} \right),\Delta t \right) \Delta t\bigg|  + \sum_{l=0}^{k}\bigg|\sum_{j_{1}=1}^{m} \Gamma_{2}\left(g_{j_{1}} \left(Y_{t_{l}}^{i, N}, \rho_{t_{l}}^{Y, N} \right), \Delta t\right) \Delta W^{i,j_{1}}_{t_{l}}\bigg|    \notag \\
  &~+\sum_{l=0}^{k} \bigg|\sum_{j_{1},j_{2}=1}^{m} \Gamma_{3}\left(\mathcal{L}_{y}^{j_{2}}~g_{j_{1}}\left(Y_{t_{l}}^{i, N}, \rho_{t_{l}}^{Y, N} \right),\Delta t\right) \int_{t_{l}}^{t_{l+1}} \int_{t_{l}}^{s} \,\mathrm{d} W^{i,j_{2}}_{r} \,\mathrm{d} W^{i,j_{1}}_{s}  \bigg| \notag \\
  &~+\sum_{l=0}^{k}\bigg|\frac{1}{N}\sum_{j_{1},j_{2}=1}^{m} \sum_{k_{1}=1}^{N} \Gamma_{4}\left( \mathcal{L}_{\rho}^{j_{2}}~g_{j_{1}} \Big(Y_{t_{l}}^{i,N},\rho_{t_{l}}^{Y,N},Y_{t_{l}}^{k_{1},N}\right) ,\Delta t\Big)\int_{t_{l}}^{t_{l+1}}  \int_{t_{l}}^{s} \,\mathrm{d} W^{k_{1},j_{2}}_{r}\,\mathrm{d} W^{i,j_{1}}_{s} \bigg|.
\end{align*}
Applying the elementary inequality $\Big(\sum\limits_{l=0}^{k}a_{l}\Big)^{\bar{p}}\le (k+1)^{\bar{p}-1}\sum\limits_{l=0}^{k}a_{l}^{\bar{p}},~a_{l}>0$, properties of conditional expectation, together with Assumption \ref{ass:Gamma-control-conditions} and moment inequalities, we have
\begin{align*} 
\mathbb{E}\Big[\Big|Y_{t_{k+1}}^{i,N}\Big|^{\bar{p}} \Big]
  \le &~C\mathbb{E}\Big[\Big|Y_{0}^{i,N}\Big|^{\bar{p}}\Big] + C\Delta t^{-\bar{p}\zeta_{1}} + C \Delta t^{-\bar{p}\left(\zeta_{2}+\frac{1}{2}\right)}+C\Delta t^{-\bar{p}\zeta_{3}} + C\Delta t^{-\bar{p}\zeta_{4}}   \notag \\
  \le &~C\mathbb{E}\Big[\Big|Y_{0}^{i,N}\Big|^{\bar{p}}\Big]+C\Delta t^{-\bar{p}\bar{\zeta}},
\end{align*}
where $\bar{\zeta} = \zeta_{1} \vee \left(\zeta_{2}+\frac{1}{2}\right) \vee \zeta_{3} \vee \zeta_{4}$. Therefore, we can conclude that
\begin{align} \label{ineq:high order moment of numerical solution}
  \mathbb{E}\Big[\Big|Y_{t_{k}}^{i,N}\Big|^{\bar{p}} \Big]
  \le ~C\mathbb{E}\Big[\Big|Y_{0}^{i,N}\Big|^{\bar{p}}\Big]+C\Delta t^{-\bar{p}\bar{\zeta}}.
\end{align}
Moreover, we observe that
\begin{align*}
    \mathds{1}_{\Omega_{\mathcal{R},k}^{i,c}}=&~1-\mathds{1}_{\Omega^{i}_{\mathcal{R},k}}=1-\mathds{1}_{\Omega^{i}_{\mathcal{R},k-1}}\mathds{1}_{\big|Y_{t_{k}}^{i,N}\big|\le \mathcal{R}} = \mathds{1}_{\Omega_{\mathcal{R},k-1}^{i,c}}+\mathds{1}_{\Omega^{i}_{\mathcal{R},k-1}}\mathds{1}_{\left|Y_{t_{k}}^{i,N}\right|> \mathcal{R}} 
    =\sum_{l=0}^{k}\mathds{1}_{\Omega^{i}_{\mathcal{R},l-1}}\mathds{1}_{\big|Y_{t_{l}}^{i,N}\big|> \mathcal{R}},
\end{align*}
where $\mathds{1}_{\Omega^{i}_{\mathcal{R},-1}}=1$. Based on the previous derivation, and applying $\rm H\ddot{o}lder$'s inequality (with $\frac{1}{p'}+\frac{1}{q'}=1$) together with Chebyshev's inequality, it follows that for $p \geq 2$, 
\begin{align}   \label{ineq:Moment-Bounded-Estimation-of-Set-Complements}
\mathbb{E}\Big[\mathds{1}_{\Omega_{\mathcal{R},k}^{i,c}}\left|Y_{t_{k}}^{i,N}\right|^{p}\Big]
    = & \sum_{l=0}^{k} \mathbb{E} \Big[\mathds{1}_{\Omega^{i}_{\mathcal{R},l-1}}\mathds{1}_{|Y_{t_{l}}^{i,N}|> \mathcal{R}} \left|Y_{t_{k}}^{i,N}\right|^{p}\Big] \notag \\
    \le &~\sum_{l=0}^{k} \Big(\mathbb{E}\Big[\left|Y_{t_{k}}^{i,N}\right|^{p\cdot p'}\Big]\Big)^{\frac{1}{p'}}\Big(\mathbb{E}\Big[\mathds{1}_{\Omega^{i}_{\mathcal{R},l-1}}\mathds{1}_{|Y_{t_{l}}^{i,N}|> \mathcal{R}}\Big]\Big)^{\frac{1}{q'}}  \notag \\
    \le &~\Big(\mathbb{E}\Big[\left|Y_{t_{k}}^{i,N}\right|^{p\cdot p'}\Big]\Big)^{\frac{1}{p'}} \sum_{l=0}^{k} \frac{\Big(\mathbb{E}\Big[\mathds{1}_{\Omega^{i}_{\mathcal{R},l-1}}\left|Y_{t_{l}}^{i,N}\right|^{\bar{p}}\Big]\Big)^{\frac{1}{q'}}}{(\mathcal{R})^{\bar{p}/q'}},
\end{align}
where $q'=\frac{\bar{p}}{ (\bar{\zeta}p+1)\Theta} >1$, as $p \leq \frac{\bar{p}-\Theta}{1+ \bar{\zeta}\Theta}$.

Since $p \leq \frac{\bar{p}-\Theta}{1+ \bar{\zeta}\Theta}$, we deduce that $pp'\le \bar{p}$. Applying $\mathrm{H\ddot{o}lder}$'s inequality and \eqref{ineq:high order moment of numerical solution}, we arrive at
\begin{align} \label{ineq:PP'-th moment of Y_{tn}}
\Big(\mathbb{E}\Big[\big|Y_{t_{k}}^{i,N}\big|^{p\cdot p'}\Big]\Big)^{\frac{1}{p'}} \le \Big(\mathbb{E}\Big[\big|Y_{t_{k}}^{i,N}\big|^{p\cdot p'\cdot \frac{\bar{p}}{pp'}}\Big]\Big)^{\frac{pp'}{\bar{p}}\frac{1}{p'}}
  \le C\Big(1+\mathbb{E}\Big[\big|Y_{0}\big|^{\bar{p}}\Big]\Big)^{\frac{p}{\bar{p}}} + C\Delta t^{-\bar{\zeta}p}.
\end{align}
With $\mathcal{R}(\Delta t) = \Delta t^{-1/\Theta(\gamma,r_{1},r_{2})}$, substituting of \eqref{ineq:PP'-th moment of Y_{tn}} and \eqref{ineq:Moments-bounded-of-sets} into \eqref{ineq:Moment-Bounded-Estimation-of-Set-Complements}  yields
\begin{align}  \label{ineq:Bounded-moment-estimates-of-complements}
    \mathbb{E}\Big[\mathds{1}_{\Omega_{\mathcal{R},k}^{i,c}}\big|Y_{t_{k}}^{i,N}\big|^{p}\Big] 
    \le &~C(k+1)\Delta t^{\bar{\zeta} p+1}\Big(C\Delta t^{-\bar{\zeta} p}+C\Big(1+\mathbb{E}\Big[\big|Y_{0}\big|^{\bar{p}}\Big]\Big)^{\frac{p}{\bar{p}}}\Big)\cdot \Big[C\Big(1+\mathbb{E}\Big[\big|Y_{0}\big|^{\bar{p}}\Big]\Big)\Big]^{\frac{1}{q'}} \notag \\
    \le&~ C\Big(1+\mathbb{E}\Big[\big|Y_{0}\big|^{\bar{p}}\Big]\Big)^{\frac{p}{\bar{p}}+\frac{1}{q'}}.
\end{align}
Combining $\rm H\ddot{o}lder$'s inequality, \eqref{ineq:Moments-bounded-of-sets} and \eqref{ineq:Bounded-moment-estimates-of-complements}, we show
\begin{align*}
    \sup_{i\in\mathcal{I}_{N}}\mathbb{E}\Big[\big|Y_{t_{k}}^{i,N}\big|^{p}\Big] 
    =&~\sup_{i\in\mathcal{I}_{N}}\mathbb{E}\Big[\mathds{1}_{\Omega_{\mathcal{R},k}}\big|Y_{t_{k}}^{i,N}\big|^{p}\Big]+\sup_{i\in\mathcal{I}_{N}}\mathbb{E}\Big[\mathds{1}_{\Omega_{\mathcal{R},k}^{c}}\big|Y_{t_{k}}^{i,N}\big|^{p}\Big]  \\
    \le &~\Big(\sup_{i\in\mathcal{I}_{N}}\mathbb{E}\Big[\mathds{1}_{\Omega_{\mathcal{R},k}}\big|Y_{t_{k}}^{i,N}\big|^{\bar{p}}\,\Big]\Big)^{\frac{p}{\bar{p}}}+\sup_{i\in\mathcal{I}_{N}}\mathbb{E}\Big[\mathds{1}_{\Omega_{\mathcal{R},k}^{c}}\big|Y_{t_{k}}^{i,N}\big|^{p}\Big]   \\
    \le &~C\Big(1+\Big(\mathbb{E}\Big[\big|Y_{0}\big|^{\bar{p} }\Big]\Big)^{\beta}\,\Big),
\end{align*}
where $\beta >0 $. Then, using $\rm H\ddot{o}lder$'s inequality, the estimate \eqref{esti:moment_boundness} can be extended to all $p\in [2, \frac{\bar{p}-\Theta}{1+ \bar{\zeta}\Theta}]$, including odd and non-integer values.
\end{proof}

%%%%%%%%%%%%%%%%%%%%%%%%%%%%%%%%%%%%%%%%%%%%%%%%%%%%%

\vspace{-1.5em}
%%%%%%%%%%%%%%%%%%%%%%%%%%%%%%%%%%%%%%%%%%%%%%%%%%%%%
%\iffalse
\section{A summary of key parameters}
\label{appendix:summary-of-parameters}
\vspace{-1.2em}
\begin{table}[H]
\centering
\renewcommand{\arraystretch}{1.3}
\scalebox{0.9}{
\begin{tabular}{|c|c|c|p{8cm}|}
\hline
\textbf{Parameter} & \textbf{Type / Range} & \textbf{Places} & \textbf{Comments} \\
\hline
$\bar{p}$ & enough large   & Assumption \ref{ass:assumptions_for_MV_coefficients}  & This parameter facilitates high-order moment estimation and determines the initial moment exponent;  \\
\hline
$p^{*}$ & $>1$ & Assumption \ref{ass:assumptions_for_MV_coefficients}  & A parameter introduced in the coupled monotonicity condition and used in the convergence analysis; \\
\hline
$\gamma$ & $\ge 2$ & Assumption \ref{ass:polynomial-growth-of-f} &The polynomial growth order of the drift coefficient; \\
\hline  
$\zeta_{l},l=1,2,3,4$ &  $>0$ & Assumption \ref{ass:Gamma-control-conditions}  &Negative powers of $\Delta t$ used to control the mappings in the numerical scheme;  \\
\hline
$r_{1},r_{2}$ & $>0$  & Assumption \ref{ass:Coefficient-comparison-conditions-of-f} & Exponents of $\Delta t$ used to control the difference between $\Gamma_1$ and $f$ ;  \\
\hline
$\Theta$ & $\frac{r_{2}(\gamma+1)-1}{r_{1}}\vee 3\gamma$ & Lemma \ref{lem:moment-bounds-of-numerical-solution} & A parameter used to control the boundedness of the numerical solution on shrinking subsets;  \\
\hline 
$\beta$ & $>0$ & Lemma \ref{lem:moment-bounds-of-numerical-solution} & The exponent $\beta$ of the $\bar{p}$-th moment of the initial value; \\
\hline
$\bar{\zeta}$ & $\zeta_{1}\vee(\zeta_{2}+\frac{1}{2})\vee\zeta_{3}\vee\zeta_{4}$ &  Lemma \ref{lem:moment-bounds-of-numerical-solution} & The choice of larger values for the parameters $\zeta_{i}$; \\
\hline
$\delta_{l},\gamma_{l},~l=1,2,3,4$ & $\delta_{1},\delta_{2},\gamma_{l}\ge 1,~\delta_{3},\delta_{4}\ge \frac{1}{2}$  & Assumption \ref{ass:Coefficient-comparison-conditions-of-Gamma1-Gamma4}  & Parameters controlling the difference between a function and its mapping;  \\
\hline
$q$ & $\ge2$ & Lemma \ref{lem:The-difference-between-two-numerical-solutions}, \ref{lem:V_estimate}, \ref{lem:mathcal_V_estimate} & A free parameter greater than 2 ; \\
\hline
$\hat{p},~\tilde{p}$ & $\ge2$ &Lemma \ref{lem:g_and_Gamma_g_difference} & Admissible upper bounds for the moments of the numerical solution. \\
\hline
\end{tabular}
}
\caption{Summary of parameters}
\label{tab:parameter-overview}
\end{table}
%\fi
%%%%%%%%%%%%%%%%%%%%%%%%%%%%%%%%%%%%%%%%%%%%%%%%%%%%%%%%%%%
\end{document}